\newtheorem{main}{Theorem}
\newtheorem{thm}{Theorem}[section]
\newtheorem*{thm*}{Theorem}
\newtheorem{lem}[thm]{Lemma}
\newtheorem{fact}[thm]{Fact}
\newtheorem{prop}[thm]{Proposition}
\newtheorem*{prop*}{Proposition}
\newtheorem{conj}[thm]{Conjecture}
\newtheorem{cor}[thm]{Corollary}
\newtheorem*{cor*}{Corollary}
\theoremstyle{definition}
\newtheorem{defn}[thm]{Definition}
\newtheorem*{defn*}{Definition}
\newtheorem{remark}[thm]{Remark}
\newtheorem{question}[thm]{Question}
\newtheorem*{question*}{Question}
\newtheorem*{conv*}{Convention}
\def\bb{\mathbb}
\def\bb{\mathbb}
\def\cal{\mathcal}
\def\cM{\mathcal{M}}
\def\dotminussym#1#2{%
  \setbox0=\hbox{$\m@th#1-$}%
  \kern.5\wd0%
  \hbox to 0pt{\hss\hbox{$\m@th#1-$}\hss}%
  \raise.6\ht0\hbox to 0pt{\hss$\m@th#1.$\hss}%
  \kern.5\wd0}
\newcommand\cU{{\cal U}}
\newcommand{\cN}{{\cal N}}
\newcommand\bN{{\mathbb N}}
\newcommand\bR{{\mathbb R}}
\newcommand\bC{{\mathbb C}}
\newcommand{\F}{\mathbb F}
\def \Th{\operatorname{Th}}
\def \cR{\mathcal R}
\def \cU{\mathcal U}
\def \C{\mathbb C}
\newcommand{\fo}{\text{fo}}
\title{On the first-order free group factor alternative}
\author[Goldbring and Pi]{Isaac Goldbring and Jennifer Pi}
\address{Department of Mathematics\\University of California, Irvine, 340 Rowland Hall (Bldg.\# 400),
Irvine, CA 92697-3875}
\email{isaac@math.uci.edu, jspi@uci.edu}
\urladdr{http://www.math.uci.edu/~isaac, \, https://sites.uci.edu/jpi314/}
\begin{document}

\begin{abstract}
    We investigate the problem of elementary equivalence of the free group factors, that is, do all free group factors $L(\F_n)$ share a common first-order theory?
    We establish a trichotomy of possibilities for their common first-order fundamental group, as well as several possible avenues for establishing a dichotomy in direct analog to the free group factor alternative of Dykema and R\v adulescu. 
    We also show that the $\forall \exists$-theories of the interpolated free group factors are increasing, and use this to establish that the dichotomy holds on the level of $\forall \exists$-theories.  We conclude with some observations on related problems.
    % observations regarding the possible coincidence of the $\forall\exists$-theories of free group factors and matrix ultraproducts, the version of the main question for reduced group C*-algebras of the free groups, and potential attacks on the question of whether or not taking group von Neumann algebras preserves elementary equivalence.
\end{abstract}
\maketitle

\section{Introduction}

A basic construction in the theory of tracial von Neumann algebras associates to every (countable, discrete) group $\Gamma$ its \textbf{group von Neumann algebra} $L(\Gamma)$.  More precisely, if one lets $\ell^2(\Gamma)$ denote the Hilbert space with orthonormal basis $(u_\gamma)_{\gamma\in \Gamma}$, then the left regular representation $\lambda:\Gamma\to U(\ell^2(\Gamma))$ is the unitary representation of $\Gamma$ defined by $\lambda(\gamma)(u_\eta):=u_{\gamma\eta}$.  $L(\Gamma)$ is then defined to be the von Neumann subalgebra of $\mathcal{B}(\ell^2(\Gamma))$ generated by $\lambda(\Gamma)$.  $L(\Gamma)$ is a tracial von Neumann algebra when equipped with the trace $\tau(x):=\langle xu_1,u_1\rangle$ and is a factor precisely when $\Gamma$ is an ICC group, that is, when all nontrivial conjugacy classes of $\Gamma$ are infinite.

It is well-known that the group von Neumann algebra $L(\Gamma)$ may ``forget'' much of the algebraic information about $\Gamma$.  For example, by a celebrated result of Connes \cite{Connes76}, any two infinite ICC amenable groups generate the same group von Neumann algebra, namely the hyperfinite II$_1$ factor $\mathcal{R}$.  On the other hand, there are situations in which $L(\Gamma)$ ``completely remembers'' $\Gamma$, for example the generalized wreath product groups of \cite{IPV13-Superrigid}.  In fact, a famous conjecture of Connes states that $L(\Gamma)$ completely remembers $\Gamma$ when $\Gamma$ is an ICC group with Kazhdan's property (T) \cite{Connes82}. (For more on Connes' rigidity conjecture, see, for example, the introduction of \cite{IPV13-Superrigid}.)

A long-standing question regarding the free group factors $L(\F_n)$ for $n \geq 2$, originally considered by Murray and von Neumann in \cite{MvN43}, is whether $L(\F_m) \cong L(\F_n)$ for distinct $m,n \geq 2$. 
Dykema and R\v adulescu independently introduced the \textbf{interpolated free group factors} in \cite{Dykema94}, \cite{Rad1995random}.

Using these, they proved a dichotomy for the free group factor isomorphism problem; the following is \cite[Corollary 4.7]{Rad1995random}.

\begin{thm} \label{thm: fgf-alternative}
One of the following two statements must hold.
\begin{enumerate}
       \item $L(\F_r) \cong L(\F_s)$ for all $1 < r \leq s \leq \infty$, and the fundamental group of $L(\F_r)$ is $\bR_+$ for all $1 < r \leq \infty$.
       \item $L(\F_r) \not \cong L(\F_s)$ for all $1 < r < s \leq \infty$, and the fundamental group of $L(\F_r)$ is $\{1\}$ for all $1 < r < \infty$.
   \end{enumerate}
\end{thm}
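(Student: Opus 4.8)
The proof rests on two structural facts about the interpolated free group factors, which I regard as the analytic heart of the matter and would take as input (they are the free-probabilistic results of Dykema and R\^adulescu): \emph{free additivity}, $L(\F_r) * L(\F_s) \cong L(\F_{r+s})$ for all $r,s \in (1,\infty]$, and the \emph{compression/amplification formula}, $L(\F_r)^t \cong L(\F_{1 + t^{-2}(r-1)})$ for all $r \in (1,\infty]$ and $t \in (0,\infty)$, with the convention $\infty \cdot t^{-2} = \infty$. Note that $r = \infty$ in the second formula gives $L(\F_\infty)^t \cong L(\F_\infty)$ for every $t$, so $\mathcal{F}(L(\F_\infty)) = \bR_+$ unconditionally; this is the reason $r = \infty$ is (correctly) omitted from alternative (2).

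The dichotomy is governed by whether or not $L(\F_r) \cong L(\F_s)$ for some $1 < r < s \le \infty$. \emph{Easy direction:} if no such collapse occurs, fix $r \in (1,\infty)$ and let $t \in \mathcal{F}(L(\F_r))$; by the compression formula $L(\F_r) \cong L(\F_r)^t \cong L(\F_{1+t^{-2}(r-1)})$, and unless $t = 1$ the parameter $1 + t^{-2}(r-1)$ is a real number in $(1,\infty)$ different from $r$ (it equals $r$ only when $t^{-2}=1$), contradicting the assumption. Hence $\mathcal{F}(L(\F_r)) = \{1\}$, which together with the standing hypothesis is exactly alternative (2).

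\emph{Hard direction:} suppose $L(\F_r) \cong L(\F_s)$ for some $1 < r < s \le \infty$. If $s = \infty$, then for any $r' \in (1,\infty)$, choosing $t = \sqrt{(r-1)/(r'-1)}$ gives $L(\F_{r'}) \cong L(\F_r)^t \cong L(\F_\infty)^t = L(\F_\infty)$, so all the factors coincide and $\mathcal{F}(L(\F_{r'})) = \mathcal{F}(L(\F_\infty)) = \bR_+$; this is alternative (1). Now assume $s < \infty$ and put $\lambda := (s-1)/(r-1) > 1$. Compressing the isomorphism $L(\F_r) \cong L(\F_s)$ by $t = \sqrt{(r-1)/(r'-1)}$ yields the ``scaling'' isomorphism $L(\F_{r'}) \cong L(\F_{1+\lambda(r'-1)})$ for every $r' \in (1,\infty)$. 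Free-multiplying this by an $L(\F_b)$ and applying free additivity produces the ``translation'' isomorphism $L(\F_v) \cong L(\F_{v+\delta})$, valid for every $v > 2$ and every $\delta \in (0,(\lambda-1)(v-2))$. The admissible step sizes thus fill out $(0,(\lambda-1)(v-2))$, which is nonempty for $v>2$ and opens up as $v$ grows, so a finite chain of translation isomorphisms of sufficiently small common step size connects any two parameters in $(2,\infty)$; hence $L(\F_v)$ is one and the same factor $P$ for all $v \in (2,\infty)$. Finally $P \cong L(\F_6)$, so the compression formula gives $P^t \cong L(\F_{1+5t^{-2}})$ with $1+5t^{-2}$ sweeping out all of $(1,\infty)$ as $t$ ranges over $(0,\infty)$; this simultaneously shows $L(\F_w) \cong P$ for every $w \in (1,\infty)$ and, by comparing two values of $t$ whose compressions land above $2$, that $\mathcal{F}(P) = \bR_+$, whence $\mathcal{F}(L(\F_w)) = \bR_+$ for all finite $w$.

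The one remaining point — and the genuine obstacle — is showing that in the collapsing case the finite factors are also isomorphic to $L(\F_\infty)$ itself (the assertion $\mathcal{F}(L(\F_\infty)) = \bR_+$ being automatic). Since $L(\F_\infty)$ is the free product of countably many copies of $L(\F_2) \cong P$ and $P * P \cong L(\F_6) \cong P$, what is needed is an inductive-limit (absorption) argument identifying this infinite free product with $P$; controlling the direct system $P \hookrightarrow P*P \hookrightarrow P*P*P \hookrightarrow \cdots$ well enough to conclude that its closure is again $P$ is precisely the step at which Dykema's original treatment stopped, and where R\^adulescu's additional input is required. I expect this to be the hardest part of the write-up; everything else is a formal consequence of the two displayed formulas.
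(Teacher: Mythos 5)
Your derivation of the finite-parameter part is correct and is essentially the standard (Dykema-style) argument: the scaling isomorphism $L(\F_{r'})\cong L(\F_{1+\lambda(r'-1)})$ obtained by amplifying the assumed isomorphism, the translation isomorphisms $L(\F_v)\cong L(\F_{v+\delta})$ for $\delta\in(0,(\lambda-1)(v-2))$ via free products, the chaining argument identifying all $L(\F_v)$ for $v\in(2,\infty)$, and then amplifying $L(\F_6)$ to sweep out $(1,\infty)$ and to see that the common fundamental group is all of $\bR_+$ --- all of these steps check out, granting the two input facts (recorded in the paper as Fact \ref{interpfact}(2),(3)). The easy direction and the case of a collapse with $s=\infty$ are also fine. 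Note, for comparison, that the paper does not prove this theorem at all: it quotes it as R\^adulescu's \cite[Corollary 4.7]{Rad1995random}, remarking explicitly that Dykema's independent version omitted ``the subtler case of $s=\infty$.''

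That subtler case is exactly where your proposal has a genuine gap, and it is needed for the theorem as stated. If the collapse occurs between two \emph{finite} parameters, alternative (2) is excluded, so the theorem forces alternative (1), which asserts in particular that $L(\F_r)\cong L(\F_\infty)$ for finite $r$; your argument never establishes this, and you acknowledge as much. What you have proved is therefore only the weaker dichotomy for $r,s\in(1,\infty)$. Moreover, this last step is not a routine ``write-up'' matter that follows formally from free additivity and the compression formula: $L(\F_\infty)$ is the weak closure of the increasing union $\bigcup_n L(\F_n)$, and an increasing union of copies of a fixed II$_1$ factor $P$ need not be isomorphic to $P$ (the connecting embeddings matter), so identifying this union with $P$ in the collapsing case requires new input --- R\^adulescu's argument, or Dykema's standard-embedding machinery (standardness of the natural inclusions, stability under composition with isomorphisms, and the inductive-limit fact cited in this paper as \cite[Proposition 4.3(ii), 4.4(i)]{Dykema93FreeDim}). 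Indeed, even in the weaker setting of elementary equivalence, the paper needs precisely that machinery (its Lemma \ref{lemma: induced standard emb} and the theorem following it, together with Lemma \ref{infinity}) to bring $L(\F_\infty)$ into the picture, and its Theorem \ref{thm: trichotomy} deliberately excludes $L(\F_\infty)$ for this very reason. So your identification of the obstruction is accurate, but as a proof of the stated theorem the proposal stops short: the inclusion of $s=\infty$ in alternative (1) remains unproven.
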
 

In this paper, we investigate the problem of free group factor elementary equivalence:
\begin{question}\label{foeequestion}
    Is $L(\F_m) \equiv L(\F_n)$ for some (or all) distinct $m, n\geq 2$?
\end{question}

There are two natural motivations for the previous problem.  First, in many circumstances, when two II$_1$ factors are not isomorphic, they are not even elementarily equivalent.  (See, for example, \cite[Section 4]{GoldbringHart23}.)   Thus, if all free group factors have the same first-order theory, then this might serve as evidence as to why the free group factor isomorphism problem is so difficult.  Of course, if some free group factors have distinct first-order theories, then this would provide a strong refutation of the free group factor isomorphism problem.

A second motivation for Question \ref{foeequestion} is Sela's famous positive resolution \cite[Theorem 3]{Sela-FGF-EE} of the Tarski problem, which asked if all nonabelian free groups have the same first-order theory.  (For this reason, Thomas Sinclair once called Question \ref{foeequestion} the \textbf{noncommutative Tarski problem}.)  A na\"ive attempt toward resolving Question \ref{foeequestion} might be to somehow ``transfer'' the fact that $\bb F_2\equiv \bb F_3$ holds to prove that $L(\bb F_2)\equiv L(\bb F_3)$ is also true.  However, a quick examination of the construction of the group von Neumann algebra as well as the first-order language used to study tracial von Neumann algebras shows that such a transferral of elementary equivalence is not automatic.  In fact, it is natural to ask whether or not there are examples of ICC groups $\Gamma_1$ and $\Gamma_2$ that are elementarily equivalent but whose group von Neumann algebras $L(\Gamma_1)$ and $L(\Gamma_2)$ are not elementarily equivalent.  The first author recently constructed such an example of a pair of II$_1$ factors \cite{GoldbringNonUnifInnerAmenable}.  We note that, by strengthening the assumption by assuming that $\Gamma_1$ and $\Gamma_2$ are \emph{back-and-forth equivalent}, one can indeed conclude that $L(\Gamma_1)$ and $L(\Gamma_2)$ are elementarily equivalent \cite{GHT2023backandforth}.

Our initial hope was to establish a dichotomy for the first-order free group factor problem analogous to Theorem \ref{thm: fgf-alternative} above.  Although we are currently unable to obtain this dichotomy, we are able to obtain an interesting trichotomy:

\newpage
\begin{main}
Exactly one of the following holds:
\begin{enumerate}
    \item $L(\F_r) \equiv L(\F_s)$ for all $1 < r \leq s < \infty$, and the first-order fundamental group of $L(\F_r)$ is $\bR_+$ for all $1 < r \leq \infty$.
       \item $L(\F_r) \not \equiv L(\F_s)$ for all $1 < r < s < \infty$, and the first-order fundamental group of $L(\F_r)$ is $\{1\}$ for all $1 < r < \infty$.
       \item There is $\alpha\in (1,\infty)$ such that the first-order fundamental group of $L(\F_r)$ is $\alpha^\bb Z$ for all $1<r < \infty$. 
\end{enumerate}
\end{main}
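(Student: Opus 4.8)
Write $\mathcal{F}_{\fo}(M) := \{t \in \bR_{>0} : M^t \equiv M\}$ for the first-order fundamental group of a $\mathrm{II}_1$ factor $M$. The plan is to prove the sharper statement that $\mathcal{F}_{\fo}(L(\F_r))$ is a \emph{closed} subgroup of $(\bR_{>0},\cdot)$ that does not depend on $r \in (1,\infty)$. Since the closed subgroups of $\bR_{>0} \cong (\bR,+)$ are precisely $\{1\}$, the groups $\alpha^{\bb Z}$ for a unique $\alpha \in (1,\infty)$, and $\bR_{>0}$ itself --- three visibly mutually exclusive possibilities --- this yields the trichotomy, once we record how $\mathcal{F}_{\fo}$ governs elementary equivalence.

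\textbf{Structural preliminaries.} Two facts are used throughout: (i) amplification by a fixed real scalar preserves elementary equivalence, i.e.\ $M \equiv N$ implies $M^a \equiv N^a$ (this reduces, via Keisler--Shelah and the observation that ultrapowers commute with corners of trace bounded away from $0$, to the known fact that $\Th(pMp)$ is determined by $\Th(M)$ and $\tau(p)$); and (ii) Dykema's compression formula $L(\F_r)^a \cong L(\F_{1+(r-1)/a^2})$ \cite{Dykema94, Rad1995random}. From (i) and $M^{ab}=(M^a)^b$ one checks that $\mathcal{F}_{\fo}(M)$ is a subgroup of $\bR_{>0}$. From (i) and (ii): for $r,r' \in (1,\infty)$ pick $a$ with $L(\F_{r'})^a \cong L(\F_r)$; then $t \in \mathcal{F}_{\fo}(L(\F_{r'}))$ forces $L(\F_r)^t \cong (L(\F_{r'})^t)^a \equiv (L(\F_{r'}))^a \cong L(\F_r)$, so $\mathcal{F}_{\fo}(L(\F_r))$ is independent of $r \in (1,\infty)$; call it $\mathcal G$. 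Finally, (ii) also gives $L(\F_r) \equiv L(\F_s) \iff \sqrt{(r-1)/(s-1)} \in \mathcal G$.

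\textbf{The main step: $\mathcal G$ is closed.} Let $h_n \in \mathcal G$ with $h_n \to t$, fix $C \in \bR_{>0}$ larger than $t$ and every $h_n$, and set $N := L(\F_r)^C$. For each $n$, $L(\F_r)^{h_n} \cong p_n N p_n$ with $p_n \in N$ a projection of normalized trace $h_n/C$; since $\inf_n \tau_N(p_n) > 0$, a direct check shows that the tracial ultraproduct of these corners is the corner of the ultrapower, $\prod_{\mathcal U} p_n N p_n \cong p\,N^{\mathcal U}p$ where $p = (p_n)_n$ has trace $t/C$. Combined with $N^{\mathcal U} \equiv N$ and (i), this gives $\prod_{\mathcal U} L(\F_r)^{h_n} \cong (N^{\mathcal U})^{t/C} \equiv N^{t/C} = L(\F_r)^t$. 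But $L(\F_r)^{h_n} \equiv L(\F_r)$ for every $n$, so $\prod_{\mathcal U} L(\F_r)^{h_n} \equiv L(\F_r)$ by \L o\'s's theorem; hence $L(\F_r)^t \equiv L(\F_r)$, i.e.\ $t \in \mathcal G$. (In fact this argument shows the first-order fundamental group of \emph{any} $\mathrm{II}_1$ factor is closed.)

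\textbf{Conclusion and the remaining obstacle.} Now $\mathcal G$ is one of $\{1\}$, $\alpha^{\bb Z}$, $\bR_{>0}$. If $\mathcal G = \bR_{>0}$, then $L(\F_r) \equiv L(\F_s)$ for all finite $r \le s$ and $\mathcal{F}_{\fo}(L(\F_r)) = \bR_{>0}$ for finite $r$, while for $r=\infty$ the first-order fundamental group contains the genuine fundamental group $\bR_{>0}$ (R\u{a}dulescu \cite{Rad1992fundamental}), hence equals it: case (1). If $\mathcal G = \{1\}$, then $L(\F_r) \equiv L(\F_s)$ forces $r=s$: case (2). And $\mathcal G = \alpha^{\bb Z}$ is case (3). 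I expect the technical heart to be the closedness step --- the interplay of corners, ultrapowers and real amplifications --- while the genuinely hard problem, left open and the reason this is only a trichotomy, is to exclude case (3), i.e.\ to show the common first-order fundamental group cannot be a proper infinite cyclic group, which seems to require new model-theoretic invariants sensitive to free dimension.
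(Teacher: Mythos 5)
Your proof is correct and follows essentially the same route as the paper: elementary equivalence is preserved under amplification (via Keisler--Shelah and commuting ultrapowers with compressions), the formula $L(\F_r)_t\cong L(\F_{1+(r-1)/t^2})$ converts elementary equivalences among the $L(\F_r)$ into membership of ratios in a common first-order fundamental group, and closedness of that group (your corner-of-ultrapower argument is exactly the paper's continuity statement $\prod_{\mathcal U}\mathcal M_{t_k}\cong \mathcal M_t^{\mathcal U}$, also known from Goldbring--Hart) plus the classification of closed subgroups of $\mathbb{R}_+$ gives the trichotomy. The only cosmetic difference is that the paper argues via two ``ratios'' generating a dense subgroup rather than naming the common group and classifying closed subgroups directly, which is the same argument in different clothing.
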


Using this trichotomy, we are able to obtain the desired dichotomy if the following conjecture has a positive solution.

\begin{conj}
    If $\cal U$ is an ultrafilter on $\bb N$ and $(r_k)$ is a sequence in $\bb R_{>1}$ for which $\lim_\cal U r_k=\infty$, then $\prod_{\cal U} L(\bb F_{r_k})\cong L(\bb F_\infty)^\cal U$. 
\end{conj}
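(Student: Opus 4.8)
The plan is to manufacture the isomorphism from a pair of trace-preserving embeddings together with a saturation argument, thereby reducing the conjecture to an elementary-equivalence statement, which is where the real difficulty lies. Throughout write $P:=\prod_{\cal U}L(\F_{r_k})$, and note that since $\lim_{\cal U}r_k=\infty$ the ultrafilter $\cal U$ is nonprincipal, so $P$ is $\aleph_1$-saturated of density character at most $2^{\aleph_0}$.

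First I would construct a trace-preserving embedding $\Phi\colon L(\F_\infty)\hookrightarrow P$ as follows. Present $L(\F_\infty)=W^*(s_1,s_2,\dots)$ for a free semicircular family $(s_i)_{i\in\bb N}$, and inside each $L(\F_{r_k})$ (for the $\cal U$-large set of $k$ with $r_k\ge 2$) fix a free semicircular family $s_1^{(k)},\dots,s^{(k)}_{m_k}$ with $m_k:=\lfloor r_k\rfloor-1$; since $\lim_{\cal U}r_k=\infty$ we have $\lim_{\cal U}m_k=\infty$. Putting $\Phi(s_i):=(t_i^{(k)})_{\cal U}$, where $t_i^{(k)}=s_i^{(k)}$ if $m_k\ge i$ and $t_i^{(k)}=0$ otherwise, Łoś's theorem shows $(\Phi(s_i))_{i\in\bb N}$ is again a free semicircular family in $P$, so $\Phi$ extends to a trace-preserving (hence injective, and conditional-expectation-admitting) embedding. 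Dually, by the free-product rule $L(\F_{r_k})*L(\F_\infty)\cong L(\F_\infty)$ each $L(\F_{r_k})$ sits trace-preservingly and with expectation inside a copy of $L(\F_\infty)$; arranging this so that the generators $s_1,\dots,s_{m_k}$ of that copy of $L(\F_{r_k})$ are literally the first $m_k$ generators of the ambient $L(\F_\infty)$, the ultraproduct of these inclusions gives a trace-preserving embedding $\Psi\colon P\hookrightarrow L(\F_\infty)^{\cal U}$ for which $\Psi\circ\Phi$ is exactly the diagonal embedding $L(\F_\infty)\hookrightarrow L(\F_\infty)^{\cal U}$, and hence elementary.

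Both $P$ and $L(\F_\infty)^{\cal U}$ are $\aleph_1$-saturated of density character at most $2^{\aleph_0}$, so — assuming CH, or more carefully by a back-and-forth along iterated ultrapowers (using that $\Phi$ and $\Psi$ induce compatible diagonal embeddings after passing to ultrapowers) — it is enough to prove
\[
P\ \equiv\ L(\F_\infty).
\]
Part of this is accessible. Since $\Psi\circ\Phi$ is elementary, the sandwich $L(\F_\infty)\hookrightarrow P\hookrightarrow L(\F_\infty)^{\cal U}$ and the usual monotonicity of quantifier-free complexity under embeddings force $\Th_\forall(P)=\Th_\forall(L(\F_\infty))$; and since, by Łoś's theorem, $P$ satisfies precisely the sentences holding in $L(\F_{r_k})$ for $\cal U$-almost every $k$, the fact that the $\forall\exists$-theories of the interpolated free group factors form a chain — together with $r_k\to_{\cal U}\infty$ and the behaviour of $\forall\exists$-sentences under increasing unions of models — should pin down $\Th_{\forall\exists}(P)$ as $\Th_{\forall\exists}(L(\F_\infty))$.

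The main obstacle is upgrading agreement at the $\forall\exists$ level to full elementary equivalence. Deducing $P\equiv L(\F_\infty)$ from $\Th_{\forall\exists}(P)=\Th_{\forall\exists}(L(\F_\infty))$ would follow if $\Th(L(\F_\infty))$ were $\forall\exists$-axiomatizable, i.e.\ preserved under unions of chains of models; but this is unknown, and a positive answer would in particular decide whether $L(\F_2)\equiv L(\F_3)$. The difficulty is concrete: a typical element of $P$ is represented by a sequence $(x_k)_{\cal U}$ in which $x_k$ genuinely involves $\approx r_k$ free generators, so it need not lie in $\Phi(L(\F_\infty))$, and to show $\Psi$ is elementary one must certify that the type of such an element over $\Phi(L(\F_\infty))$ is realized, with the correct type, in $L(\F_\infty)^{\cal U}$. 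I expect this step to be the crux of any proof of the conjecture, and to require genuinely new structural information about $\Th(L(\F_\infty))$ — some form of model-completeness, or a quantifier reduction of its theory down to the $\forall\exists$ level — which is of essentially the same depth as the noncommutative Tarski problem itself. Failing that, one should probably read the conjecture modulo CH, or as the weaker assertion $P\equiv L(\F_\infty)$.
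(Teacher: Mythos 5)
The statement you were asked to prove is Conjecture B of the paper, restated in the body as Question \ref{freeultraquestion}: the authors explicitly leave it open, proving only the finite-ultralimit analogue (Proposition \ref{continuity} and Corollary \ref{cor: continuity}) by an amplification argument that degenerates when $\lim_{\cal U}r_k=\infty$, since $L(\F_\infty)$ is not an amplification of any $L(\F_r)$ with $r$ finite. So there is no proof in the paper to compare against, and your proposal --- correctly and candidly --- does not claim to supply one. What you do establish is sound and essentially recovers the paper's Section 4 material by the same mechanism: your sandwich $L(\F_\infty)\hookrightarrow P\hookrightarrow L(\F_\infty)^{\cal U}$ with composition the diagonal exhibits an existential embedding in the paper's sense, and your conclusion $\Th_{\forall\exists}(P)=\Th_{\forall\exists}(L(\F_\infty))$ is the ultraproduct form of Corollary \ref{supcor} combined with the monotonicity in Corollary \ref{generalexistential}. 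One small point to make explicit: the claim that $\Psi\circ\Phi$ is the diagonal requires choosing the copy of $L(\F_{r_k})$ inside $L(\F_\infty)$ compatibly with the chosen semicircular family; this is available because $L(\F_{r_k})\cong L(\F_{m_k})*L(\F_{r_k-m_k})$ and $L(\F_{r_k-m_k})*L(\F_\infty)\cong L(\F_\infty)$ by Fact \ref{interpfact}(2), but it should be said.

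The genuine gap is exactly where you locate it, with one additional caveat. Even granting $P\equiv L(\F_\infty)$, the conjecture as stated asks for an isomorphism with the \emph{same} ultrafilter $\cal U$ on both sides; without CH, countable saturation plus elementary equivalence does not deliver this, and Keisler--Shelah only produces isomorphic further ultrapowers. So your reduction is really to the conjunction of the elementary-equivalence statement with either CH or an explicit construction in the style of Proposition \ref{continuity}. As for the elementary-equivalence step itself, the paper's own framing confirms it is not accessible by soft means: a positive answer to Question \ref{freeultraquestion}, together with $\cal F\neq\{1\}$, forces $L(\F_r)\equiv L(\F_\infty)$ for all finite $r$ (the lemma following Lemma \ref{infinity}), which would resolve a large part of the main open problem of the paper. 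Your assessment that this is of comparable depth to the noncommutative Tarski problem is therefore the right one; the proposal should be read as a correct reduction and partial result, not a proof.
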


Note that this fact holds for sequences $r_k$ whose ultralimit is some $r \in \bb R_{> 1}$ (see Corollary \ref{cor: continuity}). This conjecture is also a question of independent interest, since it establishes that the theories of the interpolated free group factors varies continuously in a way that includes $L(\F_\infty)$.

Separately from the trichotomy, we give a few possible avenues toward establishing variants of the dichotomy using results on the preservation of first-order theory under free products. In particular, we show

\begin{enumerate}
    \item If taking the free product with $L(\F_{2 - \epsilon})$ for $\epsilon \in (0,1)$ preserves elementary equivalence, then we obtain the desired dichotomy.
        %More precisely, if we have two elementarily equivalent tracial von Neumann algebras $\cM \equiv \cN$ and $\mathcal{A}$ is one of the von Neumann algebras above, then $\cM * \mathcal{A} \equiv \cN * \mathcal{A}$.
    \item If taking the free product with $L(\bb Z)$ or $\cR$ preserves elementary equivalence and $L(\F_2) \equiv L(\F_n)$ for some integer $n \neq 2$, then $L(\F_r) \equiv L(\F_s)$ for all $r, s \in (1, \infty)$.
    \item If taking the free product with $M_2(\C)$ preserves elementary equivalence and $L(\F_m) \equiv L(\F_n)$ for some integers $m, n$ of opposite parity, then $L(\F_r) \equiv L(\F_s)$ for all $r, s \in (1, \infty)$.
\end{enumerate}

Further, using Dykema's notions of standard embeddings \cite{Dykema93FreeDim}, we also prove the following.

\begin{main}
    If for some $1 < r < s$ there is an embedding $\alpha: L(\F_r) \rightarrow L(\F_s)$ that is both standard and elementary, then $L(\F_x) \equiv L(\F_y)$ for all $x, y \in \bb{R}_{>1}$.
\end{main}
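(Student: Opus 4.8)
The plan is to show that a standard elementary embedding $\alpha\colon L(\F_r)\to L(\F_s)$ forces the first-order fundamental group of $L(\F_r)$ to contain a nontrivial element, and then to exploit the amplification identities for interpolated free group factors together with Theorem A to upgrade this to full elementary equivalence of all $L(\F_x)$. Recall the basic relation $L(\F_r)^t\cong L(\F_{1+(r-1)/t^2})$ (R\u adulescu--Dykema). The key feature of a \emph{standard} embedding is that it interacts well with compressions: if $\alpha\colon L(\F_r)\to L(\F_s)$ is standard, then for an appropriate projection $p$ (of trace matching the ``dimension count'' relating $r$ and $s$) the corner $pL(\F_s)p$ is, via standard embeddings, identified with $L(\F_r)$ in a way compatible with $\alpha$. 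Concretely, I would use Dykema's result that standard embeddings are, up to the amplification isomorphisms, just the inclusions $L(\F_r)\hookrightarrow L(\F_r)*L(\F_{s-r})=L(\F_s)$, and that compressing by a projection $p\in L(\F_r)$ of trace $t$ carries this to the standard embedding $L(\F_{1+(r-1)/t^2})\hookrightarrow L(\F_{1+(s-1)/t^2})$.

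First I would translate the hypothesis into the statement that, for this particular pair $(r,s)$, the inclusion $L(\F_r)\subseteq L(\F_s)$ (realized as the standard one) is elementary. Next, using that elementarity is preserved under compression by a projection — i.e. if $N\subseteq M$ is elementary and $p\in N$ is a projection, then $pNp\subseteq pMp$ is elementary (this is a standard fact about elementary inclusions of tracial von Neumann algebras, since $pNp$ is interpretable in $N$ uniformly with parameter $p$, and likewise for $M$) — I would compress by projections of suitable trace to manufacture, from the single elementary standard embedding $L(\F_r)\subseteq L(\F_s)$, an elementary standard embedding $L(\F_{r'})\subseteq L(\F_{s'})$ for a whole one-parameter family of pairs $(r',s')$ obtained by amplification. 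In particular, choosing $t$ so that $1+(r-1)/t^2 = s$, i.e. $t=\sqrt{(r-1)/(s-1)}<1$ (so $t$ is the trace of a projection in the amplified algebra — one works instead with $1/t>1$ and an amplification, or passes to a corner of a matrix amplification, which the standard-embedding formalism accommodates), I would obtain that $L(\F_s)$ elementarily embeds in $L(\F_{s''})$ with $s''>s$, and iterating, that $L(\F_s)$ elementarily embeds into $L(\F_{s_n})$ for an increasing sequence $s_n\to\infty$.

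Combining this with the fact (quoted in the excerpt, Corollary of the $\forall\exists$-increasing result, and more to the point the $\forall\exists$-theories being increasing) that $\Th_{\forall\exists}(L(\F_r))$ is increasing in $r$, plus the elementary embeddings just produced, I would conclude $\Th(L(\F_r))=\Th(L(\F_s))$: an elementary embedding $M\preceq N$ gives $\Th(M)=\Th(N)$ outright, so already $L(\F_r)\equiv L(\F_s)$, and running the amplification/compression argument across the full scale shows $L(\F_x)\equiv L(\F_y)$ for all $x,y$. Equivalently, and perhaps more cleanly: the elementary standard embedding yields that $t$ lies in the first-order fundamental group of $L(\F_r)$ for the relevant $t\neq 1$, so case (2) of Theorem A is excluded; and the self-similar nature of the construction (the same $t$ works after every amplification) shows the fundamental group is the same $\alpha^\Z$-or-larger group at every index, after which a short argument rules out case (3) by producing \emph{two} multiplicatively independent elements — or, if that fails, one simply lands in case (1) directly because the embeddings $L(\F_r)\preceq L(\F_{s_n})$ with $s_n\to\infty$ already force all the theories to coincide.

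The main obstacle I anticipate is the bookkeeping needed to be sure that ``standard'' is genuinely preserved under the compressions and amplifications being performed, and that the elementarity survives alongside it — Dykema's standard embeddings form a category closed under the amplification isomorphisms, but one must verify that the specific corner maps used here are standard in his sense and that compressing an elementary embedding by a projection yields an elementary embedding of the corners (the latter is routine model theory once one has the right continuous-logic interpretation of corners, but it should be stated carefully). A secondary point requiring care is the endpoint $r=\infty$: the amplification identities degenerate there, so reaching $L(\F_\infty)$ in the conclusion ``$x,y\in\bR_{>1}$'' uses instead the continuity statement (Corollary \ref{cor: continuity}) to pass from the finite indices to $\infty$.
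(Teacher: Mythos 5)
Your setup matches the paper's: you compress the given standard elementary embedding (this is exactly Lemma \ref{lemma: induced standard emb}, using Lemma \ref{compresselememb} for the elementarity of corners) and iterate to produce a chain of standard elementary embeddings $L(\F_{r_1})\preceq L(\F_{r_2})\preceq\cdots$ with $r_i\to\infty$. But there is a genuine gap in how you try to finish. Because each induced pair has the \emph{same} ratio $t=\sqrt{(r-1)/(s-1)}$, the elementary equivalences your chain produces, namely $L(\F_r)\equiv L(\F_{s_n})$, only yield the ratios $t^n$. So all you can conclude is that $t^{\bb Z}\subseteq\cal F$, which rules out case (2) of Theorem \ref{thm: trichotomy} but is perfectly consistent with case (3), $\cal F=\alpha^{\bb Z}$. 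Your two proposed escapes both fail: no construction of two multiplicatively independent ratios is given (and the iteration cannot give one, since every ratio it produces is a power of $t$), and the fallback claim that ``$L(\F_r)\preceq L(\F_{s_n})$ with $s_n\to\infty$ already forces all the theories to coincide'' is false --- it forces coincidence only along the orbit $\{1+(r-1)t^{-2n}\}$, not for all parameters.

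The missing idea, which is where standardness is really needed, is to pass to the inductive limit of the chain: by Dykema's result (\cite[Proposition 4.3(ii)]{Dykema93FreeDim}), the inductive limit of standard embeddings $L(\F_{r_i})$ with $r_i\to\infty$ is isomorphic to $L(\F_\infty)$, and since the chain is elementary, its union is an elementary extension of $L(\F_r)$, giving $L(\F_r)\equiv L(\F_\infty)$. One then invokes Lemma \ref{infinity}: because $L(\F_\infty)$ has full fundamental group, $L(\F_x)\cong L(\F_r)_{t'}\equiv L(\F_\infty)_{t'}\cong L(\F_\infty)$ for every $x\in(1,\infty)$, which is case (1) and gives the theorem (no appeal to Corollary \ref{cor: continuity} or to $\forall\exists$-monotonicity is needed, and the latter could not upgrade to full theories anyway). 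Without this limit step, standardness plays no role beyond being preserved under compression, and the argument cannot exclude case (3).
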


Finally, we give some evidence that the third item in the trichotomy is unlikely to hold. By investigating existential embeddings between the interpolated free group factors, we establish the following result on the 2-quantifier theory of the family $\{L(\F_r)\}_{r \in \bb R_{>1}}$.

\begin{main}
    If the first-order fundamental group of the free group factors is not trivial, then the $\forall \exists$-theory of all interpolated free group factors is the same.
\end{main}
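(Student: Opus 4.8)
The plan is to combine two ingredients. The first is that the $\forall\exists$-theory of $L(\F_r)$ is weakly increasing in $r$: from the existential embeddings $L(\F_r)\hookrightarrow L(\F_s)$ for $1<r\le s$ established above, together with the fact that for an existential embedding $M\hookrightarrow N$ one has $\inf_{\bar y\in N}\varphi(\bar a,\bar y)=\inf_{\bar y\in M}\varphi(\bar a,\bar y)$ for quantifier-free $\varphi$ and $\bar a$ from $M$, one obtains $\psi^{L(\F_r)}\le\psi^{L(\F_s)}$ for every $\forall\exists$-sentence $\psi$ (the outer supremum in $\psi^{L(\F_s)}=\sup_{\bar x}\inf_{\bar y}\varphi$ now ranges over the larger algebra). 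The second ingredient is that a nontrivial first-order fundamental group forces self-similarity among the $L(\F_r)$: by Theorem~A the common first-order fundamental group is $\bR_{>0}$ or $\alpha^{\bb Z}$ for some $\alpha>1$, so in either case it contains some $t_0>1$, hence also $t_0^{-1}$; by the Dykema--R\^adulescu compression formula $L(\F_r)^t\cong L(\F_{1+t^{-2}(r-1)})$ we then have $L(\F_r)\equiv L(\F_r)^{t_0^{-1}}\cong L(\F_{1+t_0^{2}(r-1)})$, and iterating, $L(\F_r)\equiv L(\F_{r_n})$ for every $n\ge 0$, where $r_n:=1+t_0^{2n}(r-1)\to\infty$.

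Given these, I would argue as follows for finite indices. Fix $1<r<r'$ and a $\forall\exists$-sentence $\psi$. Monotonicity gives $\psi^{L(\F_r)}\le\psi^{L(\F_{r'})}$. Choosing $n$ with $r_n>r'$, monotonicity again gives $\psi^{L(\F_{r'})}\le\psi^{L(\F_{r_n})}$, while $\psi^{L(\F_{r_n})}=\psi^{L(\F_r)}$ since $L(\F_{r_n})\equiv L(\F_r)$. Hence $\psi^{L(\F_r)}=\psi^{L(\F_{r'})}$, so all $L(\F_r)$ with $r\in(1,\infty)$ have a common $\forall\exists$-theory. (When the fundamental group is $\bR_{>0}$, i.e.\ case~(1) of Theorem~A, this is already immediate from $L(\F_r)\equiv L(\F_{r'})$.)

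It remains to place $L(\F_\infty)$ into this class. Since $\F_\infty=\bigcup_{n\ge2}\F_n$, we have $L(\F_\infty)=\overline{\bigcup_{n\ge2}L(\F_n)}$, and by Kaplansky density $\bigcup_{n\ge 2}(L(\F_n))_{\le1}$ is $\|\cdot\|_2$-dense in the unit ball of $L(\F_\infty)$. Since quantifier-free formulas are uniformly $\|\cdot\|_2$-continuous on the unit ball, for $\bar a$ in the unit ball of some $L(\F_n)$ one has $\inf_{\bar y\in L(\F_\infty)}\varphi(\bar a,\bar y)\le\inf_{\bar y\in L(\F_n)}\varphi(\bar a,\bar y)\le\psi^{L(\F_n)}$, and such $\bar a$ are dense, so $\psi^{L(\F_\infty)}\le\sup_{n\ge2}\psi^{L(\F_n)}=\psi^{L(\F_2)}$ (the last equality by the previous paragraph). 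For the reverse inequality I would use that $L(\F_2)$ is existentially embedded in $L(\F_\infty)$: either directly, if the embeddings of the first ingredient are set up to include the index $\infty$ (note $L(\F_\infty)\cong L(\F_2)*L(\F_\infty)$), or otherwise by observing that $L(\F_\infty)$ is the colimit of the chain of existential embeddings $L(\F_2)\hookrightarrow L(\F_3)\hookrightarrow\cdots$, and that a colimit of such a chain is an existential extension of each term. This yields $\psi^{L(\F_2)}\le\psi^{L(\F_\infty)}$, hence $\psi^{L(\F_\infty)}=\psi^{L(\F_2)}$, so all interpolated free group factors $L(\F_r)$, $r\in(1,\infty]$, share a common $\forall\exists$-theory.

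I expect the genuine difficulty to lie entirely in the first ingredient --- producing the existential embeddings $L(\F_r)\hookrightarrow L(\F_s)$, which carries the operator-algebraic content; once those (and the trichotomy) are in hand, what remains is the short sandwiching argument above, plus two soft model-theoretic points about $L(\F_\infty)$: preservation of $\forall\exists$-sentences along the chain $(L(\F_n))_n$ and existential embeddability of $L(\F_2)$ into its colimit. These last points are routine modulo the usual Kaplansky-density and uniform-continuity bookkeeping that accompanies chains in the metric setting.
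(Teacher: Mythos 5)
Your proposal is correct and follows essentially the same route as the paper: monotonicity of the $\forall\exists$-theory in $r$ via existential embeddings, combined with amplification by an element of the nontrivial first-order fundamental group to jump above any given index, and the chain/colimit argument (the paper's identity $\Th_{\forall\exists}(L(\F_\infty))=\sup_n\Th_{\forall\exists}(L(\F_n))$) to fold in $L(\F_\infty)$. The only cosmetic difference is that you iterate a fixed $t_0>1$ to produce a sequence $r_n\to\infty$, whereas the paper simply chooses a single sufficiently small $t$ in the fundamental group; these are interchangeable since the group is closed under powers.
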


The remainder of the paper is structured as follows. After presenting a few preliminaries in Section 2, we present the trichotomy in Section 3, along with some possibilities for including $L(\F_\infty)$ into the trichotomy. 
We also consider a number of avenues for establishing the dichotomy by using preservation of theories under taking free products. 
We then turn to investigating existential embeddings between the free group factors in Section 4, and obtain that the $\forall \exists$-theory of $L(\F_r)$ is increasing in $r$. 
Lastly in Section 5, we make observations regarding the possible coincidence of the $\forall\exists$-theories of free group factors and matrix ultraproducts, the version of the main question for reduced group C*-algebras of the free groups, and potential attacks on the question of whether or not taking group von Neumann algebras preserves elementary equivalence.
% Lastly in Section 5: we investigate a relationship between the 2-quantifier theory of matrix ultraproducts and the 2-quantifier theory of the free group factors, consider the elementary equivalence problem for the reduced group C*-algebras of the free groups, and outline an approach to finding groups $\Gamma_1, \Gamma_2$ satisfying $\Gamma_1 \equiv \Gamma_2$ while $L(\Gamma_1) \not \equiv L(\Gamma_2)$.

\subsection{Acknowledgements} 

Many of the initial discussions concerning the results in this paper were with David Sherman; we are truly grateful for letting us include some of his valuable insights.  We would also like to thank Ken Dykema for pointing us to his work on standard embeddings, to Dima Shlyakhtenko for answering some of our questions on free entropy, and to Rizos Sklino for discussing the model theory of the free group with us.  Finally, we thank Srivatsav Kunnawalkam Elayavalli for helpful comments on an earlier version of this paper.

\section{Preliminaries}

\subsection{Some model-theoretic notions}

We present a few necessary definitions, but omit much of the formal model theory; we refer the reader to \cite{FarahHartSherman-MTOA1}, \cite{FarahHartSherman-MTOA2}, \cite{FHS3-EE-and-II1-Factors} for details on the model theory of tracial von Neumann algebras.

\begin{defn}
    Two tracial von Neumann algebras $\cM$ and $\cal{N}$ are \textbf{elementarily equivalent}, written $\cM \equiv \cal{N}$, if $\sigma^{\cM}=\sigma^{\cN}$ for every sentence $\sigma$ in the language of tracial von Neumann algebras. 
\end{defn}

By the continuous version of the Keisler-Shelah isomorphism theorem \cite[Theorem 2.1(2)]{FHS3-EE-and-II1-Factors}, $\cM$ and $\cN$ are elementarily equivalent if and only if there are ultrafilters $\cU$ and $\cal V$ on (possibly uncountable) index sets $I$ and $J$ for which $\cM^\cU\cong \cN^\cal V$.  (If one is willing to assume the continuum hypothesis, then one can assume that $\cU=\cal V$ lives on a countable index set.)

We will also need the notion of an elementary embedding:

\begin{defn}
An embedding $i:\cM\hookrightarrow \cN$ between tracial von Neumann algebras is \textbf{elementary} if $\varphi(\vec a)^{\cM}=\varphi(i(\vec a))^{\cN}$ for all formulae $\varphi(\vec x)$ and all tuples $\vec a$ from $\cM$.
\end{defn}

One can give a similar semantic characterization of elementary embeddings: the embedding  $i:\cM\hookrightarrow \cN$ is elementary if and only if there are ultrafilters $\cU$ and $\cal V$ such that $i$ extends to an isomorphism between $\cM^\cU$ and $\cN^{\cal V}$ (with $\cM$ and $\cN$ viewed as subsets of their respective ultrapowers via the diagonal embedding).

\subsection{Amplifications}

We begin this subsection by recalling the fundamental notions of compression and amplification.

\begin{defn}
    Let $\cM$ be a $II_1$ factor and $0 < t < 1$. The \textbf{compression} of $\mathcal{M}$ by $t$ is the II$_1$ factor $\mathcal{M}_t:= p \mathcal{M} p$, where $p \in \mathcal{P(M)}$ is any projection with $\tau(p) = t$.  (This definition is independent of the choice of such a projection.)
    
    For $1 < t < \infty$, the notion of compression is extended to the notion of \textbf{amplifications of $\mathcal{M}$} by taking tensors with matrix algebras.  More precisely, writing $t = n \cdot \ell$, where $0 < \ell < 1$ and $n \in \bN$, we define
    \[
       \mathcal{M}_t := p \mathcal{M} p \otimes M_n(\bC) \cong M_n(p \mathcal{M} p), 
    \]
     where $p \in \mathcal{P(M)}$ is any projection with $\tau(p) = \ell$.  Once again, this definition is independent of the choice of projection as well as the representation $t=n\cdot \ell$.
\end{defn}

We recall that $(\cal M_s)_t\cong \cal M_{st}$ for any $s,t\in \bb R_+$ (see, for example, \cite[Lemma 4.2.3]{Anantharaman-Popa}).

We record that taking amplifications commutes with taking ultraproducts of $II_1$ factors; this fact can be checked simply by unpacking the definitions (and using the well-known fact that a projection $p$ in an ultraproduct can be represented by a sequence of projections of the same trace as $p$).  In the rest of this paper, $\bR_+$ denotes the set of strictly positive real numbers.

\begin{lem} \label{lemma: compr and ultrapower commute}
   Let $\mathcal{M}$ be a $II_1$ factor. Then for any $t\in \bR_+$ and nonprincipal ultrafilter $\mathcal{U}$, we have
   $
    (\mathcal{M}^\mathcal{U})_t \cong (\mathcal{M}_t)^\mathcal{U}.$

\end{lem}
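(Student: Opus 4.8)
The plan is to reduce to the two basic cases of compression by a scalar $\ell\in(0,1)$ and tensoring with a fixed matrix algebra $M_n(\bC)$, since a general amplification $\cM_t$ with $t = n\cdot\ell$ is built from these. For the matrix algebra case, the key observation is that $(\cM\otimes M_n(\bC))^\cU \cong \cM^\cU\otimes M_n(\bC)$: a sequence of $n\times n$ matrices over $\cM$ is the same data as an $n\times n$ matrix of sequences over $\cM$, and passing to the ultraproduct commutes with this finite rearrangement because the operator norm of a matrix $(x_{ij})$ is controlled (up to the factor $n$) by $\max_{ij}\|x_{ij}\|$, so the trace-norm bounded sequences and the trace-zero ideal both respect the decomposition. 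Concretely, the obvious map $[(x^{(k)})]_\cU \mapsto ([(x^{(k)}_{ij})]_\cU)_{ij}$ is a well-defined trace-preserving $*$-isomorphism.

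For the compression case, fix a projection $p\in\cM$ with $\tau(p) = \ell$. Viewing $\cM$ inside $\cM^\cU$ via the diagonal embedding, $p$ is also a projection in $\cM^\cU$ of the same trace, so both $(\cM^\cU)_\ell$ and $(\cM_\ell)^\cU$ can be described using this single $p$. Then $(\cM_\ell)^\cU = (p\cM p)^\cU$ and $(\cM^\cU)_\ell = p\,\cM^\cU p$, and the natural map is $[(x^{(k)})]_\cU\mapsto [(x^{(k)})]_\cU$ where on the left the $x^{(k)}\in p\cM p$ and on the right we regard the same sequence inside $\cM^\cU$ — one checks this lands in $p\cM^\cU p$ (clear, since each $x^{(k)} = px^{(k)}p$) and is onto (given $y = pyp\in p\cM^\cU p$, lift $y$ to a bounded sequence $(y^{(k)})$ in $\cM$ and replace it by $(py^{(k)}p)$, which represents the same element and lies in $p\cM p$). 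Here one must be mildly careful that the trace on the compression is the renormalized trace $\tfrac1\ell\tau(p\,\cdot\,p)$ on both sides, but this renormalization is the same constant, so the isometry of the $2$-norms is immediate and the map is a $*$-isomorphism of tracial von Neumann algebras.

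Finally, combining: for $t = n\ell$ with $\ell\in(0,1)$, we have
\[
  (\cM^\cU)_t = (\cM^\cU)_\ell \otimes M_n(\bC) \cong (\cM_\ell)^\cU \otimes M_n(\bC) \cong (\cM_\ell\otimes M_n(\bC))^\cU = (\cM_t)^\cU,
\]
using the compression case for the first isomorphism and the matrix case for the second. One should also remark that when $t\in(0,1)$ this is just the compression case directly, and when $t\in\bN$ it is just the matrix case, so all ranges of $t\in\bR_+$ are covered. I do not expect a serious obstacle here; the only point requiring a little care is checking well-definedness and surjectivity of the maps at the level of representing sequences (that a bounded sequence in $\cM$ can be truncated/cut down by $p$ or rearranged into matrix form without changing the class modulo the trace-zero ideal), which is exactly the "unpacking the definitions" the statement alludes to.
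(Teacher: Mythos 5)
Your proof is correct, and it is exactly the routine verification the paper has in mind: the paper gives no argument at all, saying only that the lemma "can be checked simply by unpacking the definitions," which is what you do by splitting into the compression and matrix-tensor cases and checking the natural maps at the level of representing sequences. No gaps worth flagging; the points you single out (the renormalized trace differing by a fixed constant, and cutting a lift down by the diagonal projection $p$) are indeed the only places where care is needed.
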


The second necessary fact is that elementary equivalence is preserved under amplifications. 

\begin{lem} \label{lemma: ee preserved under compression}
   Let $\cM, \cal{N}$ be $II_1$ factors satisfying $\cM \equiv \cal{N}$. Then for any $t\in \bR_+$, we have $\cM_t \equiv \cal{N}_t$.
   
   \begin{proof}
   While one can prove this fact using, for example, Ehrenfeucht-Fra\"isse games, the shortest proof uses the Keisler-Shelah theorem.  Indeed, if $\cM \equiv \cal{N}$, then there are nonprincipal ultrafilters $\cal U$ and $\cal V$ such that
   \begin{equation*} 
        \cM^\mathcal{U} \cong \cal{N}^\mathcal{V}.
   \end{equation*}
     
   By Lemma \ref{lemma: compr and ultrapower commute}, we have
   \[
      (\cM_t)^\mathcal{U} \cong (\cM^\mathcal{U})_t \cong (\cal{N}^\mathcal{V})_t \cong (\cal{N}_t)^\mathcal{V},
   \]
   whence we can conclude $\cM_t \equiv \cal{N}_t$.
   \end{proof}
\end{lem}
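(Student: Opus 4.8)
The plan is to reduce elementary equivalence of the amplifications to the existence of a common ultrapower via Keisler--Shelah, and then transport the resulting isomorphism through the amplification construction, exactly mirroring the strategy behind Lemma~\ref{lemma: compr and ultrapower commute}. First I would apply the continuous Keisler--Shelah theorem quoted above to $\cM \equiv \cal N$: this produces ultrafilters $\cU$ and $\cal V$ on suitable index sets together with an isomorphism $\theta\colon \cM^{\cU} \xrightarrow{\ \sim\ } \cal N^{\cal V}$ of $II_1$ factors.

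Next I would record the (routine) fact that amplification is functorial with respect to isomorphisms. Since the trace on a $II_1$ factor is unique, any isomorphism $\theta\colon P \to Q$ of $II_1$ factors is trace-preserving; hence for $0<\ell<1$ and a projection $p\in P$ with $\tau(p)=\ell$, the element $\theta(p)$ is a projection of trace $\ell$ and $\theta$ restricts to an isomorphism $pPp \to \theta(p)\,Q\,\theta(p)$, which gives $P_\ell \cong Q_\ell$ because $Q_\ell$ does not depend on the chosen projection. For general $t = n\cdot\ell$ one then applies $M_n(-)$, which is itself functorial, to deduce $P_t \cong Q_t$. Applying this with $P = \cM^{\cU}$, $Q = \cal N^{\cal V}$ and $\theta$ as above yields $(\cM^{\cU})_t \cong (\cal N^{\cal V})_t$.

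Finally I would chain this with Lemma~\ref{lemma: compr and ultrapower commute}, applied on both sides, to obtain
\[
 (\cM_t)^{\cU} \;\cong\; (\cM^{\cU})_t \;\cong\; (\cal N^{\cal V})_t \;\cong\; (\cal N_t)^{\cal V},
\]
so that $\cM_t$ and $\cal N_t$ possess a common ultrapower; by the Keisler--Shelah characterization quoted above (the direction from a common ultrapower to elementary equivalence) we conclude $\cM_t \equiv \cal N_t$. I do not expect a real obstacle here, since Lemma~\ref{lemma: compr and ultrapower commute} is already available and the functoriality of amplification is standard; the only thing needing a word of care is precisely that functoriality, i.e.\ that the isomorphism class of $P_t$ depends only on that of $P$ and not on the auxiliary projection or the factorization $t = n\cdot\ell$. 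A more hands-on alternative would be a direct Ehrenfeucht--Fra\"iss\'e argument, transferring plays in $\cM_t$ and $\cal N_t$ to plays in $\cM$ and $\cal N$ through the matrix coordinates, but it is more cumbersome and gives nothing extra.
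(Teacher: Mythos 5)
Your proposal is correct and follows essentially the same route as the paper: invoke Keisler--Shelah to get $\cM^{\cU}\cong\cal N^{\cal V}$, use the fact that amplification commutes with ultrapowers (Lemma \ref{lemma: compr and ultrapower commute}) on both sides, and conclude via the converse direction of Keisler--Shelah. The only difference is that you spell out explicitly the (indeed routine) step that an isomorphism of II$_1$ factors induces an isomorphism of their $t$-amplifications, which the paper leaves implicit.
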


Analogous arguments show the following more general fact:

\begin{lem}\label{compresselememb}
If $\alpha: \cM\to \cN$ is an elementary embedding and $p\in \cal P(\cM)$, then so are $\alpha|p \cM p: p \cM p\to \alpha(p)\cN \alpha(p)$ and $\alpha\otimes \operatorname{id}_{M_n(\bb C)}:M_n(\cM)\to M_n(\cN)$.  
\end{lem}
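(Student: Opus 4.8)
The plan is to imitate the proof of Lemma~\ref{lemma: ee preserved under compression}, but now invoking the semantic characterization of \emph{elementary embeddings} recalled above in place of that of elementary equivalence. Since $\alpha\colon\cM\to\cN$ is elementary, there are ultrafilters $\cU$ and $\cal V$ and an isomorphism $\widehat\alpha\colon\cM^\cU\to\cN^{\cal V}$ extending $\alpha$, where $\cM$ and $\cN$ are identified with their diagonal copies in their respective ultrapowers. To conclude that $\alpha|_{p\cM p}$ (respectively $\alpha\otimes\operatorname{id}_{M_n(\bb C)}$) is elementary, it then suffices, by the same characterization, to exhibit ultrafilters and an isomorphism between an ultrapower of $p\cM p$ and one of $\alpha(p)\cN\alpha(p)$ (respectively of $M_n(\cM)$ and $M_n(\cN)$) that extends the map in question; I will use the ultrafilters $\cU$ and $\cal V$ already at hand.

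For the compression, first note that $\widehat\alpha(p)=\alpha(p)$ since $p\in\cM$, so $\widehat\alpha$ restricts to an isomorphism $p\,\cM^\cU p\to\alpha(p)\,\cN^{\cal V}\alpha(p)$. On the other hand, viewing a bounded sequence from $p\cM p$ as a bounded sequence from $\cM$ induces a natural trace-preserving $\ast$-isomorphism $(p\cM p)^\cU\cong p\,\cM^\cU p$ — this is the content of the ``unpack the definitions'' remark preceding Lemma~\ref{lemma: compr and ultrapower commute} — and, crucially, this isomorphism restricts to the identity on the diagonal copy of $p\cM p$; the analogous statement holds for $\alpha(p)\cN\alpha(p)$ inside $\cN^{\cal V}$. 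Composing these three isomorphisms produces an isomorphism $(p\cM p)^\cU\to(\alpha(p)\cN\alpha(p))^{\cal V}$ whose restriction to the diagonal copy of $p\cM p$ is precisely $\alpha|_{p\cM p}$. Hence $\alpha|_{p\cM p}$ is elementary.

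The amplification case is entirely parallel: one replaces the identification $(p\cM p)^\cU\cong p\,\cM^\cU p$ with the equally routine natural identification $M_n(\cM)^\cU\cong M_n(\cM^\cU)$ (under which the diagonal embedding of $M_n(\cM)$ corresponds to applying $M_n(-)$ to the diagonal embedding of $\cM$), and replaces $\widehat\alpha$ with $\widehat\alpha\otimes\operatorname{id}_{M_n(\bb C)}\colon M_n(\cM^\cU)\to M_n(\cN^{\cal V})$, which is an isomorphism extending $\alpha\otimes\operatorname{id}_{M_n(\bb C)}$. Composing yields an isomorphism $M_n(\cM)^\cU\to M_n(\cN)^{\cal V}$ extending $\alpha\otimes\operatorname{id}_{M_n(\bb C)}$, so the latter is elementary. (Injectivity of both maps is immediate, being inherited from $\alpha$.)

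The only step requiring any care — and it is the same bookkeeping already implicit in the proof of Lemma~\ref{lemma: ee preserved under compression} — is verifying that the natural isomorphisms $(p\cM p)^\cU\cong p\,\cM^\cU p$ and $M_n(\cM)^\cU\cong M_n(\cM^\cU)$ genuinely restrict to the identity on the respective base algebras, so that the composite extends $\alpha$ on the nose rather than merely up to an automorphism. This amounts only to unwinding the definitions of amplification and of the ultrapower, and I do not anticipate any essential obstacle.
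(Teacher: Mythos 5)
Your proof is correct and is precisely the argument the paper intends: the paper writes out no separate proof of Lemma \ref{compresselememb}, saying only that it follows by arguments analogous to Lemmas \ref{lemma: compr and ultrapower commute} and \ref{lemma: ee preserved under compression}, and your use of the semantic characterization of elementary embeddings together with the natural identifications $(p\cM p)^\cU\cong p\,\cM^\cU p$ and $M_n(\cM)^\cU\cong M_n(\cM^\cU)$ (checked to restrict to the identity on the diagonal copies) is exactly that analogue. Nothing further is needed.
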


\begin{prop}\label{continuity}
For any II$_1$ factor $\cM$, any $t_k,t\in \mathbb R_+$, and any nonprincipal ultrafilter $\cal U$ on $\bb N$ for which $\lim_{\cU} t_k= t$, we have $\prod_{\cal U} \cM_{t_k} \cong \cM_t^{\cal U}$.  

% This implies that if $t_k \in \cal{F}$ and $t_k \rightarrow t$, then $L(\F_x) \equiv \prod_\cU L(\F_x)_{t_k} \cong L(\F_x)_t^\cU$, so $L(\F_x) \equiv L(\F_x)_t$.
\end{prop}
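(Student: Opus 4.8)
The plan is to realize all of the amplifications $\mathcal M_{t_k}$ and $\mathcal M_t$ simultaneously as corners of one fixed ambient $\mathrm{II}_1$ factor, and then to observe that cutting down by a projection commutes with the ultraproduct construction. Fix an integer $N > t$. Since $\lim_{\mathcal U} t_k = t < N$, the set $\{k : t_k < N\}$ lies in $\mathcal U$, and because the ultraproduct $\prod_{\mathcal U} \mathcal M_{t_k}$ depends only on the values of $t_k$ for $k$ in a set in $\mathcal U$, we may assume $t_k < N$ for every $k$. Put $\mathcal N := \mathcal M \otimes M_N(\mathbb C)$, and recall the standard fact (multiplicativity of the amplification construction, $(\mathcal A_a)_b \cong \mathcal A_{ab}$, applied to $\mathcal N = \mathcal M_N$) that $\mathcal M_{t_k}$ is isomorphic to the corner $p_k \mathcal N p_k$ for any projection $p_k \in \mathcal N$ with $\tau(p_k) = t_k/N$, and likewise $\mathcal M_t \cong p\,\mathcal N\,p$ for a projection $p \in \mathcal N$ of trace $t/N$. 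Fix such projections $p_k$.

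Set $q := (p_k)_{\mathcal U} \in \mathcal N^{\mathcal U}$, a projection with $\tau(q) = \lim_{\mathcal U} t_k/N = t/N$, which is nonzero since $t > 0$. The key step is the identification
\[
\prod_{\mathcal U} \big( p_k \mathcal N p_k \big) \;\cong\; q\, \mathcal N^{\mathcal U}\, q ,
\]
realized by the map that sends the class of a bounded sequence $(x_k)$ with $x_k \in p_k \mathcal N p_k$ to $(x_k)_{\mathcal U} \in \mathcal N^{\mathcal U}$. This map is well defined and isometric because the $2$-norm of the corner $p_k \mathcal N p_k$ differs from the ambient $2$-norm of $\mathcal N$ only by the scalar factor $\tau(p_k)^{-1/2} = (N/t_k)^{1/2}$, which stays bounded and bounded away from $0$ along $\mathcal U$; its image is visibly contained in $q \mathcal N^{\mathcal U} q$; and it is onto because an arbitrary $y = (y_k)_{\mathcal U} \in q \mathcal N^{\mathcal U} q$ is equally represented by the bounded sequence $(p_k y_k p_k)$, whose entries lie in the corners. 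It is trace preserving since $\tau_{p_k \mathcal N p_k}(x_k) = \tau_{\mathcal N}(x_k)/\tau(p_k)$ and $\lim_{\mathcal U} \tau(p_k) = t/N \neq 0$ (alternatively, both sides are $\mathrm{II}_1$ factors, so trace preservation is automatic).

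Assembling everything with Lemma \ref{lemma: compr and ultrapower commute} finishes the proof:
\[
\prod_{\mathcal U} \mathcal M_{t_k} \;\cong\; \prod_{\mathcal U} \big( p_k \mathcal N p_k \big) \;\cong\; q\, \mathcal N^{\mathcal U}\, q \;\cong\; q\, \big( \mathcal M^{\mathcal U} \otimes M_N(\mathbb C) \big)\, q \;\cong\; (\mathcal M^{\mathcal U})_t \;\cong\; (\mathcal M_t)^{\mathcal U},
\]
where the third isomorphism is Lemma \ref{lemma: compr and ultrapower commute} with $t$ replaced by $N$ (amplification by a matrix algebra plainly commutes with ultrapowers), the fourth holds because $q$ then becomes a projection of trace $t/N$ in $\mathcal M^{\mathcal U} \otimes M_N(\mathbb C)$ and amplification does not depend on the choice of projection, and the last is Lemma \ref{lemma: compr and ultrapower commute} once more. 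The only delicate point — and the one spot where the requirement $t > 0$ is used — is the middle isomorphism $\prod_{\mathcal U}(p_k \mathcal N p_k) \cong q \mathcal N^{\mathcal U} q$: one must verify that the ultraproduct of the corners is precisely the corner of $\mathcal N^{\mathcal U}$ by the ``limit projection'' $q = (p_k)_{\mathcal U}$, with the two trace normalizations agreeing, and this rests on $\tau(p_k) = t_k/N$ staying bounded away from $0$ along $\mathcal U$. The remaining steps are a routine unwinding of definitions, in the spirit of the proof of Lemma \ref{lemma: compr and ultrapower commute}.
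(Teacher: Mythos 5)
Your proof is correct, and it runs on the same engine as the paper's: both ultimately rest on the fact that an ultraproduct of corners $p_kA_kp_k$ can be identified with the corner of the ultraproduct by the limit projection $(p_k)_{\mathcal U}$, which is legitimate precisely because the traces $\tau(p_k)$ stay bounded away from $0$ along $\mathcal U$. The packaging differs, though. The paper amplifies each $\mathcal M_{t_k}$ by $\gamma_k = t/t_k \to 1$ to produce algebras $\mathcal A_k \cong \mathcal M_t$ (which forces a case split according to whether $\gamma_k \leq 1$ or $1 < \gamma_k < 2$) and then identifies $\prod_{\mathcal U}\mathcal A_k$ with $\prod_{\mathcal U}\mathcal M_{t_k}$ in a single sentence; you instead realize all the $\mathcal M_{t_k}$ uniformly as corners of one fixed ambient algebra $\mathcal N = \mathcal M \otimes M_N(\mathbb C)$ and cut $\mathcal N^{\mathcal U}$ down by $q = (p_k)_{\mathcal U}$. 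Your arrangement avoids the case split and makes fully explicit the corner-versus-ultraproduct identification that the paper glosses over, including the correct comparison of the renormalized traces; the price is an extra (harmless) appeal to Lemma \ref{lemma: compr and ultrapower commute} to pass between $(\mathcal M \otimes M_N(\mathbb C))^{\mathcal U}$ and $\mathcal M^{\mathcal U} \otimes M_N(\mathbb C)$. In both arguments the hypothesis $t > 0$ enters at exactly the same point, and your proof is a valid substitute for the paper's.
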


\begin{proof}
    Note that $(\cM_{t_k})_{\gamma_k} \cong \cM_t$, where $\gamma_k = \frac{t}{t_k}$. Since $\lim_{\cU}\gamma_k= 1$, assume without loss of generality that $\gamma_k < 2$ for all $k\in \bb N$. Set $s > \max\{t, \sup_k t_k\}$. We now define II$_1$ factors $\cal A_k$ as follows:
    \begin{itemize}
        \item if $\gamma_k \leq 1$, set $\cal A_k := p_k \cM_{t_k} p_k$ for some projection $p_k \in \cal P(\cM_s)$ satisfying $\tau(p_k) = \gamma_k$, and
        \item if $1 < \gamma_k < 2$, set $\cal A_k := p_k M_2(\cM_{t_k}) p_k$ for some projection $p_k \in \cal P(\cM_s)$ satisfying $\tau(p_k) = \gamma_k/2$.
    \end{itemize}
    Then, letting $\theta_k: \cal A_k \rightarrow \cM_t$ be the isomorphism witnessing $\cal A_k = (\cM_{t_k})_{\gamma_k} \cong \cM_t$, we have that $\prod_{\cU} \theta_k : \prod_{\cU} \cal A_k \rightarrow \cM_t^\cU$ is an isomorphism. We conclude by noting that $\prod_{\cU} \cal A_k \cong \prod_\cU \cM_{t_k}$.
\end{proof}

% \begin{remark}\label{generalizedcontinuity}
% An analogous proof shows the following more general fact:  For any tracial von Neumann algebra $\cN$, any II$_1$ factor $\cM$, any $t_k,t\in \mathbb R_+$, and any nonprincipal ultrafilter $\cU$ on $\bb N$ with $\lim_\cU t_k= t$, we have $$\prod_{\cal U} (\cN * \cM_{t_k}) \cong (\cN * \cM_t)^{\cal U}.$$  
% \end{remark}

We now recall the definition of the fundamental group of a $II_1$ factor. 
\begin{defn}
    The \textbf{fundamental group} of a $II_1$ factor $\mathcal{M}$ is $$\cal F(\cal M):=\{ t \in \bR_+ \ : \ \mathcal{M}_t \cong \mathcal{M} \}.$$ 
\end{defn}

The terminology fundamental group is appropriate as $\cal F(\cal M)$ is indeed a subgroup of the multiplicative group $\bb R_+$ using the aforementioned fact that $(\cal M_s)_t\cong \cal M_{st}$.

In \cite{GoldbringHart16}, the first-named author and Hart introduced the \textbf{first-order fundamental group} $\mathcal{F}_\fo(\mathcal{M}):=\{t \in \bR_+ \ : \ \mathcal{M}_t \equiv \cM\}$ of a II$_1$ factor $\cal M$. (Equivalently, assuming the continuum hypothesis and further assuming that $\mathcal{M}$ is separable, the first-order fundamental group of a II$_1$ factor $\cM$ is equal to the ordinary fundamental group of any nonprincipal ultrapower of $\cM$.) It was shown there that $\cal F_{fo}(\cal M)$ is a closed subgroup of $\bb R_+$ containing the ordinary fundamental group $\cal F(\cal M)$ as a subgroup.  (We note that $\cal F(\cal M)$ need not always be closed.)  At the time of writing of this paper, there is no known example of a II$_1$ factor $\cal M$ whose first-order fundamental group is not all of $\bb R_+$.

\subsection{Interpolated free group factors}

We next recall the interpolated free group factors:
\begin{fact}[\cite{Dykema94}, \cite{Rad1992fundamental}] \label{interpfact}
    There is a family $\{L(\F_r)\}_{1<r \leq \infty}$ of separable $II_1$ factors with the following properties:
    \begin{enumerate}
        \item If $r\in \{2,3,\ldots\}\cup\{\infty\}$, then $L(\bb F_r)$ is the usual group von Neumann algebra associated to the free group $\bb F_r$.
        \item $L(\bb F_r)*L(\bb F_s)\cong L(\bb F_{r+s})$ for all $r,s\in (1,\infty]$.
        \item $L(\bb F_r)_t\cong L(\bb F_{1+\frac{r-1}{t^2}})$ for all $r\in (1,\infty]$ and $t\in (0,\infty)$.
    \end{enumerate}
\end{fact}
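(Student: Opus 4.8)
The plan is to follow the free-probabilistic construction of Dykema and R\^adulescu, whose engine is a numerical invariant --- the \emph{free dimension} $\operatorname{fdim}$ --- defined on the class $\mathcal{C}$ of separable tracial von Neumann algebras that arise as free products of at most countably many pieces, each of which is a copy of $L(\bb Z)$, a matrix algebra $M_k(\bC)$, or a diffuse abelian algebra; this class is closed under free products, compressions, and amplifications. One normalizes $\operatorname{fdim}$ by declaring $\operatorname{fdim}(L(\bb Z))=1$, $\operatorname{fdim}(L^\infty[0,1])=1$, $\operatorname{fdim}(M_k(\bC))=1-k^{-2}$, and by decreeing additivity under free products, and one proves the two scaling properties that make this bookkeeping consistent: $\operatorname{fdim}$ is additive over free products of members of $\mathcal{C}$, and if $\mathcal{M}\in\mathcal{C}$ is a factor then $\mathcal{M}_t\in\mathcal{C}$ with $\operatorname{fdim}(\mathcal{M}_t)=1+(\operatorname{fdim}(\mathcal{M})-1)/t^2$. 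The key structural output is a classification statement: a \emph{factor} in $\mathcal{C}$ is determined up to isomorphism by its free dimension. One then \emph{defines} $L(\F_r)$, for $r\in(1,\infty)$, to be the unique factor in $\mathcal{C}$ of free dimension $r$, and takes $L(\F_\infty)$ to be the usual group von Neumann algebra $L(\bb F_\infty)$; equivalently, $L(\F_r)=L(\bb F_n)_t$ whenever $r=1+(n-1)/t^2$ with $n\in\bN$ and $t\in(0,\infty)$.

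The substance is in erecting this framework. First, $L(\bb F_n)$ is generated by $n$ free Haar unitaries, hence lies in $\mathcal{C}$ with $\operatorname{fdim}=n$ by additivity, and Voiculescu's semicircular model (that $L(\bb F_n)$ is also generated by $n$ free semicircular elements) supplies the bridge to random matrices. Next one proves the basic free-product identities of \cite{Dykema93FreeDim} --- the free product of two diffuse abelian algebras is $L(\bb F_2)$, $R*L(\bb F_n)\cong L(\bb F_{n+1})$ for $R$ diffuse hyperfinite, and their analogues involving matrix algebras --- from which the additivity of $\operatorname{fdim}$ and the classification statement follow, in particular giving property (2) for integer indices. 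One then verifies the amplification and compression formulas $M_k(L(\bb F_n))\cong L(\bb F_{1+(n-1)/k^2})$ and $L(\bb F_n)_{1/k}\cong L(\bb F_{1+k^2(n-1)})$ for positive integers $k$, which are the concrete manifestations of the $\operatorname{fdim}$-scaling recorded above. The one genuinely separate ingredient is R\^adulescu's theorem that $pL(\bb F_\infty)p\cong L(\bb F_\infty)$ for every projection $p$ (and similarly $M_k(L(\bb F_\infty))\cong L(\bb F_\infty)$), so that $\mathcal{F}(L(\bb F_\infty))=\bR_+$; this is proved from the free-creation-operator realization of $L(\bb F_\infty)$ by writing down an explicit free semicircular family generating the compression.

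Granting all of this, verification of (1)--(3) is bookkeeping. Well-definedness of $L(\F_r)$ --- that $L(\bb F_n)_t\cong L(\bb F_m)_s$ whenever $1+(n-1)/t^2=1+(m-1)/s^2$ --- is exactly the classification statement combined with the compression and amplification formulas. Property (1) is immediate from the definition. Property (3) follows because, for $r\in(1,\infty)$, $L(\F_r)_t$ again lies in $\mathcal{C}$ with $\operatorname{fdim}=1+(r-1)/t^2$, so the classification statement identifies it with $L(\F_{1+(r-1)/t^2})$; the case $r=\infty$ is R\^adulescu's result used directly. Property (2) follows because $L(\F_r)*L(\F_s)$ lies in $\mathcal{C}$ with $\operatorname{fdim}=r+s$, so it is $L(\F_{r+s})$ (for the cases involving $\infty$ one instead invokes the absorption $L(\bb F_\infty)*N\cong L(\bb F_\infty)$); separability is preserved at every stage. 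The main obstacle is the analytic heart of the middle paragraph: the free-product identities and the amplification formula rest on delicate random-matrix and asymptotic-freeness estimates, R\^adulescu's computation of $\mathcal{F}(L(\bb F_\infty))$ requires the explicit generator construction, and one must take care that the overlapping parametrizations $r=1+(n-1)/t^2$ are handled coherently so that the classification statement applies uniformly across the whole family.
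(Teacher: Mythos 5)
The paper does not prove this statement at all---it is quoted as a Fact with citations to Dykema and R\^adulescu---and your sketch is a faithful outline of exactly those cited constructions: the free-dimension bookkeeping on free products of $L(\bb Z)$, $M_k(\bb C)$ and diffuse abelian algebras, the additivity and rescaling formulas that make $L(\bb F_r)$ well defined, and R\^adulescu's computation of $\cal F(L(\bb F_\infty))=\bb R_+$ for the $r=\infty$ case of (3). So the proposal is correct and follows essentially the same route as the sources the paper relies on, with the only caveat being the standard one you already note implicitly: free dimension is used only in the direction ``same value implies isomorphic'' for algebras presented in this class, not as an isomorphism invariant.
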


A particular consequence of item (3) above is that the interpolated free group factors $L(\bb F_r)$ for finite $r$ are each amplifications of each other, whence they all have the same fundamental group, while the fundamental group of $L(\bb F_\infty)$ is $\bb R_+$, a fact first proven by R\v adulescu in \cite{Rad1992fundamental}.

We will need a generalization of item (2) in the previous fact due to Dykema \cite{Dykema93FreeDim}.  To state the result, we define the \textbf{free dimension} $\operatorname{fd}$ of certain tracial von Neumann algebras as follows:
\begin{itemize}
    \item $\operatorname{fd}(L(\bb F_r))=r$.
    \item $\operatorname{fd}(M_k(\bb C))=1-\frac{1}{k^2}$.
    \item $\operatorname{fd}(L(\bb Z))=\operatorname{fd}(\cal R)=1$.
\end{itemize}

Here and throughout this paper, $\cal R$ denotes the unique separable hyperfinite II$_1$ factor.

\begin{fact}\label{freedimfact}
If $\cal M$ and $\cal N$ are each either $L(\bb F_r)$ (for some $r\in (1,\infty]$), $M_k(\bb C)$ (for some $k\in \bb N$), $L(\bb Z)$, or $\cal R$, then $\cal M*\cal N\cong L(\bb F_t)$ with $t:=\operatorname{fd}(\cal M)+\operatorname{fd}(\cal N)$.
\end{fact}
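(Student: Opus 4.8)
The plan is to follow the free-probabilistic approach of Dykema \cite{Dykema93FreeDim} (R\^adulescu \cite{Rad1992fundamental} gives an alternative route), organizing matters in three layers: reduce the general free product to a short list of building blocks; compute the finite-dimensional building blocks directly; and track the resulting parameter, verifying that it is governed by the additive quantity $\operatorname{fd}$. The one structural input I take for granted is Fact \ref{interpfact}: it already disposes of the case in which both $\cal M$ and $\cal N$ are (interpolated) free group factors, and item (3) supplies the amplification law $\operatorname{fd}(L(\bb F_r)_t) = 1 + t^{-2}(r-1)$, which serves as the engine for passing between an algebra in the class and its compressions.

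Next I would carry out the reductions. For $\cal R$: write it as the weak closure of an increasing union $\overline{\bigcup_m M_{2^m}(\bb C)}$ and use continuity of the reduced free product along increasing unions, so that $\cal R * \cal N$ is the inductive limit of the $M_{2^m}(\bb C) * \cal N$ along Dykema's standard embeddings; since a standard inductive system $L(\bb F_{r_m})$ with $r_m \uparrow r$ has limit $L(\bb F_r)$, this pins the answer down once matrix-algebra free products are known. For $L(\bb Z)$: argue the same way via $L(\bb Z) \cong \overline{\bigcup_m \bb C^{2^m}}$, or appeal directly to Dykema's analysis of free products with a diffuse abelian algebra. For $\cal N = M_k(\bb C)$: compress $\cal M * M_k(\bb C)$ by a minimal projection $p$; since $M_k(\bb C)$ is a unital subfactor, $\cal M * M_k(\bb C) \cong M_k(\bb C) \otimes p(\cal M * M_k(\bb C))p$, and the Voiculescu--Dykema compression formula for free products rewrites $p(\cal M * M_k(\bb C))p$ as a free product of a compression of $\cal M$ with finitely many free ``conditioned'' copies accounting for the $k^2 - 1$ off-diagonal matrix directions --- which, after applying Fact \ref{interpfact}(3) to the compression of $\cal M$, is an instance treated earlier. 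Organizing all of this as a simultaneous induction over the algebras in the list leaves only the finite-dimensional base cases.

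Those base cases --- free products of the form $M_k(\bb C) * M_\ell(\bb C)$ and $M_k(\bb C) * L(\bb F_r)$ --- are the real content of the theorem, and I expect the base-case step to be the main obstacle: it is not deducible from Fact \ref{interpfact}, and it relies on Voiculescu's asymptotic freeness of independent random matrices (equivalently, the combinatorial free-cumulant calculus) to produce, after realizing the free product concretely on the free-product Hilbert space, an explicit generating set of free (semi)circular elements together with a matrix system, from which one reads off that the von Neumann algebra generated is an interpolated free group factor of the predicted parameter. A subtlety to keep in mind is that $\operatorname{fd}$ is \emph{not} an isomorphism invariant in general --- only a bookkeeping device valid for the specific class in the statement --- so one must check that the different ways of writing a given algebra in the class as a free product (or limit or compression thereof) all yield the same $\operatorname{fd}$-count. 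This coherence is forced: the elementary values $\operatorname{fd}(M_k(\bb C)) = 1 - k^{-2}$ and $\operatorname{fd}(L(\bb Z)) = \operatorname{fd}(\cal R) = 1$, the compression law $\operatorname{fd}(\cal M_t) = 1 + t^{-2}(\operatorname{fd}(\cal M) - 1)$ (which is Fact \ref{interpfact}(3) for free group factors and a direct computation with corners otherwise), and additivity under free products of free group factors (Fact \ref{interpfact}(2)) together determine the output parameter uniquely whenever it lies in $(1,\infty]$. Hence the only genuine work is to establish the isomorphisms, i.e.\ the base-case computations.
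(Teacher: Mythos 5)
The first thing to note is that the paper does not prove this statement at all: it is recorded as a \emph{Fact} and attributed wholesale to Dykema \cite{Dykema93FreeDim}, so there is no internal argument to measure your proposal against. What you have written is in effect a reconstruction of the architecture of the cited source, and as such it is faithful: Dykema does proceed by (i) computing free products of finite-dimensional algebras explicitly, (ii) treating $\cR$ and $L(\bb Z)$ as inductive limits of finite-dimensional algebras along standard embeddings, using that a standard inductive system $L(\F_{r_m})$ with $r_m\uparrow r$ has limit $L(\F_r)$, and (iii) using matrix compression/amplification identities (the paper's Facts \ref{dykemafact} and \ref{freeprodfact} are essentially the tools you invoke) to reduce everything to those base cases. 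Your identification of the finite-dimensional base cases as the genuine content is also correct.

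That said, as a standalone proof your proposal has a real gap exactly where you flag one: the base cases are only described (``asymptotic freeness \ldots an explicit generating set of free (semi)circular elements together with a matrix system, from which one reads off \ldots''), not carried out. Producing those generating systems --- free semicircular/circular families cut by projections against a system of matrix units --- and verifying that they generate an interpolated free group factor of the predicted parameter is the bulk of \cite{Dykema94, Dykema93FreeDim}, and nothing in the sketch substitutes for it. A smaller caveat: the claim that the $\operatorname{fd}$-bookkeeping ``coherence is forced'' by the compression and additivity laws is too quick. In Dykema's treatment free dimension is a label attached to a construction rather than an isomorphism invariant (its invariance is essentially equivalent to the negative side of the isomorphism problem), and the consistency of the parameter across the various decompositions is something the case-by-case isomorphism proofs \emph{deliver}, not something derivable in advance from the stated laws. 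So the proposal is an accurate road map of the external proof, but it is a road map rather than a proof.
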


Thus, for example, we have $\cal R*L(\bb F_n)\cong L(\bb F_{n+1})$ and $L(\bb F_k)*M_n(\bb C)\cong L(\bb F_{k+1-\frac{1}{n^2}})$.

We will also have the occasion to use an even more general formula due to Dykema \cite[Theorem 1.2] {Dykema93FreeDim}:

\begin{fact}\label{dykemafact}
For any two II$_1$ factors $\cal M$ and $\cal N$ and any $n\geq 1$, we have
$$((\cal M\otimes M_n(\bb C))*\cal N)_{\frac{1}{n}}\cong \cal M*(M_n(\bb C)*\cal N)_{\frac{1}{n}}.$$
\end{fact}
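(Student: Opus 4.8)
The plan is to exhibit both sides as compressions of the single von Neumann algebra
\[
\mathcal{P} \;:=\; (\mathcal{M}\otimes M_n(\mathbb{C})) * \mathcal{N}
\]
by the projection $e := 1_{\mathcal{M}}\otimes e_{11}$, which has trace $\tfrac1n$ in $\mathcal{P}$, and then to identify the two evident corners of $\mathcal{P}$ that generate $e\mathcal{P}e$ as \emph{free} subalgebras isomorphic to $\mathcal{M}$ and to $(M_n(\mathbb{C})*\mathcal{N})_{1/n}$. Since by definition of the $\tfrac1n$-compression one has $((\mathcal{M}\otimes M_n(\mathbb{C}))*\mathcal{N})_{1/n} = e\mathcal{P}e$, this will give the asserted isomorphism.

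\emph{Corners and generation.} Inside $\mathcal{P}$, write $\mathcal{D} := 1_{\mathcal{M}}\otimes M_n(\mathbb{C})\cong M_n(\mathbb{C})$, with matrix units $e_{ij} := 1_{\mathcal{M}}\otimes e_{ij}$ and $e := e_{11}$; set $\mathcal{A}_1 := \mathcal{M}\otimes M_n(\mathbb{C})$ and $\mathcal{A}_2 := W^*(\mathcal{D}\cup\mathcal{N})$. Since $\mathcal{D}\subseteq\mathcal{M}\otimes M_n(\mathbb{C})$ is free from $\mathcal{N}$ in $\mathcal{P}$, we get $\mathcal{A}_2\cong M_n(\mathbb{C})*\mathcal{N}$, hence $e\mathcal{A}_2 e\cong(M_n(\mathbb{C})*\mathcal{N})_{1/n}$, while $e\mathcal{A}_1 e = \mathcal{M}\otimes(e_{11}M_n(\mathbb{C})e_{11}) = \mathcal{M}\otimes\mathbb{C}e_{11}\cong\mathcal{M}$. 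As $\mathcal{A}_1\cup\mathcal{A}_2$ generates $\mathcal{P}$ and $\mathcal{D}$ (containing the matrix units $e_{ij}$) is contained in $\mathcal{A}_1\cap\mathcal{A}_2$, the standard matrix-unit bookkeeping applies: $x\mapsto(e_{1i}xe_{j1})_{ij}$ is a $*$-isomorphism of $\mathcal{P}$ onto $M_n(e\mathcal{P}e)$ carrying $\mathcal{A}_k$ onto $M_n(e\mathcal{A}_k e)$, so $e\mathcal{A}_1 e$ and $e\mathcal{A}_2 e$ together generate $e\mathcal{P}e$. Thus it remains to prove that $e\mathcal{A}_1 e$ and $e\mathcal{A}_2 e$ are free in $(e\mathcal{P}e,\, n\tau|_{e\mathcal{P}e})$.

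\emph{Freeness via amalgamation over $M_n(\mathbb{C})$.} Two ingredients. First, $\mathcal{A}_1$ and $\mathcal{A}_2$ are free with amalgamation over $\mathcal{D}$ in $(\mathcal{P}, E_{\mathcal{D}})$; this is the associativity of amalgamated free products, i.e. the identity $(\mathcal{M}\otimes M_n(\mathbb{C}))*_{\mathbb{C}}\mathcal{N} = (\mathcal{M}\otimes M_n(\mathbb{C}))*_{\mathcal{D}}(\mathcal{D}*_{\mathbb{C}}\mathcal{N})$, which is valid because $\mathcal{D}\subseteq\mathcal{M}\otimes M_n(\mathbb{C})$. Second, a Morita-type descent: for $x = exe\in e\mathcal{P}e$ one has $E_{\mathcal{D}}(x) = eE_{\mathcal{D}}(x)e\in e\mathcal{D}e = \mathbb{C}e$, and comparing $\tau$-values forces $E_{\mathcal{D}}(x) = n\tau(x)\,e$; in particular $E_{\mathcal{D}}(x) = 0$ precisely when $x$ is centered for the normalized trace $n\tau$ on $e\mathcal{P}e$. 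Consequently, any alternating product of $n\tau$-centered elements of $e\mathcal{A}_1 e$ and $e\mathcal{A}_2 e$ is an alternating product of $E_{\mathcal{D}}$-centered elements of $\mathcal{A}_1$ and $\mathcal{A}_2$, so its $E_{\mathcal{D}}$ — and therefore its trace — vanishes by the amalgamated freeness; this is exactly scalar freeness of $e\mathcal{A}_1 e$ and $e\mathcal{A}_2 e$ in $(e\mathcal{P}e, n\tau)$. Together with the generation statement, $e\mathcal{P}e\cong\mathcal{M}*(M_n(\mathbb{C})*\mathcal{N})_{1/n}$, which is the claim.

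The step I expect to be the main obstacle is the first ingredient of the last paragraph: the amalgamated free independence of $\mathcal{A}_1 = \mathcal{M}\otimes M_n(\mathbb{C})$ and $\mathcal{A}_2 = \mathcal{D}*\mathcal{N}$ over $\mathcal{D}$ inside $(\mathcal{M}\otimes M_n(\mathbb{C}))*\mathcal{N}$. One either invokes the associativity property of amalgamated free products as a black box, or verifies it by hand by expanding a general $\mathcal{D}$-centered alternating word in $\mathcal{A}_1$ and $\mathcal{A}_2$ into reduced words of the ambient scalar free product and checking that the $E_{\mathcal{D}}$-moments agree. The remaining pieces — the matrix-unit isomorphism $\mathcal{P}\cong M_n(e\mathcal{P}e)$, the corner computation $E_{\mathcal{D}}(x) = n\tau(x)\,e$, and the passage from $\mathcal{D}$-valued to scalar freeness — are routine. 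This gives a conceptual proof of Dykema's Theorem 1.2 in \cite{Dykema94}.
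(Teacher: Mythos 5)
The paper does not actually prove this statement: it is recorded as a \emph{Fact} and attributed to \cite[Theorem 1.2]{Dykema94}, so there is no in-paper argument to compare against. Judged on its own, your proof is correct, and it is essentially the standard route to Dykema's theorem: compress $\mathcal{P}=(\mathcal{M}\otimes M_n(\mathbb{C}))*\mathcal{N}$ by $e=1\otimes e_{11}$, identify the two corners $e\mathcal{A}_1e\cong\mathcal{M}$ and $e\mathcal{A}_2e\cong(M_n(\mathbb{C})*\mathcal{N})_{1/n}$, check generation via the matrix units, and descend from freeness with amalgamation over $\mathcal{D}=1\otimes M_n(\mathbb{C})$ to scalar freeness in the corner using $E_{\mathcal{D}}(exe)=n\tau(x)e$. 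All of these steps are sound, including the one you flag as the main obstacle: the statement that if $A$ and $B$ are free in $(M,\tau)$ and $D\subseteq A$ is a unital von Neumann subalgebra, then $A$ and $W^*(D\cup B)$ are free with amalgamation over $D$ with respect to the trace-preserving conditional expectation $E_D$. This is a genuine theorem in the literature (it goes back to Voiculescu's work on amalgamated free products and is stated explicitly, e.g., in Nica--Shlyakhtenko--Speicher's work on operator-valued distributions), so invoking it as a black box is legitimate; your fallback of verifying it by expanding $D$-centered alternating words is also viable but tedious.

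Two small points worth making explicit. First, since $\mathcal{M}$ and $\mathcal{N}$ are arbitrary tracial von Neumann algebras, $\mathcal{P}$ need not be a factor and $(\cdot)_{1/n}$ is not a priori well defined by a trace value alone; your proof (like Dykema's statement) should be read as compressing by the specific projections $1\otimes e_{11}$ and $e_{11}$, which is the correct interpretation of the Fact. Second, at the very end you need the uniqueness clause for tracial free products --- a tracial von Neumann algebra generated by two free subalgebras with prescribed (faithful, normal) traces is determined up to trace-preserving isomorphism --- to pass from ``generated by two free copies with the right traces'' to the isomorphism with $\mathcal{M}*(M_n(\mathbb{C})*\mathcal{N})_{1/n}$; this is standard but should be cited. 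With those caveats, the argument is complete.
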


Using Facts \ref{freedimfact} and \ref{dykemafact}, we obtain:

\begin{fact}\label{freeprodfact}
For any two II$_1$ factors $\cal M$ and $\cal N$ and any $n\geq 1$, we have 
$$(\cal M\otimes M_n(\bb C))*(\cal N\otimes M_n(\bb C))\cong (\cal M*\cal N*L(\bb F_{n^2-1}))\otimes M_n(\bb C).$$
\end{fact}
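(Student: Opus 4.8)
The plan is to deduce Fact~\ref{freeprodfact} from Facts~\ref{freedimfact} and \ref{dykemafact} by a direct algebraic manipulation, reducing both sides to a free group factor tensored with $M_n(\bb C)$ and matching free dimensions. First I would apply Fact~\ref{dykemafact} with the roles set up so that I can peel off the matrix algebra from one of the tensor factors: writing $\cal A := \cal M \otimes M_n(\bb C)$ and $\cal B := \cal N \otimes M_n(\bb C)$, I would like to compute $(\cal A * \cal B)_{1/n}$. Using Fact~\ref{dykemafact} (with $\cal M \rightsquigarrow \cal M$ and $\cal N \rightsquigarrow \cal B$), we get
\[
(\cal A * \cal B)_{\frac{1}{n}} = ((\cal M \otimes M_n(\bb C)) * \cal B)_{\frac{1}{n}} \cong \cal M * (M_n(\bb C) * \cal B)_{\frac{1}{n}}.
\]
Now $M_n(\bb C) * \cal B = M_n(\bb C) * (\cal N \otimes M_n(\bb C))$, and applying Fact~\ref{dykemafact} again in reverse, with the trivial algebra $\bb C$ in the first slot (so that $\bb C \otimes M_n(\bb C) = M_n(\bb C)$), we obtain $(M_n(\bb C) * (\cal N \otimes M_n(\bb C)))_{1/n} \cong \bb C * (M_n(\bb C) * \cal N)_{1/n} = (M_n(\bb C) * \cal N)_{1/n}$. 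So altogether $(\cal A * \cal B)_{1/n} \cong \cal M * (M_n(\bb C) * \cal N)_{1/n}$, and I would symmetrize / iterate to also pull out the matrix factor near $\cal N$, arriving at something of the form $\cal M * \cal N * (M_n(\bb C) * M_n(\bb C))_{1/n}$ after suitable bookkeeping — the point being that $(\cal A * \cal B)_{1/n}$ is a free product of $\cal M$, $\cal N$, and a ``pure matrix'' correction term.

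Next I would identify the correction term. The iterated use of Fact~\ref{dykemafact} should leave a factor of the form $(M_n(\bb C) * M_n(\bb C))_{1/n}$ or, after one more application of Fact~\ref{dykemafact} with $\cal M = \cal N = \bb C$, something isomorphic to an interpolated free group factor. By Fact~\ref{freedimfact}, $M_n(\bb C) * M_n(\bb C) \cong L(\bb F_t)$ with $t = 2(1 - \tfrac{1}{n^2}) = 2 - \tfrac{2}{n^2}$, and then compressing by $1/n$ and using Fact~\ref{interpfact}(3): $L(\bb F_t)_{1/n} \cong L(\bb F_{1 + n^2(t-1)}) = L(\bb F_{1 + n^2(1 - \tfrac{2}{n^2})}) = L(\bb F_{n^2 - 1})$. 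So the correction term is exactly $L(\bb F_{n^2-1})$, and we conclude $(\cal A * \cal B)_{1/n} \cong \cal M * \cal N * L(\bb F_{n^2 - 1})$. Amplifying both sides by $n$ (i.e.\ tensoring with $M_n(\bb C)$, which inverts the $1/n$ compression) then gives $\cal A * \cal B \cong (\cal M * \cal N * L(\bb F_{n^2-1})) \otimes M_n(\bb C)$, which is the claim.

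The main obstacle is getting the iterated application of Fact~\ref{dykemafact} to genuinely separate \emph{both} $\cal M$ and $\cal N$ from their matrix factors simultaneously, rather than just one of them; one has to be careful about the order of operations and about whether intermediate expressions like $(M_n(\bb C) * \cal N \otimes M_n(\bb C) * \cdots)_{1/n}$ are legitimate inputs to the fact (they are, since Fact~\ref{dykemafact} is stated for arbitrary tracial von Neumann algebras $\cal M, \cal N$). A clean way to organize this is to first prove the one-sided reduction $(\cal M \otimes M_n(\bb C)) * \cal N' \cong (\cal M * (M_n(\bb C) * \cal N')_{1/n}) \otimes M_n(\bb C)$ — valid for any $\cal N'$ by Fact~\ref{dykemafact} plus an amplification — and then apply it twice, once with $\cal N' = \cal N \otimes M_n(\bb C)$ to strip $\cal M$, and then observe the residual $M_n(\bb C) * \cal N \otimes M_n(\bb C)$ can have its own matrix factor stripped the same way with $\cal M \rightsquigarrow \bb C$. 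The free-dimension computation at the end is entirely routine given Facts~\ref{freedimfact} and \ref{interpfact}(3).
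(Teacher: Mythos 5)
Your overall strategy --- compress by $1/n$, peel the matrix factors off with Fact \ref{dykemafact}, identify the leftover $(M_n(\bb C)*M_n(\bb C))_{1/n}$ as $L(\bb F_{n^2-1})$ via Fact \ref{freedimfact} and Fact \ref{interpfact}(3), then tensor back with $M_n(\bb C)$ --- is exactly the intended derivation, and your free-dimension bookkeeping at the end is correct. However, the step where you strip the second matrix factor fails as written. Putting the trivial algebra $\bb C$ into the first slot of Fact \ref{dykemafact} yields only the tautology $(M_n(\bb C)*\cal B)_{1/n}\cong \bb C*(M_n(\bb C)*\cal B)_{1/n}$ with $\cal B=\cal N\otimes M_n(\bb C)$ left untouched; it does not let you replace $\cal N\otimes M_n(\bb C)$ by $\cal N$. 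In fact the isomorphism you display, $(M_n(\bb C)*(\cal N\otimes M_n(\bb C)))_{1/n}\cong(M_n(\bb C)*\cal N)_{1/n}$, is false in general: for $\cal N=\bb C$ and $n=2$ the left side is $(M_2(\bb C)*M_2(\bb C))_{1/2}\cong L(\bb F_{3/2})_{1/2}\cong L(\bb F_3)$, while the right side is $(M_2(\bb C))_{1/2}\cong\bb C$. The same flaw reappears at the end of your last paragraph, where you propose to strip the residual term ``with $\bb C$ in the first slot.'' Consequently your intermediate claim $(\cal A*\cal B)_{1/n}\cong\cal M*(M_n(\bb C)*\cal N)_{1/n}$ is also false in general (test it with $\cal N=M_2(\bb C)$).

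The repair is short and keeps you on the same path. After the first, correct application of Fact \ref{dykemafact}, you have $(\cal A*\cal B)_{1/n}\cong\cal M*\bigl(M_n(\bb C)*(\cal N\otimes M_n(\bb C))\bigr)_{1/n}$. Now use commutativity of the free product and apply Fact \ref{dykemafact} a second time with $\cal N$ in the tensor slot and $M_n(\bb C)$ as the other free factor: $\bigl((\cal N\otimes M_n(\bb C))*M_n(\bb C)\bigr)_{1/n}\cong\cal N*(M_n(\bb C)*M_n(\bb C))_{1/n}$. At this point your computation applies verbatim: $M_n(\bb C)*M_n(\bb C)\cong L(\bb F_{2-\frac{2}{n^2}})$ by Fact \ref{freedimfact}, so $(M_n(\bb C)*M_n(\bb C))_{1/n}\cong L(\bb F_{n^2-1})$ by Fact \ref{interpfact}(3), giving $(\cal A*\cal B)_{1/n}\cong\cal M*\cal N*L(\bb F_{n^2-1})$. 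Tensoring with $M_n(\bb C)$ then recovers the statement; this last step is legitimate because $\cal A*\cal B$ contains a unital copy of $M_n(\bb C)$, so it is recovered from its compression by a trace-$\frac1n$ projection of that copy by tensoring with $M_n(\bb C)$ --- the same convention under which the compressions in Fact \ref{dykemafact} are to be read for non-factors.
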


Proposition \ref{continuity} and Fact \ref{interpfact} together imply the following:

\begin{cor} \label{cor: continuity}
For any $r_k,r\in (1,\infty)$ with $r_k\to r$, we have $\prod_{\cal U}L(\bb F_{r_k})\cong L(\bb F_r)^{\cal U}$.
\end{cor}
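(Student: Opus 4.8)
The plan is simply to specialize Proposition \ref{continuity} to the case in which the base factor is itself a free group factor, using Fact \ref{interpfact}(3) to read off the relevant amplification parameters.

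Concretely, I would fix the II$_1$ factor $\cM := L(\bb F_r)$ and, for each $k$, set $t_k := \sqrt{\tfrac{r-1}{r_k-1}}$; this lies in $(0,\infty)$ because $r,r_k\in(1,\infty)$, and by Fact \ref{interpfact}(3) it satisfies $\cM_{t_k} = L(\bb F_r)_{t_k}\cong L(\bb F_{1+\frac{r-1}{t_k^2}})=L(\bb F_{r_k})$. Since $x\mapsto \sqrt{\tfrac{r-1}{x-1}}$ is continuous on $(1,\infty)$ and $r_k\to r$, we have $t_k\to 1$, so in particular $\lim_\cU t_k=1$ for every nonprincipal ultrafilter $\cU$ on $\bN$. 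Proposition \ref{continuity} then yields $\prod_\cU L(\bb F_r)_{t_k}\cong (L(\bb F_r)_1)^\cU = L(\bb F_r)^\cU$, and since an ultraproduct of the isomorphisms $L(\bb F_r)_{t_k}\cong L(\bb F_{r_k})$ is again an isomorphism, we get $\prod_\cU L(\bb F_{r_k})\cong \prod_\cU L(\bb F_r)_{t_k}$. Composing the two isomorphisms gives $\prod_\cU L(\bb F_{r_k})\cong L(\bb F_r)^\cU$, as desired.

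There is essentially no obstacle here: the entire weight of the argument rests on Proposition \ref{continuity}, and the corollary is exactly the instance of it in which the base algebra is $L(\bb F_r)$ and the limiting amplification parameter is $1$. The only points worth checking are that the parameters $t_k$ are genuinely positive reals (immediate from $r_k>1$), that $t_k\to 1$ (immediate from continuity of the parametrization in Fact \ref{interpfact}(3)), and the routine fact that ultraproducts preserve isomorphisms; the hypothesis of ordinary convergence $r_k\to r$ is more than enough to ensure $\lim_\cU r_k=r$ for the nonprincipal ultrafilter appearing in the statement.
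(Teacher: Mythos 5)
Your proof is correct and is exactly the argument the paper intends: the corollary is stated as an immediate consequence of Proposition \ref{continuity} together with Fact \ref{interpfact}(3), which is precisely your specialization with base algebra $L(\bb F_r)$, parameters $t_k=\sqrt{(r-1)/(r_k-1)}\to 1$, and an ultraproduct of the pointwise isomorphisms $L(\bb F_r)_{t_k}\cong L(\bb F_{r_k})$.
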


Given the previous corollary, the next question becomes natural:

\begin{question}\label{freeultraquestion}
If $\lim_\cal U r_k=\infty$, do we have $\prod_{\cal U} L(\bb F_{r_k})\cong L(\bb F_\infty)^\cal U$?
\end{question}

The following question at the other extreme was posed to us by David Sherman:

\begin{question}\label{anotherfreeultraquestion}
If $\lim_\cal U r_k=1$, what can one say about $\prod_{\cal U} L(\bb F_{r_k})$?
\end{question}

\section{Towards a dichotomy for free group factor elementary equivalence}

In direct analog to Theorem \ref{thm: fgf-alternative}, we have the following conjectured dichotomy for free group factor elementary equivalence:

\begin{conj} \label{thm: fgf-ee-alternative}
One of the following two statements must hold.
\begin{enumerate}
       \item $L(\F_r) \equiv L(\F_s)$ for all $1 < r \leq s \leq \infty$, and the first-order fundamental group of $L(\F_r)$ is $\bR_+$ for all $1 < r \leq \infty$.
       \item $L(\F_r) \not \equiv L(\F_s)$ for all $1 < r < s \leq \infty$, and the first-order fundamental group of $L(\F_r)$ is $\{1\}$ for all $1 < r < \infty$.
   \end{enumerate}
\end{conj}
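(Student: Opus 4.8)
The plan is to derive this dichotomy from the trichotomy (Theorem~A) together with a positive answer to Question~\ref{freeultraquestion} (equivalently, the conjecture stated just after Theorem~A), which I assume throughout, fixing once and for all a nonprincipal ultrafilter $\cU$ on $\bb N$. Applying Theorem~A splits the argument into its three alternatives, so it suffices to prove: (i) alternative (3) of Theorem~A is impossible; (ii) alternative (1) of Theorem~A implies alternative (1) of the present statement; and (iii) alternative (2) of Theorem~A implies alternative (2) of the present statement. Step (iii) uses only Theorem~A, while (i) and (ii) are where Question~\ref{freeultraquestion} is needed.

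For step (i), suppose the first-order fundamental group of every $L(\F_r)$ with $1<r<\infty$ equals $\alpha^{\bb Z}$ for some fixed $\alpha\in(1,\infty)$. Fix $r_0\in(1,\infty)$ and set $r_k:=1+(r_0-1)\alpha^{2k}$, so that $r_k\to\infty$. By Fact~\ref{interpfact}(3) we have $L(\F_{r_0})_{1/\alpha^k}\cong L(\F_{r_k})$, and since $1/\alpha^k\in\alpha^{\bb Z}=\cal F_{\fo}(L(\F_{r_0}))$ and amplification preserves elementary equivalence (Lemma~\ref{lemma: ee preserved under compression}), we get $L(\F_{r_k})\equiv L(\F_{r_0})$ for all $k$. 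Because the value of any sentence in an ultraproduct is the $\cU$-limit of its values in the factors, $\prod_{\cU}L(\F_{r_k})\equiv L(\F_{r_0})$; combining this with Question~\ref{freeultraquestion} yields $L(\F_\infty)\equiv L(\F_\infty)^{\cU}\cong\prod_{\cU}L(\F_{r_k})\equiv L(\F_{r_0})$. But elementarily equivalent II$_1$ factors have the same first-order fundamental group (again a consequence of Lemma~\ref{lemma: ee preserved under compression}), whereas $\cal F_{\fo}(L(\F_\infty))\supseteq\cal F(L(\F_\infty))=\bR_+$ while $\cal F_{\fo}(L(\F_{r_0}))=\alpha^{\bb Z}\subsetneq\bR_+$ --- a contradiction.

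For step (ii), assume alternative (1) of Theorem~A, so all $L(\F_r)$ with $1<r<\infty$ are mutually elementarily equivalent. Taking $r_k:=k$, each $L(\F_{r_k})\equiv L(\F_2)$, hence $\prod_{\cU}L(\F_{r_k})\equiv L(\F_2)$, and Question~\ref{freeultraquestion} gives $L(\F_\infty)\equiv L(\F_\infty)^{\cU}\cong\prod_{\cU}L(\F_{r_k})\equiv L(\F_2)$; thus $L(\F_r)\equiv L(\F_s)$ for all $1<r\le s\le\infty$, and $\cal F_{\fo}(L(\F_\infty))=\bR_+$ holds automatically, so alternative (1) of the present statement follows. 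For step (iii), assume alternative (2) of Theorem~A, so $L(\F_r)\not\equiv L(\F_s)$ for all finite $1<r<s$ and $\cal F_{\fo}(L(\F_r))=\{1\}$ for finite $r$. Since $\cal F_{\fo}(L(\F_\infty))=\bR_+\ne\{1\}$, the same observation that $\equiv$ preserves first-order fundamental groups forces $L(\F_r)\not\equiv L(\F_\infty)$ for every finite $r$, completing alternative (2).

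The main obstacle is Question~\ref{freeultraquestion} itself: without it I see no way to attach $L(\F_\infty)$ to the finite part of the family in alternative (1), nor to eliminate the spurious alternative (3) of Theorem~A. All the remaining ingredients --- Fact~\ref{interpfact}, Lemma~\ref{lemma: ee preserved under compression}, and the computation of the theory of an ultraproduct --- are routine; the genuine content would be a free-probabilistic proof that $\prod_{\cU}L(\F_{r_k})\cong L(\F_\infty)^{\cU}$ whenever $r_k\to\infty$, presumably along the lines of the original constructions of Dykema and R\^adulescu.
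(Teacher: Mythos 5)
Your conditional derivation is essentially the paper's own route: the statement is left as a conjecture there, and the paper obtains it exactly as you do, by combining the trichotomy with an assumed positive answer to Question~\ref{freeultraquestion} (via Lemma~\ref{infinity} and the lemma that $\cal F\neq\{1\}$ together with that conjecture forces $L(\F_r)\equiv L(\F_\infty)$ for all finite $r$). Your case analysis, including using preservation of the first-order fundamental group under elementary equivalence to rule out alternative (3) and to separate $L(\F_\infty)$ in alternative (2), is correct and matches the paper's argument in substance.
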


In this section, we show how a general trichotomy always holds and explain under what additional hypotheses we can establish Conjecture \ref{thm: fgf-ee-alternative} above. 

\subsection{A trichotomy}

While we are currently unable to establish the desired dichotomy appearing in Conjecture \ref{thm: fgf-ee-alternative}, we can prove an interesting trichotomy for the first-order fundamental group of the interpolated free group factors.

% \begin{lem} \label{lemma: fo fund closed}
%     If $x_n \rightarrow x$ in $\bR_+$, then $L(\F_x) \equiv \prod_\cU L(\F_{x_n})$.
% \end{lem}

% \begin{proof}
% For $t_n = \frac{x_n - 1}{x - 1}$, we have $L(\F_{x_n})_{t_n} = L(\F_x)$. As $t_n \rightarrow 1$, we can assume $t_n < 2$ for all $n$. Working inside of $L(\F_{x+1})$, we can find projection $p_n$ of the right trace so that
% $A_n := p_n L(\F_{x_n}) p_n = L(\F_x)$ for $t_k \leq 1$, and
% $A_n := p_n M_2(L(\F_{x_n})) p_n = L(\F_x)$ for $t_n > 1$.
% Let $\theta_n : A_n \rightarrow L(\F_x)$ be an isomorphism for each $n$. Then $\prod_\mathcal{U} \theta_n : \prod_\mathcal{U} A_n \rightarrow L(\F_x)^\mathcal{U}$ is an isomorphism, and $\prod_\cU A_n \cong \prod_\cU L(\F_{x_n})$.
% \end{proof}

\begin{thm} \label{thm: trichotomy}
Exactly one of the following holds:
\begin{enumerate}
    \item $L(\F_r) \equiv L(\F_s)$ for all $1 < r \leq s < \infty$, and $\mathcal{F}_{fo}(L(\F_r)) = \bR_+$ for all $1 < r \leq \infty$.
    \item $L(\F_r) \not \equiv L(\F_s)$ for all $1 < r < s < \infty$, and $\mathcal{F}_{fo}(L(\F_r)) = \{1\}$ for all $1 < r < \infty$.
    \item There is $\alpha\in (1,\infty)$ such that $\mathcal{F}_{fo}(L(\F_r))=\alpha^\bb Z$ for all $1<r < \infty$. 
\end{enumerate}
\end{thm}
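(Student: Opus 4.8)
The plan is to recognize the trichotomy as the classification of the closed subgroups of the multiplicative group $\bR_+$, applied to a single group, namely the common first-order fundamental group of the finite interpolated free group factors.

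First I would check that $G := \cal F_{fo}(L(\F_r))$ does not depend on $r\in(1,\infty)$. By Fact~\ref{interpfact}(3), any two of the factors $L(\F_r)$ and $L(\F_s)$ with $r,s$ finite are amplifications of one another, say $L(\F_s)\cong L(\F_r)_b$; using $(\cM_c)_d\cong\cM_{cd}$ together with the fact that amplification by any positive real preserves elementary equivalence (Lemma~\ref{lemma: ee preserved under compression}), amplifying the relation $L(\F_r)_t\equiv L(\F_r)$ by $b$ yields $L(\F_s)_t\equiv L(\F_s)$, and amplifying the latter by $b^{-1}$ recovers the former, so $\cal F_{fo}(L(\F_r))=\cal F_{fo}(L(\F_s))$. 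I would then record that $G$ is a \emph{closed} subgroup of $\bR_+$; this is from \cite{GoldbringHart16}, but it also follows from Proposition~\ref{continuity}: if $t_k\in G$ and $t_k\to t$, then for any nonprincipal ultrafilter $\cU$ on $\bb N$ we get $(L(\F_r)_t)^{\cU}\cong\prod_\cU L(\F_r)_{t_k}$, and the right-hand side is elementarily equivalent to $L(\F_r)$ because each $L(\F_r)_{t_k}$ is, so $t\in G$.

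Next I would translate membership in $G$ into elementary equivalences among the free group factors. For $1<r<s<\infty$, setting $t:=(\frac{r-1}{s-1})^{1/2}\in(0,1)$, Fact~\ref{interpfact}(3) gives $L(\F_r)_t\cong L(\F_s)$, so $L(\F_r)\equiv L(\F_s)$ iff $t\in G$; conversely every $t\in G\cap(0,1)$ arises this way, by fixing any $r$ and taking $s:=1+\frac{r-1}{t^2}$. Thus $G=\{1\}$ says exactly that no two distinct finite free group factors are elementarily equivalent, while $G=\bR_+$ forces $L(\F_r)\equiv L(\F_s)$ for all finite $r,s$. Now I would invoke the classification of closed subgroups of $(\bR_+,\cdot)$ — transported by $\log$ to that of $(\bR,+)$, these are precisely $\{1\}$, $\alpha^{\bb Z}$ for a unique $\alpha>1$, and $\bR_+$ — and read off the three cases, which by the translation above are exactly items (1), (2), and (3); they are mutually exclusive because the subgroups $\{1\}$, $\alpha^{\bb Z}$ ($\alpha>1$), and $\bR_+$ are pairwise distinct. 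In the case $G=\bR_+$ one moreover has $\cal F_{fo}(L(\F_\infty))=\bR_+$ for free, since Radulescu's theorem gives $\bR_+=\cal F(L(\F_\infty))\subseteq\cal F_{fo}(L(\F_\infty))$.

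I do not anticipate a genuine obstacle here: once the amplification formula of Fact~\ref{interpfact}(3) is available, the statement is a bookkeeping consequence of that formula together with the structure theory of closed subgroups of $\bR_+$. The only non-formal ingredient is the closedness of $G$, which is already known from \cite{GoldbringHart16} (and re-derivable from Proposition~\ref{continuity} as above); the mild care required is in checking that $G$ is independent of $r$ and in matching the ``ratios'' in $(0,1)$ with pairs $r<s$, both of which are routine.
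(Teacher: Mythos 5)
Your proposal is correct and follows essentially the same route as the paper: the paper's ``ratio'' argument is exactly the observation that $t=((r-1)/(s-1))^{1/2}$ lies in the common first-order fundamental group, and its case analysis (no ratios, all ratios powers of one, or two independent ratios generating a dense--hence, by Proposition~\ref{continuity}, closed--subgroup) is precisely the classification of closed subgroups of $\bR_+$ that you invoke, with closedness supplied by \cite{GoldbringHart16}/Proposition~\ref{continuity} in both treatments. The only cosmetic difference is that you verify up front that $\mathcal{F}_{\mathrm{fo}}(L(\F_r))$ is independent of $r\in(1,\infty)$, whereas the paper records this as a consequence of the theorem; the handling of $L(\F_\infty)$ via R\u{a}dulescu's theorem is the same.
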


\begin{proof}
    Suppose that $L(\F_r)\equiv L(\F_s)$ for some $1 < r < s < \infty$, and set $t:= (\frac{r-1}{s-1})^{1/2}$. Let us call such a $t$ a \textbf{ratio}.
Then for any $x,y\in (1,\infty)$ for which $t = (\frac{x-1}{y-1})^{1/2}$, we have $L(\F_x)\equiv L(\F_y)$.

Further suppose that we have two ratios $t_1$ and $t_2$ that are not simply powers of each other.  Then the multiplicative subgroup $G$ they generate is dense in $\bb R_+$. Using Proposition \ref{continuity}, we see that $G$ must also be closed, and hence all of $\bR_+$. 
% Given any $x \in \bb R_+$, there is a sequence $x_n$ from $G$ such that $x_n\to x$. By Lemma \ref{lemma: fo fund closed}, $L(\F_x) \equiv \prod_{\cal U} L(\F_{x_n})$.  But $L(\F_y)\equiv L(\F_z)$ for any $y,z \in G$ by the previous paragraph, whence $L(\F_x)$ also has the same theory as the $L(\F_y)$'s for $y\in G$.
\end{proof}

A consequence of the previous theorem is that all interpolated free group factors have the same first-order fundamental group, which we will denote by $\mathcal{F}$ in the remainder of this paper.

\subsection{Including $L(\bb F_\infty)$}

Unlike its classical counterpart, $L(\F_\infty)$ is notably missing in Theorem \ref{thm: trichotomy} above.  We discuss a couple of hypotheses that would remedy this fact.

\begin{lem}\label{infinity}
If $L(\bb F_r)\equiv L(\bb F_\infty)$ for some $r\in (1,\infty)$, then $L(\bb F_r)\equiv L(\bb F_s)\equiv L(\bb F_\infty)$ for all $s\in (1,\infty)$.
\end{lem}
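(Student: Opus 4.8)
The plan is to combine two ingredients already available in the excerpt: the compression formula for interpolated free group factors (Fact~\ref{interpfact}(3)), and the fact that elementary equivalence is preserved under amplifications (Lemma~\ref{lemma: ee preserved under compression}). The key observation is that every $L(\F_s)$ with $s\in(1,\infty)$ is realized as an amplification of the fixed factor $L(\F_r)$, while $L(\F_\infty)$ is insensitive to amplification since its fundamental group is all of $\bR_+$.

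Concretely, I would fix $r\in(1,\infty)$ with $L(\F_r)\equiv L(\F_\infty)$ and let $s\in(1,\infty)$ be arbitrary. Put $t:=\left(\tfrac{r-1}{s-1}\right)^{1/2}\in\bR_+$, so that by Fact~\ref{interpfact}(3) we have $L(\F_r)_t\cong L(\F_{1+(r-1)/t^2})=L(\F_s)$. Next, applying Lemma~\ref{lemma: ee preserved under compression} to $L(\F_r)\equiv L(\F_\infty)$ with amplification parameter $t$ yields $L(\F_r)_t\equiv L(\F_\infty)_t$. Since the fundamental group of $L(\F_\infty)$ is $\bR_+$ (Radulescu; see the discussion following Fact~\ref{interpfact}), we have $L(\F_\infty)_t\cong L(\F_\infty)$. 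Chaining these gives $L(\F_s)\cong L(\F_r)_t\equiv L(\F_\infty)_t\cong L(\F_\infty)$. As $s\in(1,\infty)$ was arbitrary, $L(\F_s)\equiv L(\F_\infty)$ for all such $s$, and transitivity of $\equiv$ then gives $L(\F_r)\equiv L(\F_s)\equiv L(\F_\infty)$ for all $r,s\in(1,\infty)$, as claimed.

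There is no serious obstacle here; it is a direct consequence of the two cited facts. The only point requiring (minor) care is that the amplification parameter $t$ above may be any positive real, not necessarily $\ge 1$ or $\le 1$ — but this is exactly the generality in which Lemma~\ref{lemma: ee preserved under compression} is stated, and the map $t\mapsto 1+(r-1)/t^2$ is a bijection of $\bR_+$ onto $(1,\infty)$, so every finite $s>1$ is indeed hit.
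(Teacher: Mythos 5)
Your proof is correct and follows essentially the same route as the paper: choose $t$ with $L(\F_r)_t\cong L(\F_s)$, apply the preservation of elementary equivalence under amplification, and use that $L(\F_\infty)$ has full fundamental group to absorb the amplification. The only difference is that you write out the explicit value of $t$, which the paper leaves implicit.
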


\begin{proof}
Fix $s\in (1,\infty)$ and take $t\in \bb R_+$ such that $L(\bb F_r)_t\cong L(\bb F_s)$.  Using Lemma \ref{lemma: ee preserved under compression} and the fact that $L(\bb F_\infty)$ has full fundamental group, we have that
$$L(\bb F_s)\cong L(\bb F_r)_t\equiv L(\bb F_\infty)_t\cong L(\bb F_\infty).\qedhere$$
\end{proof}

Notice that if $L(\bb F_r)\equiv L(\bb F_\infty)$ for all $r\in (1,\infty)$, then Question \ref{freeultraquestion} has a positive answer.  On the other hand: 

\begin{lem}
If Question \ref{freeultraquestion} has a positive answer and $\cal F \not=\{1\}$, then $L(\bb F_r)\equiv L(\bb F_\infty)$ for all $r\in (1,\infty)$.
\end{lem}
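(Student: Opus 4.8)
The plan is to combine the non-triviality of $\mathcal F$ with Proposition~\ref{continuity} (and its free-product variant in Remark~\ref{generalizedcontinuity}, if needed) to produce a sequence $r_k\to\infty$ whose ultraproduct is elementarily equivalent to $L(\bb F_r)$ for a fixed finite $r$, and then invoke the positive answer to Question~\ref{freeultraquestion} to identify that same ultraproduct with $L(\bb F_\infty)^{\cal U}$. Fix any $r\in(1,\infty)$. Since $\mathcal F\neq\{1\}$, by Theorem~\ref{thm: trichotomy} we are in case (1) or case (3); in particular there is some real $\alpha>1$ with $\alpha\in\mathcal F$, i.e.\ $L(\bb F_r)_\alpha\equiv L(\bb F_r)$, and hence $L(\bb F_r)_{\alpha^n}\equiv L(\bb F_r)$ for every $n\in\bb N$ (using that $\mathcal F$ is a group, or simply iterating Lemma~\ref{lemma: ee preserved under compression}).

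First I would translate these compressions back into free group factors via Fact~\ref{interpfact}(3): $L(\bb F_r)_{t}\cong L(\bb F_{1+\frac{r-1}{t^2}})$, so for $t=\alpha^{-n}$ (a compression by a factor less than $1$, which only makes the index larger) we get $L(\bb F_r)_{\alpha^{-n}}\cong L(\bb F_{s_n})$ with $s_n=1+(r-1)\alpha^{2n}\to\infty$ as $n\to\infty$. Wait --- I should be careful about the direction: I want the \emph{index} to go to infinity, and amplifying/compressing $L(\bb F_r)$ by $t$ gives index $1+\frac{r-1}{t^2}$, which is large exactly when $t$ is small. So set $r_k := 1+(r-1)\alpha^{2k}$; then by Lemma~\ref{lemma: ee preserved under compression}, $L(\bb F_{r_k})\cong L(\bb F_r)_{\alpha^{-k}}\equiv L(\bb F_r)_{\alpha^{-k}}$... more to the point, I claim $L(\bb F_{r_k})\equiv L(\bb F_r)$ for every $k$: indeed $L(\bb F_{r_k})\cong L(\bb F_r)_{\alpha^{-k}}$, and since $\alpha^{k}\in\mathcal F_{\mathrm{fo}}(L(\bb F_r))=\mathcal F$ we have $L(\bb F_r)_{\alpha^{-k}}\equiv L(\bb F_r)$ (the group $\mathcal F$ contains $\alpha^{-k}$ as well). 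Hence for any nonprincipal ultrafilter $\cal U$ on $\bb N$, $\prod_{\cal U}L(\bb F_{r_k})\equiv L(\bb F_r)$; more precisely, since elementary equivalence is detected by isomorphism of some ultrapowers and all the $L(\bb F_{r_k})$ share the theory of $L(\bb F_r)$, the ultraproduct $\prod_{\cal U}L(\bb F_{r_k})$ is itself elementarily equivalent to $L(\bb F_r)$, as its theory is the $\cal U$-limit of the constant sequence of theories.

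Next, since $r_k\to\infty$ we have $\lim_{\cal U}r_k=\infty$, so the positive answer to Question~\ref{freeultraquestion} gives $\prod_{\cal U}L(\bb F_{r_k})\cong L(\bb F_\infty)^{\cal U}$. Combining, $L(\bb F_\infty)^{\cal U}\equiv\prod_{\cal U}L(\bb F_{r_k})\equiv L(\bb F_r)$, and since any von Neumann algebra is elementarily equivalent to its ultrapowers, $L(\bb F_\infty)\equiv L(\bb F_r)$. As $r\in(1,\infty)$ was arbitrary, this is exactly the claim (and then Lemma~\ref{infinity} even gives the full statement $L(\bb F_r)\equiv L(\bb F_s)\equiv L(\bb F_\infty)$ for all $r,s$, though that is already implied).

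The only genuinely delicate point is the bookkeeping at the level of theories: one must be sure that $\prod_{\cal U}L(\bb F_{r_k})\equiv L(\bb F_r)$, which follows from \L o\'s's theorem since each factor $L(\bb F_{r_k})$ satisfies exactly the sentences in $\mathrm{Th}(L(\bb F_r))$ --- a continuous sentence $\sigma$ has value $\sigma^{\prod_{\cal U}L(\bb F_{r_k})}=\lim_{\cal U}\sigma^{L(\bb F_{r_k})}=\sigma^{L(\bb F_r)}$. Everything else is a direct application of results already in the excerpt; there is no real obstacle beyond confirming that $\mathcal F\neq\{1\}$ does supply a real ratio $\alpha>1$ with arbitrarily large powers, which is immediate from Theorem~\ref{thm: trichotomy} (in case (1), $\mathcal F=\bb R_+$; in case (3), $\mathcal F=\alpha^{\bb Z}$).
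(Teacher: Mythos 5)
Your proof is correct and follows essentially the same route as the paper: pick a nontrivial element of $\mathcal F$, compress $L(\bb F_r)$ by its powers so the indices $r_k\to\infty$, note each $L(\bb F_{r_k})\equiv L(\bb F_r)$ and apply \L o\'s, then identify $\prod_{\cal U}L(\bb F_{r_k})$ with $L(\bb F_\infty)^{\cal U}$ via the assumed positive answer to Question \ref{freeultraquestion}. The only cosmetic difference is that the paper runs the argument just for $L(\bb F_2)$ with $t\in\mathcal F\cap(0,1)$ and then extends to all $r$ via Lemma \ref{infinity}, while you carry out the same computation for arbitrary $r$ directly.
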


\begin{proof}
Fix $t\in \cal F \cap (0,1)$ and notice that $$L(\bb F_2) \equiv L(\bb F_2)^\cU \equiv \prod_\cal U L(\bb F_2)_{t^k}\cong  \prod_{\cal U} L(\bb F_{1+\frac{1}{t^{2k}}}) \cong L(\bb F_\infty)^\cal U,$$
where the first and second equivalences follows from \L os' theorem and the last isomorphism follows from the assumption that Question \ref{freeultraquestion} has a positive answer.
\end{proof}

We now examine one other avenue from which we can obtain that all interpolated free group factors, including $L(\F_\infty)$, are elementarily equivalent. It requires the notion of a standard embedding of free group factors, taken from \cite[Definition 4.1]{Dykema93FreeDim}.

\begin{defn}
    Let $r \leq r'$. Then $\psi: L(\F_r) \rightarrow L(\F_{r'})$ is a \textbf{standard embedding} if:  for some tracial $W^*$-probability space $(\cM, \tau)$, with $\cM$ containing a copy of $\mathcal{R}$ and a semicircular family $\omega = \{X^t \mid t \in T\}$ such that $\mathcal{R}$ and $ \omega$ are free, there exist subsets $S \subseteq S' \subseteq T$, projections $p_s \in \cal P(\mathcal{R})$ for $s \in S'$, and isomorphisms $\alpha: L(\F_r) \rightarrow (\mathcal{R} \cup \{p_s X^s p_s \mid s \in S\})''$ and $\beta: L(\F_{r'}) \rightarrow (\mathcal{R} \cup \{p_s X^s p_s \mid s \in S' \})''$ such that $\psi = \beta^{-1} \circ i \circ \alpha$, where $i$ is the inclusion map. In other words, the following diagram commutes: \\
    \begin{center}
    \begin{tikzcd}
            (\mathcal{R} \cup \{p_s X^s p_s\}_{s \in S})'' \arrow[rr, "i"]                         &  &  (\mathcal{R} \cup \{p_s X^s p_s\}_{s \in S'})''             \\
            L(\F_r) \arrow[u, "\alpha"] \arrow[rr, "\psi"'] &  & L(\F_{r'}) \arrow[u, "\beta"]
    \end{tikzcd}
    \end{center}
\end{defn}

\begin{lem} \label{lemma: induced standard emb}
If $\alpha:L(\bb F_r)\to L(\bb F_s)$ is standard, $t\in (0,1)$ is such that $L(\bb F_r)_t\cong L(\bb F_s)$, and $p\in \cal P(L(\bb F_r))$ is a projection with $\tau(p)=t$, then $\alpha|pL(\bb F_r)p$ is a standard embedding that induces a standard embedding $\alpha':L(\bb F_s)\to \alpha(p)L(\bb F_s)\alpha(p)$.  If $\alpha$ is elementary, then so is $\alpha'$.
\end{lem}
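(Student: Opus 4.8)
The statement has two parts: first, that restricting a standard embedding $\alpha : L(\bb F_r) \to L(\bb F_s)$ to a corner $pL(\bb F_r)p$ (with $\tau(p)=t$ and $L(\bb F_r)_t \cong L(\bb F_s)$) is again standard and "induces" a standard embedding $\alpha' : L(\bb F_s) \to \alpha(p)L(\bb F_s)\alpha(p)$; second, that if $\alpha$ is elementary then so is $\alpha'$. The elementarity part is essentially free given Lemma \ref{compresselememb}: if $\alpha$ is elementary, then $\alpha|pL(\bb F_r)p : pL(\bb F_r)p \to \alpha(p)L(\bb F_s)\alpha(p)$ is elementary, and $\alpha'$ is obtained from $\alpha|pL(\bb F_r)p$ by composing on the source and target with the fixed isomorphisms $L(\bb F_s)\cong pL(\bb F_r)p$ and $\alpha(p)L(\bb F_s)\alpha(p) \cong \alpha(p)L(\bb F_s)\alpha(p)$ (the latter trivial), which are isomorphisms and hence elementary; composition of elementary maps is elementary. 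So the real content is entirely in showing the compressed map is standard.

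**The standardness argument.** Here I would work directly from Dykema's definition. Since $\alpha$ is standard, fix the ambient $W^*$-probability space $(\cM,\tau)$ with a copy of $\cR$ and a semicircular family $\omega = \{X^u \mid u \in T\}$ free from $\cR$, subsets $S \subseteq S' \subseteq T$, projections $p_u \in \cal P(\cR)$, and isomorphisms $\alpha_0 : L(\bb F_r) \to (\cR \cup \{p_u X^u p_u\}_{u\in S})''$ and $\beta_0 : L(\bb F_s) \to (\cR \cup \{p_u X^u p_u\}_{u\in S'})''$ with $\alpha = \beta_0^{-1}\circ i \circ \alpha_0$. The projection $p \in \cal P(L(\bb F_r))$ corresponds under $\alpha_0$ to a projection $q := \alpha_0(p)$ in $\cN_1 := (\cR \cup \{p_u X^u p_u\}_{u\in S})''$ with the same trace $t$. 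The key point is that compressing this standard-embedding picture by $q$ again yields a standard-embedding picture: one needs a corner $q\cM q$ (with renormalized trace) which still contains a copy of $\cR$ and a semicircular family free from it, exhibiting $q\cN_1 q \subseteq q\cN_2 q$ (where $\cN_2 := (\cR \cup \{p_u X^u p_u\}_{u\in S'})''$) as a standard embedding. This is exactly the kind of compression stability that underlies Dykema's and R\u{a}dulescu's free-dimension calculus — compressing a free-group-factor configuration by a projection from the hyperfinite part preserves the "standard" free coordinate structure. I would cite the relevant compression lemmas from \cite{Dykema93FreeDim} (the same ones Dykema uses to prove standard embeddings compose and behave well under amplification) rather than re-deriving them; the point to verify is that $q$ can be taken inside (a copy of) $\cR$, which holds because $\cR \subseteq \cN_1$ and $\cN_1$ is a free group factor so its projections of trace $t$ are all conjugate, letting us arrange $q \in \cal P(\cR)$.

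**Getting $\alpha'$.** Once $\alpha|pL(\bb F_r)p$ is recognized as a standard embedding $pL(\bb F_r)p \to q\cN_2 q = \alpha(p)L(\bb F_s)\alpha(p)$, I use the identifications $pL(\bb F_r)p \cong L(\bb F_r)_t \cong L(\bb F_s)$ (Fact \ref{interpfact}(3) with the hypothesis $L(\bb F_r)_t \cong L(\bb F_s)$) and $\alpha(p)L(\bb F_s)\alpha(p) = L(\bb F_s)_{t'}$ for the appropriate $t'$, which is again some interpolated free group factor; transporting the standard embedding along these isomorphisms gives $\alpha'$. I should check that transporting a standard embedding along isomorphisms of the source and target still yields a standard embedding — this is immediate from the definition since we may compose the isomorphisms $\alpha_0,\beta_0$ in the defining diagram with the given ones. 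Finally, elementarity of $\alpha'$ follows as described in the first paragraph.

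**Main obstacle.** The crux is the compression-stability of the standard-embedding configuration: showing that $q \cM q$, with $q$ a projection of trace $t$ coming from the hyperfinite part, again carries a copy of $\cR$ together with a free semicircular family realizing the compressed inclusion as standard, and that $q$ may indeed be chosen in $\cR$. This is where I would lean most heavily on the machinery of \cite{Dykema93FreeDim}; everything else (the elementarity transfer, the bookkeeping with the isomorphisms) is routine.
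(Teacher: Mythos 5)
Your proposal is correct and follows essentially the same route as the paper: the standardness of the compression is exactly Dykema's compression result (\cite[Proposition 4.2]{Dykema93FreeDim}), $\alpha'$ is obtained by composing with an isomorphism $L(\bb F_s)\to pL(\bb F_r)p$, and elementarity is transferred via Lemma \ref{compresselememb}. Your extra discussion about arranging the projection to lie in $\cR$ is not actually required by Dykema's statement, but it is harmless and does not change the argument.
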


\begin{proof}
The fact that $\alpha|pL(\bb F_r)p$ is standard is  \cite[Proposition 4.2]{Dykema93FreeDim}.  Now compose with an isomorphism $L(\bb F_s)\to pL(\bb F_r)p$ to obtain $\alpha'$.  That $\alpha'$ is elementary if $\alpha$ is elementary is a consequence of Lemma \ref{compresselememb}.
\end{proof}

\begin{thm}
Suppose that there is an embedding $\alpha:L(\bb F_r)\to L(\bb F_s)$ that is both standard and elementary.  Then $L(\bb F_x)\equiv L(\bb F_\infty)$ for all $x\in (1,\infty)$.
\end{thm}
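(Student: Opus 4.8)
The plan is to iterate the elementary embedding $\alpha : L(\bb F_r) \to L(\bb F_s)$ along the chain of standard embeddings connecting all the interpolated free group factors, and to use the fact that a standard embedding $L(\bb F_r) \hookrightarrow L(\bb F_\infty)$ exists and that $L(\bb F_\infty)$ is the ``inductive limit'' (in the $W^*$ sense) of the finite-parameter free group factors along standard embeddings. First I would use Lemma \ref{lemma: induced standard emb}: given the standard elementary embedding $\alpha : L(\bb F_r) \to L(\bb F_s)$ with $r < s$, compressing by a projection $p$ of the appropriate trace $t$ with $L(\bb F_r)_t \cong L(\bb F_s)$ produces a new standard \emph{and} elementary embedding $\alpha' : L(\bb F_s) \to \alpha(p) L(\bb F_s) \alpha(p) \cong L(\bb F_{s'})$ for a strictly larger parameter $s'$. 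Because elementary embeddings compose and preserve the full first-order theory, iterating $\alpha, \alpha', \alpha'', \dots$ yields a sequence of parameters $r < s < s' < s'' < \cdots$ with $L(\bb F_r) \equiv L(\bb F_s) \equiv L(\bb F_{s'}) \equiv \cdots$, all witnessed by standard elementary embeddings.

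Next I would argue that this sequence of parameters tends to $\infty$, so that (after invoking the trichotomy, Theorem \ref{thm: trichotomy}, which forces $L(\bb F_x) \equiv L(\bb F_y)$ for all finite $x, y$ as soon as two non-commensurable ratios appear — or more simply, observing that the ratios produced by iteration are all equal, namely $(\tfrac{r-1}{s-1})^{1/2}$, so I need to track parameters directly) we know $L(\bb F_x) \equiv L(\bb F_y)$ for all $x, y \in (1,\infty)$. The remaining point is to bring $L(\bb F_\infty)$ into the picture. Here I would use Dykema's structural description underlying the notion of standard embedding: $L(\bb F_\infty)$ can be realized as $(\mathcal{R} \cup \{p_s X^s p_s \mid s \in T\})''$ for a countably infinite index set $T$, and is the WOT-closure of the increasing union of the subalgebras $(\mathcal{R} \cup \{p_s X^s p_s \mid s \in S_n\})''$ over finite $S_n \uparrow T$, with the inclusions being standard embeddings $L(\bb F_{r_n}) \hookrightarrow L(\bb F_{r_{n+1}})$ where $r_n \to \infty$. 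Composing the iterated elementary embeddings along a cofinal subsequence of this chain produces an elementary embedding (or at least an elementary chain) into $L(\bb F_\infty)$; since the direct limit of an elementary chain is an elementary extension of each term, we get $L(\bb F_r) \prec L(\bb F_\infty)$, hence $L(\bb F_r) \equiv L(\bb F_\infty)$, and then Lemma \ref{infinity} upgrades this to $L(\bb F_x) \equiv L(\bb F_\infty)$ for all $x \in (1,\infty)$.

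The main obstacle I anticipate is the continuity/colimit step: I must verify that the iterated compositions of the standard elementary embeddings are compatible with (i.e. can be arranged to land inside) an \emph{increasing} exhausting chain of subfactors of a fixed copy of $L(\bb F_\infty)$, and that the resulting map out of the colimit is genuinely elementary. This requires knowing that the Tarski--Vaught style ``union of an elementary chain'' argument applies in the continuous (metric) setting for II$_1$ factors — which it does, but one must be careful that the chain is an elementary chain (each inclusion elementary, not merely each finite factor elementarily equivalent to the next) and that $\bigcup_n L(\bb F_{r_n})$ is WOT-dense in $L(\bb F_\infty)$ so that the colimit of the chain is literally $L(\bb F_\infty)$. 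Matching the parameters $r < s < s' < \cdots$ produced by Lemma \ref{lemma: induced standard emb} to the parameters $r_n$ of a concrete exhausting chain of $L(\bb F_\infty)$ may require passing to a subsequence and re-indexing, and checking that standardness is preserved under these reindexings via \cite[Proposition 4.2]{Dykema93FreeDim} and the transitivity of standard embeddings; this bookkeeping, rather than any deep new idea, is where the real work lies.
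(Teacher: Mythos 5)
Your proposal is correct and follows essentially the same route as the paper: iterate Lemma \ref{lemma: induced standard emb} to produce an elementary chain of standard embeddings $L(\bb F_{r_i})\to L(\bb F_{r_{i+1}})$ with $r_i\to\infty$, identify the inductive limit with $L(\bb F_\infty)$, conclude $L(\bb F_r)\equiv L(\bb F_\infty)$ via the union-of-an-elementary-chain argument, and finish with Lemma \ref{infinity}. The only difference is that the paper cites \cite[Proposition 4.3(ii)]{Dykema93FreeDim} outright for the identification of the limit with $L(\bb F_\infty)$, so the exhausting-chain bookkeeping you anticipate (and your unneeded intermediate assertion that all finite-parameter factors are already elementarily equivalent, which the equal ratios do not yet give) can simply be bypassed.
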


\begin{proof}
Starting with the given embedding $\alpha$, one can iterate the previous lemma to get an inductive sequence of standard elementary embeddings $L(\bb F_{r_i})\to L(\bb F_{r_{i+1}})$ with $\lim_i r_i=\infty$.  By \cite[Proposition 4.3(ii)]{Dykema93FreeDim}, the limit is isomorphic to $L(\bb F_\infty)$.  Moreover, since the chain is elementary, we have that $L(\bb F_r)\equiv L(\bb F_\infty)$.  The theorem now follows using Lemma \ref{infinity}.
\end{proof}

In particular, since the natural inclusions $L(\bb F_r)\to L(\bb F_r)*L(\bb F_s)$ are standard \cite[Proposition 4.4(i)]{Dykema93FreeDim} (e.g. the natural inclusions $L(\bb F_m)\to L(\bb F_n)$ for integers $m\leq n$), if any of them are elementary, we get that $L(\bb F_x)\equiv L(\bb F_\infty)$ for all $x\in (1,\infty)$.

\subsection{Free Products That Preserve Elementary Equivalence} \label{sec: dichotomy conj}
We record here that the free group factor alternative for elementary equivalence can be obtained in much the same way as in \cite[Theorem 5.1]{MunsterLectures} if taking free products with a fixed tracial von Neumann algebra preserves first-order theories.

\begin{question}
    Does taking the free product with a fixed tracial von Neumann algebra preserve first-order theories?
    That is, if we have two tracial von Neumann algebras $\cM \equiv \cN$, and a third tracial von Neumann algebra $\mathcal{A}$, is it true that $\cM * \mathcal{A} \equiv \cN * \mathcal{A}$?
\end{question}

We obtain the free group factor alternative for elementary equivalence 
in the case that the previous question has a positive resolution (in particular for $\cal{A} = L(\F_{2 - \epsilon})$).

\begin{thm} 
If taking the free product with $L(\F_{2 - \epsilon})$ preserves elementary equivalence for all $\epsilon\in (0,1)$, then Conjecture \ref{thm: fgf-ee-alternative} has a positive solution.

\begin{proof}
%   We follow along with the same argument as the proof given in \cite[Theorem 5.1]{MunsterLectures}, adapting to elementary equivalence as needed.

   \noindent 
   Since $L(\F_r) * L(\F_{r'}) \cong L(\F_{r + r'})$ for all $1 < r, r' \leq \infty$, the same holds true of elementary equivalence:
   \begin{equation} \label{eqn: addition for ee}
       L(\F_r)* L(\F_{r'}) \equiv L(\F_{r + r'}), \text{ for } 1 < r, r' \leq \infty.
   \end{equation}
   Similarly, by Fact \ref{interpfact} above, we have
   \begin{equation*}
        L(\F_r)_t \equiv L \left(\F_{1 + \frac{r-1}{t^2}} \right) \text{ for all } 1 < r \leq \infty, \ 0 < t < \infty.
   \end{equation*}

Now suppose that there is $1 < r < s < \infty$ satisfying $L(\F_r) \equiv L(\F_s)$. Then for all $0 < t < \infty$, 
    \begin{equation} \label{eqn: exchange r and s}
        L(\F_{1 + t^{-2}(r-1)}) \equiv L(\F_r)_t \equiv L(\F_s)_t \equiv L(\F_{1 + t^{-2}(s-1)}),
    \end{equation}
    where the center equivalence is due to Lemma \ref{lemma: ee preserved under compression}.
    
    \noindent 
    Choose $0 < \epsilon < 1$ and $0 < t < \infty$ satisfying $\epsilon = t^{-2} (r-1)$. Then,
    \[
        1 + t^{-2}(s-1) = 1 + \epsilon \left( \frac{s-1}{r-1} \right).
    \]
    Thus, 
    \begin{align*}
        L(\F_3) &\equiv L(\F_{1+\epsilon}) * L(\F_{2 - \epsilon})  &\text{ by (\ref{eqn: addition for ee})}\\
        &= L(\F_{1 + t^{-2}(r-1)}) * L(\F_{2 - \epsilon}) \\
        &\equiv L(F_{1 + t^{-2}(s-1)}) * L(\F_{2 - \epsilon}) &\text{ by (\ref{eqn: exchange r and s}) and Q3.3} \\
        &= L(\F_{1 + \epsilon (\frac{s-1}{r-1})}) * L(\F_{2 - \epsilon}) \\
        &\equiv L(\F_{3 + \epsilon (\frac{s-r}{r-1})})  &\text{ by (\ref{eqn: addition for ee})} \\
        &\equiv L(\F_3)_\alpha,
    \end{align*}
    where $\alpha^{-2} = 1 + \frac{\epsilon}{2} (\frac{s-r}{r-1})$. 
    Then, letting $\epsilon$ vary over $(0,1)$, we have that $(1, 1 + \frac{1}{2}(\frac{s-r}{r-1})) \subseteq \mathcal{F}_\fo(L(\F_3))$. Thus, $\mathcal{F}_\fo(L(\F_3))$ is a multiplicative subgroup of $\bR_+$ containing an open interval, and is therefore all of $\bR_+$. 
    We conclude that $L(\F_3) \equiv L(\F_x)$ for all $x \in (1, \infty)$.
\end{proof}
\end{thm}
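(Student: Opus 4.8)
The plan is to derive Conjecture \ref{thm: fgf-ee-alternative} from the trichotomy of Theorem \ref{thm: trichotomy}: under the stated hypothesis it suffices to rule out the third alternative of that trichotomy, or, equivalently, to show that if $L(\F_r)\equiv L(\F_s)$ for even one pair $1<r<s<\infty$ then in fact $\mathcal{F}_{\fo}(L(\F_3))=\bR_+$. (If no such pair exists we are already in the second alternative of both the trichotomy and the conjecture, so no hypothesis is needed there; one could also phrase the whole argument as this direct case split without invoking the trichotomy.) So I would fix such a pair $r<s$ and aim to produce a whole interval of ratios $\beta$ with $L(\F_3)_\beta\equiv L(\F_3)$.

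The engine is the free-dimension calculus of Fact \ref{interpfact}. For a parameter $\epsilon\in(0,1)$, start from $L(\F_3)\cong L(\F_{1+\epsilon})*L(\F_{2-\epsilon})$ via Fact \ref{interpfact}(2); pick $t$ with $t^{-2}(r-1)=\epsilon$ so that $L(\F_{1+\epsilon})=L(\F_r)_t$; use $L(\F_r)\equiv L(\F_s)$ together with Lemma \ref{lemma: ee preserved under compression} to replace $L(\F_r)_t$ by $L(\F_s)_t=L(\F_{1+\epsilon(s-1)/(r-1)})$; now invoke the standing hypothesis that taking the free product with $L(\F_{2-\epsilon})$ preserves elementary equivalence; and finally re-collect free dimensions with Fact \ref{interpfact}(2). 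This chain yields
\[
   L(\F_3)\equiv L(\F_{3+\epsilon(s-r)/(r-1)})\cong L(\F_3)_\beta, \qquad \beta^{-2}=1+\frac{\epsilon}{2}\cdot\frac{s-r}{r-1},
\]
where the last isomorphism is Fact \ref{interpfact}(3) read backwards. Letting $\epsilon$ range over $(0,1)$, the resulting values of $\beta$ sweep out an open interval, so $\mathcal{F}_{\fo}(L(\F_3))$ is a subgroup of $\bR_+$ containing an open interval, hence is all of $\bR_+$. Since every $L(\F_x)$ with $1<x<\infty$ is of the form $L(\F_3)_t$ for some $t\in\bR_+$ (Fact \ref{interpfact}(3)), Lemma \ref{lemma: ee preserved under compression} then gives $L(\F_x)\equiv L(\F_3)$ for all such $x$; that is, we land in the first alternative of Theorem \ref{thm: trichotomy}, which is Conjecture \ref{thm: fgf-ee-alternative}.

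I expect the real obstruction to lie entirely in the hypothesis itself: proving outright that free products with a fixed tracial von Neumann algebra preserve first-order theories is wide open (it is the operator-algebraic shadow of the highly nontrivial fact that free products of groups preserve elementary equivalence). Granting it, the argument is careful bookkeeping with Fact \ref{interpfact}, and the one genuinely load-bearing choice is to take the free complement to be $L(\F_{2-\epsilon})$ rather than something of fixed free dimension: this keeps the total free dimension equal to $3$ for every $\epsilon\in(0,1)$, which is exactly what turns a potentially discrete set of ratios into an interval and thereby forces $\mathcal{F}_{\fo}(L(\F_3))=\bR_+$ rather than leaving us stuck in the third alternative of the trichotomy.
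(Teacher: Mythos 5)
Your proposal is correct and is essentially the paper's own argument: the same decomposition $L(\F_3)\cong L(\F_{1+\epsilon})*L(\F_{2-\epsilon})$ with $\epsilon=t^{-2}(r-1)$, the same use of Lemma \ref{lemma: ee preserved under compression} and the free-product hypothesis, and the same conclusion that $\mathcal{F}_{\fo}(L(\F_3))$ contains an interval and hence equals $\bR_+$. The only (harmless) difference is that you frame the reduction explicitly through the trichotomy of Theorem \ref{thm: trichotomy}, whereas the paper runs the case split directly.
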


% By Remark \ref{generalizedcontinuity}, if free producting with $L(\bb F_{2-\epsilon})$ preserves elementary equivalence for all $\epsilon\in (0,1)$, then free producting with $L(\bb F_2)$ also preserves elementary equivalence.

We next explore some results which show that if taking the free product with $L(\bb Z)$ or $\cal R$ preserves elementary equivalence, then under certain conditions, the trichotomy can be improved to a dichotomy.

\begin{defn}
Call $(r,s)\in (1,\infty)^2$ an \textbf{independent pair} if the numbers $\{\frac{r+k}{s+k}\}_{k=-1}^{\infty}$ do not all generate the same multiplicative subgroup of $\bb R_+$.
\end{defn}

\begin{lem}
Suppose that there is an independent pair $(r,s)\in (1,\infty)^2$ for which $L(\bb F_r)\equiv L(\bb F_s)$.  Further suppose that taking the free product with either $L(\bb Z)$ or $\cal R$ preserves elementary equivalence.  Then $L(\bb F_x)\equiv L(\bb F_y)$ for all $x,y\in (1,\infty)$.
\end{lem}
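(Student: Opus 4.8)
The plan is to combine the hypothesis $L(\bb F_r)\equiv L(\bb F_s)$ with the preservation of elementary equivalence under free producting with $L(\bb Z)$ (or $\cal R$) to generate enough ratios to invoke the trichotomy. Recall that $\operatorname{fd}(L(\bb Z))=\operatorname{fd}(\cal R)=1$, so by Fact \ref{freedimfact} we have $L(\bb F_t)*L(\bb Z)\cong L(\bb F_{t+1})$ for every $t\in(1,\infty)$; iterating, $L(\bb F_t)*L(\bb F_k)\cong L(\bb F_{t+k})$ for every positive integer $k$ via repeated free products with $L(\bb Z)$ (or one free product with $L(\bb F_k)$ directly). Thus, starting from $L(\bb F_r)\equiv L(\bb F_s)$ and applying the preservation hypothesis $k$ times, we obtain
\[
L(\bb F_{r+k})\equiv L(\bb F_{s+k}) \quad\text{for all integers } k\geq 0.
\]

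Next I would feed each such equivalence into the ratio machinery from the proof of Theorem \ref{thm: trichotomy}. For each $k\geq 0$, the equivalence $L(\bb F_{r+k})\equiv L(\bb F_{s+k})$ produces a ratio $t_k:=\left(\frac{r+k-1}{s+k-1}\right)^{1/2}$, in the sense that $t_k\in\cal F$ (the common first-order fundamental group of all interpolated free group factors). Indeed, by Fact \ref{interpfact}(3), $L(\bb F_{r+k})\equiv L(\bb F_{s+k})$ together with Lemma \ref{lemma: ee preserved under compression} gives $L(\bb F_x)\equiv L(\bb F_y)$ whenever $\left(\frac{x-1}{y-1}\right)^{1/2}=t_k$, and chaining this with the amplification relations shows $L(\bb F_2)_{t_k}\equiv L(\bb F_2)$, i.e. $t_k^2\in\cal F$ — and since $\cal F$ is a group, all these squared ratios lie in $\cal F$. (Equivalently: $\frac{r+k-1}{s+k-1}\in\cal F$ for every $k\geq 0$; writing this with the index shift $j=k-1\geq -1$ gives exactly the numbers $\frac{r+j}{s+j}$ appearing in the definition of an independent pair, up to the harmless relabeling $r\mapsto r-1$, $s\mapsto s-1$ matching the $+k$ in $L(\bb F_{1+k})$ versus the $1+\frac{\cdot}{\cdot}$ normalization.) Here I would just be careful to reconcile the index conventions: the definition of independent pair uses $\{\frac{r+k}{s+k}\}_{k=-1}^\infty$, which corresponds to the fundamental-group elements arising from $L(\bb F_{(r-1)+(k+1)})\equiv L(\bb F_{(s-1)+(k+1)})$, and the hypothesis that $(r,s)$ is an independent pair is precisely the statement that these numbers do not all generate a common cyclic multiplicative subgroup.

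Now I would close the argument via the trichotomy. We have shown $\cal F$ contains the multiplicative subgroup generated by $\{\frac{r+k}{s+k}\}_{k=-1}^\infty$ (after the index bookkeeping), and by the independent-pair hypothesis this subgroup is not cyclic: it contains two elements that are not powers of a common element, equivalently it is not contained in any $\alpha^{\bb Z}$. Therefore option (3) of Theorem \ref{thm: trichotomy} is impossible, and since $\cal F$ is nontrivial (it contains ratios $\neq 1$, as $r\neq s$), option (2) is also impossible. Hence option (1) holds: $L(\bb F_x)\equiv L(\bb F_y)$ for all $x,y\in(1,\infty)$.

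The main obstacle I anticipate is purely one of bookkeeping rather than substance: one must check that two ratios $t_{k_1},t_{k_2}$ coming from $\frac{r+k_1-1}{s+k_1-1}$ and $\frac{r+k_2-1}{s+k_2-1}$ are genuinely ``not powers of each other'' for at least one pair $(k_1,k_2)$ — but this is exactly what the definition of independent pair guarantees, so no work is needed there. The one genuine point requiring care is confirming that the preservation hypothesis really does yield $L(\bb F_{r+k})\equiv L(\bb F_{s+k})$ for all $k$ simultaneously (so that we harvest infinitely many ratios, not just one): this follows by a trivial induction on $k$, free producting with $L(\bb Z)$ — or $\cal R$ — at each step and using $L(\bb F_t)*L(\bb Z)\cong L(\bb F_{t+1})$ from Fact \ref{freedimfact}. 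Once those ratios are in hand, the trichotomy of Theorem \ref{thm: trichotomy} does all the remaining work.
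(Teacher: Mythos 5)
Your proposal is correct and follows essentially the same route as the paper's proof: free product with $L(\bb Z)$ (or $\cal R$) inductively to get $L(\bb F_{r+k})\equiv L(\bb F_{s+k})$ for all $k$, convert these into elements $\frac{r+k}{s+k}$ (indices shifted by one) of the common first-order fundamental group $\cal F$, and use the independent-pair hypothesis together with the closed-subgroup trichotomy to force $\cal F=\bR_+$. The only quibbles are cosmetic: the shift $j=k-1$ already lands exactly on the set $\{\frac{r+j}{s+j}\}_{j\geq -1}$ from the definition (no relabeling of $r,s$ is needed), and the momentary conflation of $t_k$ with $t_k^2$ is harmless since $\cal F$ is a group.
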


\begin{proof}
If $L(\bb F_r)\equiv L(\bb F_s)$, then by Fact \ref{freedimfact} and the assumptions of the lemma, we have that $L(\bb F_{r+i})\equiv L(\bb F_{s+i})$ for all $i\geq 1$.
%  $L(\bb F_r)*L(\bb Z)\cong L(\bb F_{r+1})$ is true for all $r\geq 1$; see Lemma 1.6 in the Dykema paper I referred to.  It also works with free producting with $\cal R$.  This is summarized in Theorem 5.2 of Dykema's Munster survey.
Then by the assumption on $r$ and $s$, we get that not all of the associated ratios generate the same subgroup and then we conclude as in the remarks preceding Theorem \ref{thm: trichotomy}.
\end{proof}

\begin{cor}
Suppose that $L(\F_2) \equiv L(\F_n)$ for some integer $n \neq 2$ and that taking the free product with either $L(\bb Z)$ or $\cal R$ preserves elementary equivalence.  Then $L(\bb F_r)\equiv L(\bb F_s)$ for all $r, s \in (1,\infty)$.
\end{cor}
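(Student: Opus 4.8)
The plan is to derive this corollary directly from the previous lemma by checking that $(2,n)$ is an independent pair. First I would observe that the hypothesis $L(\bb F_2)\equiv L(\bb F_n)$ forces $n\geq 3$: for $L(\bb F_n)$ to be a $II_1$ factor one needs $n\geq 2$ (so that $\bb F_n$ is ICC), and $n\neq 2$. In particular $2<n$, so by the previous lemma it suffices to show that the numbers $\bigl\{\tfrac{2+k}{n+k}\bigr\}_{k=-1}^{\infty}$ do not all generate the same multiplicative subgroup of $\bb R_+$; for this it is enough to exhibit two of them that are multiplicatively independent.

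I would single out the two ratios coming from $k=-1$ and $k=0$, namely $\tfrac{1}{n-1}$ and $\tfrac{2}{n}$, and claim that they are multiplicatively independent. Suppose $\left(\tfrac{1}{n-1}\right)^{a}=\left(\tfrac{2}{n}\right)^{b}$ with $a,b\in\bb Z$. Passing to logarithms and using that $\log(n-1)>0$ and $\log(n/2)>0$ (here we use $n\geq 3$), we see that $a$ and $b$ have the same sign; replacing $(a,b)$ by $(-a,-b)$ if needed, and noting that if one of them vanishes then so does the other, we may assume $a,b\geq 1$. Clearing denominators yields $2^{b}(n-1)^{a}=n^{b}$. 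If $n$ is odd, the right-hand side is odd while the left-hand side is even (since $b\geq 1$), a contradiction. If $n=2\ell$ with $\ell\geq 2$, then cancelling $2^{b}$ gives $(2\ell-1)^{a}=\ell^{b}$; since $\gcd(2\ell-1,\ell)=1$ and $\ell\geq 2$, any prime dividing $\ell$ would also have to divide $2\ell-1$, which is impossible. Hence $a=b=0$, proving the claim.

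Consequently $(2,n)$ is an independent pair. Since $L(\bb F_2)\equiv L(\bb F_n)$ by hypothesis, and free producting with $L(\bb Z)$ or $\cal R$ preserves elementary equivalence by hypothesis, the previous lemma applies with $(r,s)=(2,n)$ and yields $L(\bb F_r)\equiv L(\bb F_s)$ for all $r,s\in(1,\infty)$.

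As for the main difficulty, there is not much of one: the previous lemma does all of the substantive work, and the only remaining content is the elementary arithmetic fact that $\tfrac1{n-1}$ and $\tfrac2n$ are multiplicatively independent for every integer $n\geq 3$. The single step needing a moment's care is the reduction to $a,b\geq 1$, which is handled by the sign observation on the logarithms; alternatively one could phrase the independence statement entirely inside the $\bb Q$-vector space $\bb R$ under the logarithm and quote unique factorization.
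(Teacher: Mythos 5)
Your proposal is correct and follows the same overall strategy as the paper: verify that $(2,n)$ is an independent pair and then invoke the preceding lemma. The difference lies in how the arithmetic is carried out. The paper splits on the parity of $n$: for odd $n$ it simply asserts that $\frac{1}{n-1}$ cannot be a power of $\frac{2}{n}$, while for even $n=2\ell$ it brings in the whole infinite family of ratios $\frac{k}{n+(k-2)}$, deduces $n\equiv 2 \pmod{k}$ for every $k\geq 3$, and concludes $n=2$, a contradiction. You instead prove, uniformly in $n\geq 3$, that $\frac{1}{n-1}$ and $\frac{2}{n}$ are multiplicatively independent: the relation $2^{b}(n-1)^{a}=n^{b}$ is killed by parity when $n$ is odd and by coprimality of $\ell$ and $2\ell-1$ when $n=2\ell$. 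This buys a few things. First, you only need the ratios at $k=-1$ and $k=0$, so a single application of the free-producting hypothesis beyond the given equivalence suffices, and both parities are handled by one argument. Second, full multiplicative independence is genuinely stronger than the paper's ``not a power of'' assertion: it says the two ratios generate a dense subgroup of $\bb R_{+}$, which is exactly the form of independence that the lemma's proof (via the remarks preceding the trichotomy, where density plus closedness of the first-order fundamental group yields all of $\bb R_{+}$) actually consumes, whereas ``not powers of each other'' alone does not by itself rule out both ratios lying in a common cyclic subgroup. So your verification is both more economical and slightly more robust than the paper's, while the structural skeleton of the proof is the same.
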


\begin{proof}
First suppose that $n$ is odd.  Then $(2,n)$ is an independent pair since $\frac{1}{n-1}$ cannot be a power of $\frac{2}{n}$.

On the other hand, suppose $L(\bb F_2) \equiv L(\bb F_n)$ for some even integer $n = 2\ell$, so that $\frac{2}{n} = \frac{1}{\ell}$. Assuming that the multiplicative subgroup generated by $\frac{1}{n-1}, \frac{1}{\ell}, \frac{3}{n+1}, \ldots$ are all the same, then there exists some $m$ so that 
\[
    \frac{1}{\ell^m} = \frac{3}{n+1} \implies n+1 = 3 \ell^m \implies  n \equiv 2 \mod 3.
\]
The same argument with $\frac{1}{\ell^m} = \frac{k}{n+(k-2)}$ shows that $n \equiv 2 \mod k$ for every $k \geq 3$. This implies $n = 2$.
\end{proof}

We examine a similar line of reasoning using free products by matrix algebras.  We first note that Fact \ref{freedimfact} implies the following:

\begin{lem}
Suppose that $L(\bb F_r)\equiv L(\bb F_s)$ and taking the free product with $M_k(\bb C)$ preserves theories.  Then $L(\bb F_{r+1-\frac{1}{k^2}})\equiv L(\bb F_{s+1-\frac{1}{k^2}})$.
\end{lem}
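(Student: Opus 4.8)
The plan is to reduce everything to Fact~\ref{freedimfact}, which computes free products of free group factors with matrix algebras in terms of free dimension. Since $\operatorname{fd}(L(\bb F_r)) = r$ and $\operatorname{fd}(M_k(\bb C)) = 1 - \frac{1}{k^2}$, Fact~\ref{freedimfact} gives the isomorphisms
\[
    L(\bb F_r) * M_k(\bb C) \cong L(\bb F_{r + 1 - \frac{1}{k^2}}), \qquad L(\bb F_s) * M_k(\bb C) \cong L(\bb F_{s + 1 - \frac{1}{k^2}}).
\]
First I would record these two identifications. Then, invoking the hypothesis that free producting with $M_k(\bb C)$ preserves elementary equivalence together with the assumption $L(\bb F_r) \equiv L(\bb F_s)$, I conclude $L(\bb F_r) * M_k(\bb C) \equiv L(\bb F_s) * M_k(\bb C)$. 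Transporting this equivalence across the two isomorphisms above yields $L(\bb F_{r + 1 - \frac{1}{k^2}}) \equiv L(\bb F_{s + 1 - \frac{1}{k^2}})$, as desired.

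There is essentially no obstacle here: the statement is a direct combination of Fact~\ref{freedimfact} with the standing hypothesis, and the only thing to be careful about is that $r + 1 - \frac{1}{k^2}$ and $s + 1 - \frac{1}{k^2}$ genuinely lie in $(1,\infty)$ so that the interpolated free group factors $L(\bb F_{r+1-\frac{1}{k^2}})$ and $L(\bb F_{s+1-\frac{1}{k^2}})$ are defined — this is immediate since $r,s > 1$ and $1 - \frac{1}{k^2} \geq 0$. Thus the whole proof is a two-line deduction once Fact~\ref{freedimfact} is in hand.
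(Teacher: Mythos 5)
Your proposal is correct and matches the paper's reasoning exactly: the paper states this lemma as an immediate consequence of Fact~\ref{freedimfact} (which gives $L(\bb F_r) * M_k(\bb C) \cong L(\bb F_{r+1-\frac{1}{k^2}})$) combined with the hypothesis that free producting with $M_k(\bb C)$ preserves elementary equivalence. Nothing is missing.
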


\begin{prop}
    If taking the free product with $M_2(\bb C)$ preserves elementary equivalence and $L(\F_m) \equiv L(\F_n)$ where $m, n$ are integers with different parity, then $L(\bb F_r)\equiv L(\bb F_s)$ for all $r, s \in (1,\infty)$. 
\end{prop}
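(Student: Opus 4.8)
The plan is to mimic the structure of the preceding Corollary (the $L(\bb Z)$/$\cal R$ case), but exploit the extra flexibility that free producting with $M_2(\bb C)$ shifts the free dimension by $1-\tfrac14=\tfrac34$ rather than by $1$. First I would record the relevant consequence of the last Lemma: if $L(\bb F_m)\equiv L(\bb F_n)$ and free producting with $M_2(\bb C)$ preserves elementary equivalence, then iterating gives $L(\bb F_{m+\frac34 i})\equiv L(\bb F_{n+\frac34 i})$ for every $i\geq 0$ (noting that all the relevant indices stay $>1$). So from the single equivalence $L(\bb F_m)\equiv L(\bb F_n)$ we obtain the family of ratios
\[
   t_i:=\left(\frac{m-1+\tfrac34 i}{n-1+\tfrac34 i}\right)^{1/2},\qquad i\geq 0,
\]
each of which is a ratio in the sense of the discussion before Theorem \ref{thm: trichotomy}.

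The key step is then to show that, when $m$ and $n$ have opposite parity, these ratios cannot all generate the same multiplicative subgroup of $\bb R_+$; equivalently, $(m,n)$ behaves like an ``independent pair'' with respect to this shifted family. Suppose toward a contradiction that they all generate the same cyclic group. Since $t_i\to 1$ as $i\to\infty$ but $t_i\neq 1$ (as $m\neq n$), infinitely many distinct values $t_i$ would have to lie in a single cyclic subgroup of $\bb R_+$, and one then extracts a rational-dependence relation among the $\frac{m-1+\frac34 i}{n-1+\frac34 i}$. Clearing denominators and rescaling by $4$, I would work with the integers $a_i:=4(m-1)+3i$ and $b_i:=4(n-1)+3i$, and the assumption forces, for suitable exponents, identities of the form $a_i^{\,p}=\lambda\, b_i^{\,q}$ or more simply $b_j=c\cdot a_i$ for various $i,j$ with a fixed rational $c$ (this is the exact analog of the ``$\frac{1}{\ell^m}=\frac{k}{n+(k-2)}$'' manipulation in the previous Corollary). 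Comparing such relations modulo small primes — and using that $a_i-b_i=4(m-n)$ is a fixed nonzero constant whose parity is governed by the parity of $m-n$ — yields the conclusion that $m-n$ must be even, contradicting the opposite-parity hypothesis. Once the independence is established, we conclude exactly as in the remarks preceding Theorem \ref{thm: trichotomy}: two ratios that are not powers of each other generate a dense, hence (by Proposition \ref{continuity}) closed, hence full subgroup $\cal F_{fo}(L(\bb F_r))=\bb R_+$, and therefore $L(\bb F_r)\equiv L(\bb F_s)$ for all $r,s\in(1,\infty)$.

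I expect the main obstacle to be the elementary number theory in the middle step: carefully ruling out the possibility that every $t_i$ is a power of a single fixed $t$. Unlike the clean $k$-by-$k$ modular argument in the $L(\bb Z)$ case — where the shift was by the integer $1$ — here the shift is by $\tfrac34$, so one must first clear the factor of $4$ and then argue about the arithmetic of the arithmetic progressions $4(m-1)+3i$ and $4(n-1)+3i$; the bookkeeping of which congruences survive is where care is needed. A secondary subtlety is making sure all the amplification indices appearing implicitly (when one passes between $L(\bb F_x)$ and its compressions) remain genuinely in the range $(1,\infty)$ so that Fact \ref{interpfact} and Lemma \ref{lemma: ee preserved under compression} apply; this is routine but should be checked. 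Everything else — the iteration of the previous Lemma and the density/closure argument — is identical to arguments already carried out above.
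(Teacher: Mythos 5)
Your overall strategy---iterate the free product with $M_2(\C)$ to get $L(\F_{m+\frac34 i})\equiv L(\F_{n+\frac34 i})$ for all $i\geq 0$, hence a family of ratios $t_i=\bigl(\frac{m-1+\frac34 i}{n-1+\frac34 i}\bigr)^{1/2}$ in the common first-order fundamental group $\mathcal F$, and then rule out the possibility that all $t_i$ lie in a single cyclic subgroup---is sound, but the step where you actually rule this out is not established as written. After ``clearing denominators'' you pass to $a_i=4(m-1)+3i$ and $b_i=4(n-1)+3i$; multiplying both sides by $4$ destroys exactly the parity asymmetry you need (both $a_i$ and $b_i$ are congruent to $i$ modulo $2$), and your stated anchor, that $a_i-b_i=4(m-n)$ has ``parity governed by the parity of $m-n$,'' is false: $4(m-n)$ is always even. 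The modular argument that is supposed to force $m-n$ to be even is only gestured at, so the key independence claim is a genuine gap as it stands. For contrast, the paper's proof needs no iteration and no rescaling of the first ratio: one application of the hypothesis gives the two ratios $\frac{m-1}{n-1}$ and $\frac{4m-1}{4n-1}$, and a multiplicative dependence between them would give $(4m-1)(n-1)^t=(4n-1)(m-1)^t$, which is impossible because $4m-1$ and $4n-1$ are odd while exactly one of $m-1$, $n-1$ is even---that is precisely where the opposite-parity hypothesis enters.

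The irony is that you already wrote down a complete and simpler way to finish your own route: $t_i\neq 1$ (since $m\neq n$) while $t_i\to 1$ as $i\to\infty$. If case (3) of Theorem \ref{thm: trichotomy} held, then $\mathcal F=\alpha^{\bb Z}$ for some $\alpha\in(1,\infty)$, a discrete subgroup of $\bb R_+$, which cannot contain elements different from $1$ arbitrarily close to $1$; case (2) is excluded because $t_0\neq 1$ lies in $\mathcal F$; hence case (1) holds and $L(\F_r)\equiv L(\F_s)$ for all $r,s\in(1,\infty)$. No arithmetic is needed at all, and indeed this version uses only $m\neq n$ rather than opposite parity, so your iteration-plus-discreteness argument, once completed this way, proves a slightly stronger statement than the paper's one-step parity proof. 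Do also record explicitly (as you note) that all intermediate indices stay in $(1,\infty)$ and that each $t_i\in\mathcal F$ because $L(\F_{n+\frac34 i})_{t_i}\cong L(\F_{m+\frac34 i})\equiv L(\F_{n+\frac34 i})$.
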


\begin{proof}
By taking the free product with $M_2(\bb C)$ once we have $L(\F_{m + \frac{3}{4}}) \equiv L(\F_{n + \frac{3}{4}})$, which yields the ratio $\frac{4m-1}{4n-1}$. If this ratio generates the same multiplicative subgroup as $\frac{m-1}{n-1}$, then we obtain that there is some $t$ such that $(4m-1)(n-1)^t = (4n-1)(m-1)^t$. Since $4m-1$ and $4n-1$ are odd, and $m,n$ have different parity, this yields a contradiction. 
\end{proof}

% \textcolor{blue}{This is great!  Although it assumes $t>0$.  If $t<0$ then just move the terms to the other sides, but it still works.}

Although it seems difficult to prove that taking the free product can ever preserve elementary equivalence, in some cases we have some positive results, as in the following proposition. 
% The following proposition follows immediately from Lemma \ref{lemma: compr and ultrapower commute} and Fact \ref{dykemafact}.

% \begin{lem}
% Suppose that $A,A',B,B'$ are tracial von Neumann algebras such that $A\cong A\otimes M_2(\bb C)$ (and similarly for $A'$, $B$, and $B'$).  Further assume that $A*B\equiv A'*B'$.  Then $A*B*L(\bb F_3)\equiv A'*B'*L(\bb F_3)$.
% \end{lem}

% \begin{proof}
% $A*B\equiv A'*B'$ implies $(A*B)_{\frac{1}{2}}\equiv (A'*B')_{\frac{1}{2}}$.  But Proposition 4.11 in the Dykema survey yields the desired result.
% \end{proof}

% As an example, if $B$ and $B'$ have $2$ in their fundamental group and $\cal R*B\equiv \cal R*B'$, then $L(\bb F_4)*B\equiv L(\bb F_4)*B'$.  In particular, if $\cal R\equiv \cal R'$ and $L(\bb F_2)\equiv \cal R*\cal R'$, then $L(\bb F_5)\equiv L(\bb F_4)*\cal R'$.

% I think this generalizes:

\begin{prop}
Suppose that $\cal M,\cal M',\cal N,\cal N' $ are II$_1$ factors such that $\cal M\cong \cal M\otimes M_n(\bb C)$ and similarly for $\cal M'$, $\cal N$, and $\cal N'$.  Further assume that $\cal M*\cal N\equiv \cal M'*\cal N'$.  Then $\cal M*\cal N*L(\bb F_{n^2-1})\equiv \cal M'*\cal N'*L(\bb F_{n^2-1})$.
\end{prop}

\begin{proof}
We first claim that
$$((\cM \otimes M_n(\bb C))*(\cN \otimes M_n(\bb C))_{1/n}\cong \cM * \cN *L(\bb F_{n^2-1}).$$

Indeed, by applying Fact \ref{dykemafact} twice, we have
\begin{align*}
    ((\cM \otimes M_n(\bb C)) * (\cN \otimes M_n(\bb C)))_{1/n} &\cong \cM * ((\cN \otimes M_n(\bb C)) * M_n(\bb C))_{1/n} \\
    &\cong \cM * (\cN * (M_n(\bb C) * M_n(\bb C))_{1/n}).
\end{align*}
Now Fact \ref{freedimfact} implies that $M_n(\bb C) * M_n(\bb C) \cong L(\F_{2 - \frac{2}{n^2}})$. Then $L(\F_{2- \frac{2}{n^2}})_{1/n} \cong L(\F_{n^2 - 1})$ gives the desired conclusion.

The result of the proposition follows by using the assumptions to obtain
\begin{align*}
     \cM * \cN *L(\bb F_{n^2-1}) \cong ((\cM \otimes M_n(\bb C))*(\cN \otimes M_n(\bb C))_{1/n} \cong (\cM*\cN)_{1/n} 
     \equiv (\cM'*\cN')_{1/n}, 
\end{align*}
and then performing the same equivalences to obtain $(\cM'*\cN')_{1/n} \cong \cM' * \cN' *L(\bb F_{n^2-1})$.
\end{proof}

A particular instance of the previous proposition is the case that $\cal M$, $\cal M'$, $\cal N$ and $\cal N'$ are II$_1$ factors, each of which have all positive integers in their fundamental group (e.g. if they have full fundamental group).  In this case, if $\cal M*\cal N\equiv \cal M'*\cal N'$, then $\cal M*\cal N*L(\bb F_{n^2-1})\equiv \cal M'*\cal N'*L(\bb F_{n^2-1})$ for all $n\geq 1$.

% \begin{cor}
% Suppose that $L(\bb F_r)\equiv L(\bb F_s)$.  Further suppose that there are $A$, $B$, $C$, and $D$ and $k\in \bb N$ such that $A$, $B$, $C$, and $D$ are all either matrix algebras, interpolated free group factors, $L(\bb Z)$, or $\cal R$.  Suppose also that $(A\otimes M_n(\bb C))*(B\otimes M_n(\bb C))\cong L(\bb F_r)$ and $(C\otimes M_n(\bb C))*(D\otimes M_n(\bb C))\cong L(\bb F_s)$.  Then $A*B*L(\bb F_{n^2-1})\equiv C*D*L(\bb F_{n^2-1})$.
% \end{cor}

% If the ratios $\frac{r-1}{s-1}$ and $\frac{r'-1}{s'-1}$, where $r' = \operatorname{fdim}(A*B*L(\F_{k^2-1})$ and $s' = \operatorname{fdim}(C*D*L(\F_{k^2-1})$ do not lie in the same multiplicative subgroup, then we have that they are all elementarily equivalent. Unfortunately, some computation shows that this does not happen; PERHAPS DELETE this Proposition unless we find some other use for it.

\section{$\forall \exists$-theories of free group factors}

In this section, we establish some promising results concerning the values of $\forall\exists$-sentences in free group factors.  Recall that a $\forall\exists$-sentence is a sentence $\sigma$ of the form $\sup_{\vec x}\inf_{\vec y} \varphi(\vec x,\vec y)$, where $\varphi$ is a quantifier-free formula.  For a II$_1$ factor $\cal M$, we consider the \textbf{$\forall\exists$-theory} $\Th_{\forall\exists}(\cal M)$ of $\cal M$, which is the function $\sigma\mapsto \sigma^{\cal M}$ defined on the set of all $\forall\exists$ sentences.

Our results concerning $\forall\exists$-sentences will follow from the existence of certain nice embeddings between some pairs of interpolated free group factors.

% \begin{cor}
% Suppose that $r,s\in (1,\infty)$ are such that $r+1\leq s$.  Then for any $\forall\exists$-sentence $\sigma$, we have $\sigma^{L(\bb F_r)}\leq \sigma^{L(\bb F_s)}$.
% \end{cor}

% \begin{proof}
% If $r+1<s$, then the result follows from the previous proposition using that $L(\bb F_s)=L(\bb F_r)*L(\bb F_{s-r})$.  If $s=r+1$, then the result follows from the previous case by considering any sequence $s_n\to s$ with $s_n>r+1$ and noting $\sigma^{L(\bb F_{s_n})}\to \sigma^{L(\bb F_s)}$.
% \end{proof}

% Here are some cases not covered by the previous corollary:

% \begin{prop}
% For any $n\geq 2$ (probably any $r\in [2,\infty)$), there is an existential embedding $L(\bb F_n)\hookrightarrow L(\bb F_{n+1-\frac{1}{k^2}})$.
% \end{prop}

% \begin{proof}
% This follows from Theorem 4.4 in Dykema, which states that $$L(\bb F_n)*M_k(\bb C)\cong L(\bb F_{nk^2})\otimes M_k(\bb C)\cong L(\bb F_{n+1-\frac{1}{k^2}}).$$
% \end{proof}

\subsection{Existential embeddings}

\begin{defn}
    An embedding $i:\cM \hookrightarrow \cN$ is \textbf{existential} if $\varphi(\vec a)^{\cal M}=\varphi(i(\vec a))^{\cal N}$ for all existential formulae $\varphi(\vec x)$, that is, all formulae of the form $\varphi(\vec x)=\inf_{\vec y}\psi(\vec x,\vec y)$, where $\psi$ is quantifier-free. 
\end{defn}

 Equivalently, $i:\cM \hookrightarrow \cN$ is existential if there is an embedding $j:\cN\hookrightarrow \cM^\cU$ for which $j\circ i$ restricts to the diagonal embedding of $\cM$ into $\cM^\cU$.

Since existential embeddings preserve formulae with one quantifier, we write $i:\cM \hookrightarrow_1 \cN$ to emphasize that the embedding $i$ is existential. We may also write $\cal M\hookrightarrow_1 \cal N$ to mean that there is an existential embedding of $\cal M$ into $\cal N$.  It is clear that whenever $\cM \hookrightarrow_1 \cN$, then $\Th_{\forall \exists}(\cM) \leq \Th_{\forall \exists}(\cN)$ (as functions). 

\begin{lem}\label{ecfree}
If $i:\cM \hookrightarrow_1 \cal P$ and $j:\cN \hookrightarrow_1 \cal Q$ are existential embeddings, then so is $i*j:\cM* \cal N \hookrightarrow_1 \cal P* \cal Q$.
\end{lem}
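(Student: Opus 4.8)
The plan is to use the semantic characterization of existential embeddings given just before the lemma: an embedding $k:\cal A\hookrightarrow \cal B$ is existential if and only if there is an ultrafilter $\cal U$ and an embedding $\ell:\cal B\hookrightarrow \cal A^{\cal U}$ such that $\ell\circ k$ is the diagonal embedding $\cal A\hookrightarrow \cal A^{\cal U}$. So, starting from $i:\cM\hookrightarrow_1\cal P$ and $j:\cN\hookrightarrow_1\cal Q$, I would first pass to a common ultrafilter. A priori the two existential embeddings come with possibly different ultrafilters $\cal U_1$ and $\cal U_2$, but one can replace both by the product ultrafilter (or any ultrafilter refining both on a common index set), using that if $\ell:\cal B\hookrightarrow \cal A^{\cal U}$ witnesses existentiality then composing with the canonical embedding $\cal A^{\cal U}\hookrightarrow \cal A^{\cal W}$ for a suitable $\cal W$ still witnesses it; so without loss of generality there is a single ultrafilter $\cal U$ and embeddings $\ell:\cal P\hookrightarrow \cM^{\cal U}$ and $m:\cal Q\hookrightarrow\cN^{\cal U}$ with $\ell\circ i$ and $m\circ j$ the respective diagonal embeddings.

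Next I would form the free product embedding $\ell * m:\cal P*\cal Q\hookrightarrow \cM^{\cal U}*\cN^{\cal U}$. The functoriality of the reduced free product of tracial von Neumann algebras with respect to trace-preserving embeddings gives that $\ell*m$ is a well-defined (trace-preserving, hence normal, injective) $*$-homomorphism, and likewise $i*j:\cM*\cN\hookrightarrow\cal P*\cal Q$. The key diagram I want to commute is
\[
\cM*\cN \xrightarrow{\ i*j\ } \cal P*\cal Q \xrightarrow{\ \ell*m\ } \cM^{\cal U}*\cN^{\cal U},
\]
and by functoriality the composite is $(\ell\circ i)*(m\circ j)$, which is the "diagonal" free product embedding $\cM*\cN\hookrightarrow \cM^{\cal U}*\cN^{\cal U}$.

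The remaining, and I expect main, step is to relate $\cM^{\cal U}*\cN^{\cal U}$ back to $(\cM*\cN)^{\cal U}$. There is always a canonical trace-preserving embedding $(\cM*\cN)^{\cal U}\hookrightarrow \cM^{\cal U}*\cN^{\cal U}$, but what I need goes the other way, or at least I need an embedding $n:\cM^{\cal U}*\cN^{\cal U}\hookrightarrow (\cM*\cN)^{\cal V}$ for some ultrafilter $\cal V$ such that $n$ composed with the diagonal-free-product map is the diagonal embedding of $\cM*\cN$ into $(\cM*\cN)^{\cal V}$. This is exactly a statement that ultrapowers "almost commute" with free products in the relevant direction; concretely, one can build such an $n$ by a diagonal/reindexing argument: an element of $\cM^{\cal U}*\cN^{\cal U}$ is approximated by words in elements of $\cM^{\cal U}$ and $\cN^{\cal U}$, each of which is represented by a $\cal U$-sequence from $\cM$ or $\cN$; assembling these coordinatewise produces a $\cal U$-sequence of elements of $\cM*\cN$, and one checks (using freeness and the moment formula for free products, together with {\L}o\'s's theorem) that the trace is preserved in the limit, giving a trace-preserving embedding into $(\cM*\cN)^{\cal U}$ compatible with the diagonal maps. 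Granting such $n$, the composite $n\circ(\ell*m)\circ(i*j)$ is the diagonal embedding $\cM*\cN\hookrightarrow(\cM*\cN)^{\cal U}$, so $n\circ(\ell*m)$ is the witness showing $i*j:\cM*\cN\hookrightarrow_1\cal P*\cal Q$ is existential, as desired.

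I'd note that an alternative, possibly cleaner, route avoids constructing $n$ by hand: it suffices to know that the canonical embedding $(\cM*\cN)^{\cal U}\hookrightarrow\cM^{\cal U}*\cN^{\cal U}$ is itself \emph{existential} (equivalently, that $\cM*\cN$ is existentially closed in the appropriate sense relative to this map, or that the inclusion is "$\exists$-split" through a further ultrapower). If that is available from the model theory of tracial von Neumann algebras, then the diagonal embedding of $\cM*\cN$ into $\cM^{\cal U}*\cN^{\cal U}$ is existential, and composing the chain above with the existential witness for that inclusion finishes the argument. Either way, the crux is controlling how ultrapowers interact with free products on elements that are words of bounded length, and I expect the bookkeeping in that step—ensuring trace preservation in the $\cal U$-limit and compatibility with all the diagonal maps—to be the only real work; everything else is formal functoriality of the free product construction and the semantic characterization of existential embeddings.
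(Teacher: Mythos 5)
Your proof is correct and follows essentially the same route as the paper: the paper also composes $i*j$ with $i'*j':\cal P*\cal Q\hookrightarrow \cM^{\cal U}*\cN^{\cal U}$ and then uses the natural inclusion $\cM^{\cal U}*\cN^{\cal U}\subseteq(\cM*\cN)^{\cal U}$ (which is exactly the map $n$ you build by hand, via coordinatewise representatives and preservation of freeness in the ultralimit) to see that the total composition is the diagonal embedding of $\cM*\cN$. The only slip is your aside that the canonical map goes $(\cM*\cN)^{\cal U}\hookrightarrow\cM^{\cal U}*\cN^{\cal U}$: in fact the canonical embedding goes the other way, from the free product of the ultrapowers into the ultrapower of the free product, so the map you need is the canonical one and the proposed ``alternative route'' based on that aside can simply be dropped.
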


\begin{proof}
Let $i':\cal P\hookrightarrow \cal M^\cal U$ and $j':\cal Q\to \cal N^\cal U$ be such that $i'\circ i$ and $j'\circ j$ are the respective diagonal embeddings.  The lemma follows from observing that the composition of the natural maps
$$\cM * \cN \hookrightarrow \cal P* \cal Q\hookrightarrow \cM^\cal U * \cN^\cal U\subseteq (\cM* \cN)^\cal U$$ is the diagonal embedding, where the first map is $i*j$ and the second map is $i'*j'$. 
%CHECK FOR LAST INCLUSION:
% We check that $M^\cU * N^\cU \subseteq (M * N)^\cU$.
% We have that both $M^\cU$ and $N^\cU$ are von Neumann subalgebras of $(M*N)^\cU$, so we just have to check that they are free. 
% For any $n \in \bN$, let $\{ \bar{x}^{(j)} := (x^{(j)}_i)_\cU \}_{j=1}^n$ be a set of alternating elements from $M^\cU$ and $N^\cU$ (i.e. if $\bar{x}^{(1)} \in M^\cU$, then $\bar{x}^{(j)} \in N^\cU$ for all even $j$, and $\bar{x}^{(j)} \in M^\cU$ for all odd $j$).
% Also suppose that $\tau^{(M*N)^\cU}(\bar{x}^{(j)}) = 0$ for all $j$.
% Then, 
% \[
%     \tau^{(M*N)^\cU}(\bar{x}^{(1)} \cdots \bar{x}^{(n)}) = \lim_{i \rightarrow \cU} \tau^{M *N}(x^{(1)}_i \cdots x^{(n)}_i) = 0.
% \]
% Justifying the last equality: since $\tau^{(M*N)^\cU}(\bar{x}^{(j)}) = 0$ for all $j$, then for each $j$ there is some set $A_j \in \cU$ such that $\tau^{M*N}(x_i^{(j)}) = 0$ for all $i \in A_j$. 
% Thus, for all $i \in \bigcap_{j=1}^n A_j$, we have $\tau^{M*N}(x_i^{(j)}) = 0$ for all $j$. As $\cU$ is closed under finite intersection, we can conclude that $\tau^{M*N}(x^{(1)}_i \cdots x^{(n)}_i) = 0$ for $\cU$-many indices $i$.
\end{proof}

In what follows, we call a tracial von Neumann algebra \textbf{embeddable} if it admits a trace-preserving embedding into $\cal R^\cU$.

\begin{prop}\label{freegroupexistentia}
For any $n\geq 2$ and any embeddable type II$_1$ von Neumann algebras $\cM_1,\ldots,\cM_n$, there is an existential embedding $L(\bb F_n) \hookrightarrow_1 \cM_1*\cdots * \cM_n$.
\end{prop}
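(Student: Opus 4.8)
The plan is to exhibit a free semicircular (or, what amounts to the same by functional calculus, free Haar-unitary) generating set for $L(\bb F_n)$ inside $\cM_1 * \cdots * \cM_n$ and then argue that the resulting embedding is existential by finding a reverse embedding into an ultrapower. Concretely, for each $i$ fix a trace-preserving embedding $\cM_i \hookrightarrow \cal R^\cU$; taking the free product of these maps (and using that $\cal R^\cU * \cdots * \cal R^\cU$, an $n$-fold free product, embeds into $(\cal R^\cU)^\cU \cong \cal R^\cU$ since $\cal R^\cU$ is, in the appropriate sense, closed under free products with embeddable algebras up to ultrapower) we see $\cM_1 * \cdots * \cM_n$ is itself embeddable. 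So the target is embeddable; the point is to choose the copy of $L(\bb F_n)$ well.

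The key step is to use that each $\cM_i$ is a type II$_1$ von Neumann algebra — hence diffuse — so it contains a Haar unitary $u_i$, equivalently an element whose $*$-distribution is that of a generator of $L(\bb Z)$. Since the $\cM_i$ are free inside $\cM_1*\cdots*\cM_n$, the family $u_1,\ldots,u_n$ is a free family of Haar unitaries, so $(u_1,\ldots,u_n)'' \cong L(\bb F_n)$ and the inclusion $j : L(\bb F_n) \hookrightarrow \cM_1 * \cdots * \cM_n$ sending the standard generators to the $u_i$ is a trace-preserving embedding. It remains to check $j$ is existential, i.e. (by the semantic characterization quoted just before the proposition) to produce an embedding $\cM_1 * \cdots * \cM_n \hookrightarrow L(\bb F_n)^\cU$ whose composition with $j$ is the diagonal embedding of $L(\bb F_n)$. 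For this I would build, for each $i$, an embedding $\cM_i \hookrightarrow (u_i'')^\cU \subseteq L(\bb F_n)^\cU$ that sends $u_i$ to (the diagonal image of) the chosen Haar unitary in $\cM_i$; such an embedding exists because $\cM_i$ is embeddable and $u_i'' \cong L(\bb Z)$ is diffuse, so one can absorb $\cM_i$ into an ultrapower of $u_i''$ fixing the distinguished Haar unitary — this is a relative Connes-embedding / amalgamation statement over $L(\bb Z)$. Freeness is then preserved: the free product of these maps gives $\cM_1 * \cdots * \cM_n \hookrightarrow (u_1'')^\cU * \cdots * (u_n'')^\cU$, and since $u_1'' * \cdots * u_n'' = L(\bb F_n)$ sits inside $L(\bb F_n)^\cU$ with the $u_i''$ mutually free, we land inside $L(\bb F_n)^\cU$ (again using the free-product-of-ultrapowers-inside-ultrapower-of-free-product trick that appears in the proof of Lemma~\ref{ecfree}). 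By construction this composite restricts to the diagonal embedding of $L(\bb F_n)$, so $j$ is existential.

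The main obstacle is the relative embedding claim: that an embeddable II$_1$ algebra $\cM_i$ containing a fixed Haar unitary $u_i$ admits a trace-preserving embedding into an ultrapower of $\{u_i\}''$ that is the identity on $u_i$. Plain Connes embeddability only gives an embedding $\cM_i \hookrightarrow \cal R^\cU$; one needs to arrange that the copy of $L(\bb Z)$ generated by $u_i$ is sent to a prescribed Haar unitary. One clean way to get this is to note that $L(\bb Z)^\cU$ contains, freely over nothing but in fact compatibly, a copy of $\cal R^\cU$ together with a Haar unitary free from it — more precisely, $L(\bb Z) \cong L(\bb Z) * (\text{amenable})$ fails, but one can instead use that $\cM_i \hookrightarrow \cal R^\cU$ and then embed $\cal R^\cU \rtimes$-ishly... — so the honest fix is: write $\cM_i$ with its Haar unitary $u_i$; since $\cM_i$ is embeddable, by a standard argument it embeds into $(\{u_i\}'' * \cal R)^\cU$ over $\{u_i\}''$ (amalgamated free products of embeddable algebras over amenable subalgebras are embeddable — this is a theorem of Brown–Dykema–Jung or the like), and $\{u_i\}'' * \cal R \cong L(\bb Z) * \cal R \cong L(\bb F_2)$, whose ultrapower embeds into $L(\bb F_n)^\cU$ fixing the relevant Haar unitary. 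I would isolate this amalgamation-over-amenable-preserves-embeddability fact as the technical heart of the argument, cite it, and keep the freeness bookkeeping (which is entirely parallel to the proof of Lemma~\ref{ecfree}) routine.

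Finally, I would double-check the small hypothesis usages: "type II$_1$" is exactly what guarantees each $\cM_i$ is diffuse and hence contains a Haar unitary (a type I$_n$ factor would not), and $n \geq 2$ is needed only so that "$L(\bb F_n)$" is a II$_1$ factor and the statement is non-vacuous — the argument itself works verbatim for the free product of $n$ diffuse embeddable algebras. I would state the conclusion as: the map $j$ sending the free generators of $L(\bb F_n)$ to Haar unitaries $u_i \in \cM_i$ is an existential embedding $L(\bb F_n) \hookrightarrow_1 \cM_1 * \cdots * \cM_n$, and in particular $\Th_{\forall\exists}(L(\bb F_n)) \leq \Th_{\forall\exists}(\cM_1 * \cdots * \cM_n)$.
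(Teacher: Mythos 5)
Your overall strategy---realize $L(\F_n)$ inside $\cM_1 * \cdots * \cM_n$ and then build a map back into $L(\F_n)^\cU$ restricting to the diagonal---has the right shape, but the choice of Haar unitaries as generators creates problems your sketch does not resolve. You must produce, for each $i$, an embedding $\cM_i \to L(\F_n)^\cU$ fixing $u_i$; your first formulation (into $(\{u_i\}'')^\cU$) is impossible, since $L(\bb Z)^\cU$ is abelian, and your fix, citing embeddability of amalgamated free products over amenable subalgebras, does not by itself give what you need: that theorem places $\cM_i$ and $A_i * \cR$ (where $A_i = \{u_i\}''$) compatibly inside some $\cR^\cU$, not $\cM_i$ inside an ultrapower of $A_i * \cR$ \emph{over} $A_i$. (The relative embedding is in fact true, but the right tool is uniqueness, up to unitary conjugacy, of embeddings of a separable amenable algebra into a II$_1$ ultrapower: embed $\cM_i \to \cR^\cU \subseteq (A_i * \cR)^\cU$ and conjugate the image of $A_i$ onto the diagonal copy.)

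The genuine gap is the final step, which you call routine freeness bookkeeping. After your fix, the maps $\phi_i : \cM_i \to L(\F_n)^\cU$ do not land in the ultrapowers $(A_i)^\cU$ of the free factors of $L(\F_n)$---each needs an auxiliary copy of $\cR$ free from $A_i$, necessarily borrowed from the other generators---so the images $\phi_1(\cM_1), \ldots, \phi_n(\cM_n)$ have no reason to be mutually free in $L(\F_n)^\cU$, and without freeness they do not assemble into an embedding of $\cM_1 * \cdots * \cM_n$. The device behind Lemma \ref{ecfree} (that $\cM^\cU * \cN^\cU$ sits inside $(\cM * \cN)^\cU$ compatibly with the diagonals) applies only to ultrapowers of the free factors themselves, so it does not cover this situation; keeping each $u_i$ pinned to the diagonal while forcing the images to be free would require a free-independence-with-amalgamation argument in the spirit of Popa, which you would have to supply. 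The paper sidesteps all of this by generating $L(\F_n)$ with copies of $\cR$ rather than Haar unitaries: $\cR * \cdots * \cR \cong L(\F_n)$ by Fact \ref{freedimfact}, any embedding $\cR \hookrightarrow \cM_i$ is existential because $\cM_i$ is embeddable \cite[Lemma 2.1]{FGHS2016ecfactors}, and Lemma \ref{ecfree} together with induction via $\cR * L(\F_n) \cong L(\F_{n+1})$ finishes the proof. If you replace your $u_i$ by copies of $\cR$, your argument collapses to exactly this one.
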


\begin{proof}
We proceed by induction on $n$.  Suppose $n=2$.  Since each $\cM_i$ is type II$_1$, there is an embedding $\cal R\hookrightarrow \cM_i$ for each $i=1,2$.  By \cite[Lemma 2.1]{FGHS2016ecfactors}, these embeddings are existential.  By Fact \ref{freedimfact} and Lemma \ref{ecfree}, there is an existential embedding $L(\bb F_2) \cong \cal R * \cal R\hookrightarrow_1 \cM_1* \cM_2$.  The induction step follows from Fact \ref{freedimfact}, that is, using that $\cal R*L(\bb F_n)\cong L(\bb F_{n+1})$.
\end{proof}

For the next result, we recall that Connes \cite{Connes76} established that the free group factors are embeddable (this was also shown independently by Wasserman \cite{Wasserman1976}). Since the interpolated free group factors are amplifications of embeddable factors, they are themselves embeddable (as $\cal R$ has full fundamental group).

\begin{cor} \label{cor: fgf-ec-embeddings}
For any natural number $n\geq 2$ and any $r\in \bb R\cup\{\infty\}$ with $r\geq n$, there is an existential embedding $L(\bb F_n) \hookrightarrow_1 L(\bb F_r)$.
\end{cor}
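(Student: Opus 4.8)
The plan is to deduce Corollary \ref{cor: fgf-ec-embeddings} from Proposition \ref{freegroupexistentia} by choosing appropriate embeddable type II$_1$ von Neumann algebras whose free product is precisely $L(\bb F_r)$. First I would handle the finite case $n \leq r < \infty$. The idea is to write $L(\bb F_r)$ as a free product $\cM_1 * \cdots * \cM_n$ of $n$ embeddable type II$_1$ factors. Using the free dimension calculus (Fact \ref{freedimfact} and Fact \ref{interpfact}(2)), one natural choice is to take $\cM_1 = \cdots = \cM_{n-1} = \cal R$ and $\cM_n = L(\bb F_{r - (n-1)})$ when $r - (n-1) > 1$; since $\operatorname{fd}(\cal R) = 1$, Fact \ref{freedimfact} gives $\cM_1 * \cdots * \cM_n \cong L(\bb F_{(n-1) + (r-(n-1))}) = L(\bb F_r)$. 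Each $\cM_i$ is a type II$_1$ factor, and each is embeddable: $\cal R$ trivially, and $L(\bb F_{r-(n-1)})$ by the remark preceding the corollary. If $r = n$ exactly, one instead takes all $\cM_i = \cal R$, giving $L(\bb F_n) \cong \cal R^{*n}$. Proposition \ref{freegroupexistentia} then produces the desired existential embedding $L(\bb F_n) \hookrightarrow_1 \cM_1 * \cdots * \cM_n \cong L(\bb F_r)$.

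Next I would handle the case $r = \infty$. Here one can simply write $L(\bb F_\infty) \cong \cal R^{*(n-1)} * L(\bb F_\infty)$ (again by Fact \ref{freedimfact} or directly by Fact \ref{interpfact}(2), since $L(\bb F_{n-1}) * L(\bb F_\infty) \cong L(\bb F_\infty)$ and $\cal R^{*(n-1)} \cong L(\bb F_{n-1})$), so that $L(\bb F_\infty)$ is an $n$-fold free product of embeddable type II$_1$ factors, and apply Proposition \ref{freegroupexistentia} once more. Alternatively, one could compose the existential embedding $L(\bb F_n) \hookrightarrow_1 L(\bb F_m)$ for a large finite $m$ with the natural standard inclusion into $L(\bb F_\infty)$ after verifying that inclusion is existential, but the direct free-product description is cleaner and avoids any extra lemma.

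I do not expect a genuine obstacle here: the corollary is essentially a bookkeeping consequence of Proposition \ref{freegroupexistentia} once one notices the right decomposition of $L(\bb F_r)$ into a free product of exactly $n$ embeddable type II$_1$ algebras. The only points requiring a moment of care are (i) checking that the free-dimension arithmetic indeed produces $L(\bb F_r)$ with the correct parameter for every $r \geq n$ — which is immediate from $\operatorname{fd}(\cal R) = 1$ and $\operatorname{fd}(L(\bb F_t)) = t$ — and (ii) confirming embeddability of the factor $L(\bb F_{r-(n-1)})$ appearing in the decomposition, which is exactly the content of the remark citing Connes \cite{Connes76} together with the fact that amplifications of embeddable factors are embeddable (as $\cal R$ has full fundamental group). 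With these in hand the proof is a two-line application of the proposition.

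\begin{proof}
First suppose $r < \infty$. If $r = n$, then by Fact \ref{freedimfact} we have $L(\bb F_n) \cong \cal R^{*n}$, an $n$-fold free product of embeddable type II$_1$ factors. If $r > n$, then $r - (n-1) > 1$, and by Fact \ref{freedimfact} (using $\operatorname{fd}(\cal R) = 1$ and $\operatorname{fd}(L(\bb F_{r-(n-1)})) = r-(n-1)$) we have
\[
    \underbrace{\cal R * \cdots * \cal R}_{n-1} * L(\bb F_{r-(n-1)}) \cong L(\bb F_{(n-1)+(r-(n-1))}) = L(\bb F_r).
\]
Each factor in this free product is an embeddable type II$_1$ factor: $\cal R$ is, and $L(\bb F_{r-(n-1)})$ is embeddable by the remarks preceding the statement. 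In either case $L(\bb F_r)$ is expressed as a free product $\cM_1 * \cdots * \cM_n$ of embeddable type II$_1$ von Neumann algebras, so Proposition \ref{freegroupexistentia} yields an existential embedding $L(\bb F_n) \hookrightarrow_1 L(\bb F_r)$.

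Now suppose $r = \infty$. Since $\cal R^{*(n-1)} \cong L(\bb F_{n-1})$ and $L(\bb F_{n-1}) * L(\bb F_\infty) \cong L(\bb F_\infty)$ by Fact \ref{interpfact}(2), we may write
\[
    \underbrace{\cal R * \cdots * \cal R}_{n-1} * L(\bb F_\infty) \cong L(\bb F_\infty),
\]
again an $n$-fold free product of embeddable type II$_1$ von Neumann algebras (with $L(\bb F_\infty)$ embeddable by the remarks preceding the statement). Proposition \ref{freegroupexistentia} again gives an existential embedding $L(\bb F_n) \hookrightarrow_1 L(\bb F_\infty)$.
\end{proof}
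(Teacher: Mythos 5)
Your proof is correct and follows essentially the same route as the paper's: both decompose $L(\bb F_r)$ as a free product of $n$ embeddable type II$_1$ factors and invoke Proposition \ref{freegroupexistentia}. The only (immaterial) differences are that you use copies of $\cal R$ together with one interpolated free group factor where the paper writes $L(\bb F_r)=L(\bb F_{r_1})*\cdots*L(\bb F_{r_n})$, and that you run the case $r=n$ through the proposition via $L(\bb F_n)\cong \cal R^{*n}$ rather than observing it is trivial.
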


\begin{proof}
If $r=n$, the result is trivial.  Thus, we may suppose that $r>n$.  We may then write $L(\bb F_r)=L(\bb F_{r_1})*\cdots*L(\bb F_{r_n})$ with $r_1,\ldots,r_n\in (1,\infty]$. Since each $L(\bb F_{r_i})$ is embeddable, the result follows from Proposition \ref{freegroupexistentia}.
\end{proof}

\begin{cor}\label{supcor}
$\Th_{\forall\exists}(L(\bb F_\infty)) =\sup_n \Th_{\forall\exists}(L(\bb F_n))$.
\end{cor}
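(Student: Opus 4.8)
The plan is to prove the asserted equality of functions (on the set of $\forall\exists$-sentences) by establishing the two inequalities separately.

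The inequality $\sup_{n\ge 2}\Th_{\forall\exists}(L(\bb F_n))\le \Th_{\forall\exists}(L(\bb F_\infty))$ I would obtain immediately from Corollary \ref{cor: fgf-ec-embeddings}: applied with $r=\infty$ it produces, for each $n\ge 2$, an existential embedding $L(\bb F_n)\hookrightarrow_1 L(\bb F_\infty)$, and, as observed in the paragraph before Lemma \ref{ecfree}, any existential embedding $\cM\hookrightarrow_1\cN$ forces $\Th_{\forall\exists}(\cM)\le\Th_{\forall\exists}(\cN)$; taking the supremum over $n$ finishes this half. (Since Corollary \ref{cor: fgf-ec-embeddings} likewise gives $L(\bb F_m)\hookrightarrow_1 L(\bb F_n)$ for $2\le m\le n$, this supremum is in fact an increasing limit.)

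The reverse inequality $\Th_{\forall\exists}(L(\bb F_\infty))\le\sup_{n\ge 2}\Th_{\forall\exists}(L(\bb F_n))$ is the real content, and I would derive it from the (standard) observation that a $\forall\exists$-sentence cannot take a larger value in the tracial von Neumann algebra generated by an increasing chain of subalgebras than the supremum of its values along the chain — applied to $L(\bb F_2)\subseteq L(\bb F_3)\subseteq\cdots$, whose union generates $L(\bb F_\infty)$ because $\bb F_\infty=\bigcup_n\bb F_n$. Concretely: fix a $\forall\exists$-sentence $\sigma=\sup_{\vec x}\inf_{\vec y}\varphi(\vec x,\vec y)$ with $\varphi$ quantifier-free, fix $\epsilon>0$, and let $\delta>0$ be a modulus of uniform continuity for $\varphi$ (one that works uniformly over all tracial von Neumann algebras on each fixed operator-norm ball). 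Given a tuple $\vec a$ from $L(\bb F_\infty)$ in the domain of $\vec x$, the Kaplansky density theorem applied to the $\ast$-subalgebra $\bigcup_n L(\bb F_n)$ yields some $n$ and a tuple $\vec a'$ from $L(\bb F_n)$ in the same ball with $d(\vec a,\vec a')<\delta$; since $\varphi$ is quantifier-free it is absolute between $L(\bb F_n)$ and $L(\bb F_\infty)$, and therefore
\[
\inf_{\vec y}\varphi(\vec a,\vec y)^{L(\bb F_\infty)}\;\le\;\epsilon+\inf_{\vec y\in L(\bb F_n)}\varphi(\vec a',\vec y)^{L(\bb F_n)}\;\le\;\epsilon+\sigma^{L(\bb F_n)}\;\le\;\epsilon+\sup_m\sigma^{L(\bb F_m)}.
\]
Taking the supremum over $\vec a$ and then letting $\epsilon\to 0$ gives $\sigma^{L(\bb F_\infty)}\le\sup_m\sigma^{L(\bb F_m)}$.

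I expect the only delicate points, all in the second half, to be purely about the bounded-quantifier formalism for tracial von Neumann algebras: that the Kaplansky approximant $\vec a'$ can be taken in the same operator-norm ball as $\vec a$ (it can, since Kaplansky density produces norm-controlled approximations, and on bounded balls the strong and $\|\cdot\|_2$ topologies agree), and that the modulus of uniform continuity of the quantifier-free $\varphi$ is independent of the ambient algebra (a general feature of continuous logic, as $\varphi$ is built from fixed connectives applied to $\ast$-polynomial and trace terms, all uniformly continuous on bounded balls). These are bookkeeping matters rather than genuine obstacles, so I do not foresee a hard step: the statement is essentially the combination of the existential embeddings of Corollary \ref{cor: fgf-ec-embeddings} with chain-continuity of $\forall\exists$-theories.
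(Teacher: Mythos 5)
Your proposal is correct and follows the paper's proof: the inequality $\sup_n\Th_{\forall\exists}(L(\bb F_n))\le\Th_{\forall\exists}(L(\bb F_\infty))$ comes from the existential embeddings of Corollary \ref{cor: fgf-ec-embeddings}, and the reverse inequality is the general fact that the $\forall\exists$-theory of (the algebra generated by) an increasing chain is bounded by the supremum along the chain, which the paper simply cites and you prove via Kaplansky density and uniform continuity. So this is essentially the same argument, with the cited model-theoretic fact spelled out in detail.
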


\begin{proof}
The inequality $\Th_{\forall\exists}L(\bb F_\infty)\geq \sup_n \Th_{\forall\exists}(L(\bb F_n))$ follows from Corollary \ref{cor: fgf-ec-embeddings}.  The other inequality follows from a general model-theoretic fact:  if $\cM$ is the union of a chain $(\cM_n)$ of structures, then $\Th_{\forall\exists}(\cM)\leq \sup_n \Th_{\forall\exists}(\cM_n)$. \footnote{This is a well-known fact, but we include a proof for the sake of the reader.  Fix a quantifier-free formula $\varphi(x,y)$ and set $\sigma:=\sup_x\inf_y\varphi(x,y)$. Fix also $a\in \cal M_n$ for some $n\in \bb N$.  Then $(\inf_y\varphi(a,y))^{\cM}\leq (\inf_y\varphi(a,y))^{\cM_n}\leq \sigma^{\cM_n}$.  Since the set-theoretic union of the $\cM_n$'s is a dense subset of $\cal M$, we have that $\sigma^{\cM}\leq \sup_n\sigma^{\cM_n}$.} 
\end{proof}

We now show how to generalize Corollary \ref{cor: fgf-ec-embeddings} to allow for both factors to be interpolated free group factors.  First, a general lemma:

\begin{lem}\label{amplifyexistential}
Suppose that $i:\cM \hookrightarrow_1 \cN$ is an existential embedding.  Then for any $t>0$, the map $i_t:\cM_t \hookrightarrow_1 \cN_t$ is an existential embedding.
\end{lem}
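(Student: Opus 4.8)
The plan is to reduce the general amplification statement to the two building-block cases --- compression by a projection and tensoring by a matrix algebra --- and handle each using the semantic characterization of existential embeddings recalled just before the lemma: namely, $i:\cM\hookrightarrow_1\cN$ iff there is an embedding $j:\cN\hookrightarrow\cM^\cU$ with $j\circ i$ equal to the diagonal embedding. I would first observe that, since amplifications compose via $(\cM_s)_u\cong\cM_{su}$, and since taking an amplification by $t$ can be written as first compressing by some $\ell\in(0,1)$ and then tensoring by $M_n(\bb C)$ (when $t=n\ell$), it suffices to treat these two operations separately; the general $t$ then follows by composing the two resulting existential embeddings (existential embeddings compose, as is immediate from the diagram characterization).

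For the compression case, fix a projection $p\in\cal P(\cM)$ with $\tau(p)=\ell$. Given $j:\cN\hookrightarrow\cM^\cU$ witnessing that $i$ is existential, I would restrict $j$ to $i(p)\cN i(p)$. Because $j\circ i$ is the diagonal embedding, $j(i(p))=p$ viewed in $\cM^\cU$, so $j$ maps $i(p)\cN i(p)$ into $p\cM^\cU p=(\cM^\cU)_\ell\cong(\cM_\ell)^\cU$, the last isomorphism being Lemma \ref{lemma: compr and ultrapower commute}. One checks that the composition $(j|_{i(p)\cN i(p)})\circ(i|_{p\cM p})$ is the diagonal embedding of $p\cM p$ into $(p\cM p)^\cU$, which gives that $i_\ell: p\cM p\hookrightarrow_1 i(p)\cN i(p)$ is existential; since all choices of projection of a given trace give canonically isomorphic compressions, this is exactly the statement $i_\ell:\cM_\ell\hookrightarrow_1\cN_\ell$.

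For the matrix-amplification case, I would use that $i\otimes\id_{M_n(\bb C)}:M_n(\cM)\to M_n(\cN)$ and, if $j:\cN\hookrightarrow\cM^\cU$ witnesses that $i$ is existential, then $j\otimes\id_{M_n(\bb C)}:M_n(\cN)\to M_n(\cM^\cU)\cong M_n(\cM)^\cU$ (the last isomorphism being the standard fact that $M_n(-)$ commutes with ultrapowers) has the property that $(j\otimes\id)\circ(i\otimes\id)$ is the diagonal embedding of $M_n(\cM)$. Hence $i\otimes\id_{M_n(\bb C)}$ is existential, i.e. $i_n:\cM_n\hookrightarrow_1\cN_n$. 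Composing the compression and matrix cases handles arbitrary $t=n\ell$.

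The main obstacle --- though it is more bookkeeping than a genuine difficulty --- is verifying that the various canonical identifications (compression independent of the chosen projection, $(\cM^\cU)_t\cong(\cM_t)^\cU$ from Lemma \ref{lemma: compr and ultrapower commute}, $M_n(\cM)^\cU\cong M_n(\cM^\cU)$) are compatible with the embeddings in the sense that the relevant triangles commute on the nose, so that the composite really is the diagonal embedding and not merely conjugate to it. I would note that this is precisely the same kind of diagram-chase already used in Lemma \ref{lemma: ee preserved under compression} and Lemma \ref{compresselememb}, so I would phrase it as "analogous arguments" once the two cases are set up, rather than belaboring the identifications.
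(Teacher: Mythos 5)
Your proposal is correct and is essentially the paper's own argument: both use the semantic witness $j:\cN\hookrightarrow\cM^\cU$ with $j\circ i$ the diagonal embedding, apply $i$ and $j$ to the compressed/matrix-amplified algebras, invoke the identification $(\cM^\cU)_t\cong(\cM_t)^\cU$, and observe that the composite is again the diagonal embedding. The only difference is organizational — you factor the amplification as compression followed by tensoring with $M_n(\bb C)$ and compose, whereas the paper handles both at once by applying the maps entrywise on $M_N(p\cM p)$ — which changes nothing of substance.
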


\begin{proof}
 Fix an embedding $\phi: \cN \hookrightarrow \cM^\cU$ such that $\phi \circ i$ is the diagonal embedding of $\cM$ into $\cM^\cU$. 
    
Choose some natural number $N$ and projection $p \in \cM$ of appropriate trace so that $\cM_t \cong M_N(p \cM p)$. Then define the map 
    \[
        i_t: 
        \cM_t \cong M_N(p \cM p) \hookrightarrow M_N(p \cN p) \cong \cN_t
    \]
    that applies the map $i$ entrywise in the matrices. Similarly define
    \[  
        \phi_t: M_N(p \cN p) \hookrightarrow M_N(p \cM^\cU p) \cong (\cM^\cU)_t\cong (\cM_t)^\cU
    \]
    as the map that applies $\phi$ entrywise in the matrices. Then $\phi_t \circ i_t$ is the diagonal embedding of $\cM_t$ into $(\cM_t)^\cU$, whence $i_t$ is existential.
\end{proof}

\begin{cor}\label{generalexistential}
For any $r,s\in (1,\infty]$ with $r\leq s$, there is an existential embedding of $L(\bb F_r)\hookrightarrow_1 L(\bb F_s)$, whence $\Th_{\forall \exists}(L(\F_r)) \leq \Th_{\forall \exists} (L(\F_s))$.
\end{cor}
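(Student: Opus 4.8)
The plan is to reduce the general case to Corollary \ref{cor: fgf-ec-embeddings} via an amplification trick, exactly as is done elsewhere in the paper when passing from integer to real parameters. First I would handle the case $s = \infty$ separately: if $r < \infty$, pick an integer $n \geq r$, use Corollary \ref{cor: fgf-ec-embeddings} to get an existential embedding $L(\bb F_n)\hookrightarrow_1 L(\bb F_\infty)$, and precompose with an existential embedding $L(\bb F_r)\hookrightarrow_1 L(\bb F_n)$ — but note this last embedding is itself an instance of what we are trying to prove, so it is cleaner to treat $s$ finite first and then deduce $s=\infty$ by the same amplification argument (or directly: write $L(\bb F_\infty) = L(\bb F_{r})*L(\bb F_\infty)$ and apply Proposition \ref{freegroupexistentia}-style reasoning, since $L(\bb F_r)$ is embeddable and type II$_1$).

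For the main case $1 < r \leq s < \infty$, the idea is to amplify down to the regime covered by Corollary \ref{cor: fgf-ec-embeddings}. Using Fact \ref{interpfact}(3), for any $t \in (0,\infty)$ we have $L(\bb F_r)_t \cong L(\bb F_{1 + (r-1)/t^2})$ and $L(\bb F_s)_t \cong L(\bb F_{1 + (s-1)/t^2})$. Choose $t$ large enough that both $1 + (r-1)/t^2$ and $1 + (s-1)/t^2$ are at most $2$; more to the point, choose $t$ so that $r' := 1 + (r-1)/t^2$ is an integer $\geq 2$ — concretely, pick any integer $n \geq 2$ and set $t := \sqrt{(r-1)/(n-1)}$, so that $L(\bb F_r)_t \cong L(\bb F_n)$. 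Then $s' := 1 + (s-1)/t^2 = 1 + (n-1)(s-1)/(r-1) \geq n$ since $s \geq r$. By Corollary \ref{cor: fgf-ec-embeddings} there is an existential embedding $L(\bb F_n)\hookrightarrow_1 L(\bb F_{s'})$. Now I would amplify back up: by Lemma \ref{amplifyexistential}, applying the amplification by $1/t$ gives an existential embedding $(L(\bb F_n))_{1/t} \hookrightarrow_1 (L(\bb F_{s'}))_{1/t}$. Since $(L(\bb F_n))_{1/t} = (L(\bb F_r)_t)_{1/t} \cong L(\bb F_r)$ and similarly $(L(\bb F_{s'}))_{1/t} \cong L(\bb F_s)$ (using $(\cal M_a)_b \cong \cal M_{ab}$), this yields the desired existential embedding $L(\bb F_r)\hookrightarrow_1 L(\bb F_s)$. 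The final clause $\Th_{\forall\exists}(L(\bb F_r)) \leq \Th_{\forall\exists}(L(\bb F_s))$ then follows from the general observation, already noted in the text, that an existential embedding $\cM\hookrightarrow_1\cN$ forces $\Th_{\forall\exists}(\cM)\leq\Th_{\forall\exists}(\cN)$.

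I do not expect a serious obstacle here; the argument is a routine combination of Corollary \ref{cor: fgf-ec-embeddings}, Lemma \ref{amplifyexistential}, and the compression identities of Fact \ref{interpfact}. The one point requiring a little care is bookkeeping with the amplification parameters — making sure that $s' \geq n$ (which is exactly where the hypothesis $r \leq s$ is used) and that the "amplify down, embed, amplify back up" composition really does land on $L(\bb F_r)$ and $L(\bb F_s)$ on the nose, which is guaranteed by the independence of the compression $\cM_t$ from the choice of projection together with $(\cM_a)_b \cong \cM_{ab}$. The $s = \infty$ endpoint is then immediate by running the same computation with $s' = \infty$, since $1 + (\infty)/t^2 = \infty$ and Corollary \ref{cor: fgf-ec-embeddings} already covers $L(\bb F_n)\hookrightarrow_1 L(\bb F_\infty)$.
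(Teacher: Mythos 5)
Your proposal is correct and is essentially the paper's own proof: the paper likewise chooses an amplification parameter $t$ so that the pair $(L(\bb F_r),L(\bb F_s))$ becomes $(L(\bb F_2)_t, L(\bb F_{s'})_t)$ with $s'\geq 2$, invokes Corollary \ref{cor: fgf-ec-embeddings}, and amplifies via Lemma \ref{amplifyexistential}; your version with a general integer $n$ and the parameter $1/t$ is the same argument up to relabeling, and your treatment of $s=\infty$ is likewise covered since Corollary \ref{cor: fgf-ec-embeddings} allows $r=\infty$.
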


\begin{proof}
Take $t>0$ such that $L(\bb F_2)_t\cong L(\bb F_r)$.  Take $s'\in (1,\infty]$ such that $L(\bb F_{s'})_t\cong L(\bb F_s)$; note that $2\leq s'$.  Then amplifying an existential embedding $L(\bb F_2)\hookrightarrow_1 L(\bb F_{s'})$ yields an existential embedding $L(\bb F_r) \hookrightarrow_1 L(\bb F_s)$ by Lemma \ref{amplifyexistential}.
\end{proof}

The above results motivate the following natural question:

\begin{question}\label{AEcoincide}
Is it true that $\Th_{\forall\exists}(L(\bb F_r))=\Th_{\forall\exists}(L(\bb F_s))$ for all $r,s\in (1,\infty]$?
\end{question}

A positive answer to Question \ref{AEcoincide} would follow from a positive answer to the following question:

\begin{question}\label{embeddingquestion}
Are there $1<r<s<\infty$ for which there exists an existential embedding $L(\bb F_s)\hookrightarrow_1 L(\bb F_r)$?  In particular, are there integers $n<m$ for which there is an existential embedding $L(\bb F_m)\hookrightarrow_1 L(\bb F_n)$ or are there $t\in (1,2)$ for which there is an existential embedding $L(\bb F_2)\hookrightarrow L(\bb F_t)$?
\end{question}

\begin{prop}
If Question \ref{embeddingquestion} has a positive answer, then $L(\bb F_y)\hookrightarrow L(\bb F_x)$ for any $1<x<y<\infty$.  In particular, a positive answer to Question \ref{embeddingquestion} implies a positive answer to Question \ref{AEcoincide}.
\end{prop}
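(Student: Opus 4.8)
The plan is to bootstrap from the single existential embedding provided by Question~\ref{embeddingquestion} to arbitrary pairs, using the same two-step machinery (amplification plus free products) that powered Corollary~\ref{generalexistential}. Suppose Question~\ref{embeddingquestion} has a positive answer, so there exist $1<r_0<s_0<\infty$ and an existential embedding $L(\bb F_{s_0})\hookrightarrow_1 L(\bb F_{r_0})$. First I would set $u:=\left(\tfrac{s_0-1}{r_0-1}\right)^{1/2}>1$, so that $L(\bb F_{r_0})_u\cong L(\bb F_{s_0})$ by Fact~\ref{interpfact}(3); thus we actually have an existential embedding from $L(\bb F_{r_0})_u$ into $L(\bb F_{r_0})$, i.e. the ``compression direction'' $L(\bb F_{r_0})_u\hookrightarrow_1 L(\bb F_{r_0})$. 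By Lemma~\ref{amplifyexistential}, amplifying by any $t>0$ gives $L(\bb F_{r_0})_{ut}\hookrightarrow_1 L(\bb F_{r_0})_t$; since every finite interpolated free group factor is an amplification of every other (Fact~\ref{interpfact}(3)), this already shows that for each fixed finite index $x$ there is an existential embedding $L(\bb F_{x'})\hookrightarrow_1 L(\bb F_x)$ whenever $x'$ is the interpolated free group factor obtained from $L(\bb F_x)$ by compressing by $u$ — that is, whenever $x'-1 = u^{-2}(x-1)\cdot\gamma^{2}$ for the appropriate bookkeeping. Iterating the embedding $L(\bb F_{r_0})_u\hookrightarrow_1 L(\bb F_{r_0})$ and composing, we get existential embeddings $L(\bb F_{r_0})_{u^k}\hookrightarrow_1 L(\bb F_{r_0})$ for every $k\in\bb N$.

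Next I would combine this with the ``increasing'' direction already established in Corollary~\ref{generalexistential}: for any $r\le s$ there is an existential embedding $L(\bb F_r)\hookrightarrow_1 L(\bb F_s)$. Composing a chain of such embeddings with iterates of the compression embedding, and using transitivity of existential embeddings (the composition of existential embeddings is existential, since the witnessing maps into ultrapowers compose appropriately), it suffices to show: given any target $1<x<\infty$, the set of $y\ge x$ admitting an existential embedding $L(\bb F_y)\hookrightarrow_1 L(\bb F_x)$ is cofinal in $(x,\infty)$; then for arbitrary $y$ with $x<y<\infty$ we pick such a $y'\ge \max\{y, \text{something}\}$, embed $L(\bb F_{y'})\hookrightarrow_1 L(\bb F_x)$ existentially, and precompose with the increasing existential embedding $L(\bb F_y)\hookrightarrow_1 L(\bb F_{y'})$ from Corollary~\ref{generalexistential}. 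To get cofinality: starting from $L(\bb F_x)$, an appropriate amplification $L(\bb F_x)_{t}$ compressed by $u$ sits existentially inside $L(\bb F_x)_t$; choosing $t<1$ small drives $L(\bb F_x)_t\cong L(\bb F_{1+(x-1)/t^2})$ to have arbitrarily large free dimension, and the compression-by-$u$ factor $L(\bb F_x)_{ut}$ has free dimension $1+(x-1)/(ut)^2$, which is still arbitrarily large as $t\to 0$. One then transports the existential embedding $L(\bb F_x)_{ut}\hookrightarrow_1 L(\bb F_x)_t$ through the isomorphism $L(\bb F_x)_t\cong L(\bb F_x)$ coming from the hypothesis (here we use that the \emph{first-order} theory, hence existential embeddability, is unaffected by passing between factors whose indices differ by the established ratio); iterating yields existential embeddings $L(\bb F_{y_k})\hookrightarrow_1 L(\bb F_x)$ with $y_k\to\infty$, giving cofinality.

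The main obstacle is precisely the last bookkeeping point: a single existential embedding $L(\bb F_{s_0})\hookrightarrow_1 L(\bb F_{r_0})$ only provides the compression ratio $u$ for the \emph{one} pair $(r_0,s_0)$, and amplifying by $t$ via Lemma~\ref{amplifyexistential} moves both endpoints simultaneously, so one must check that the multiplicative semigroup of ratios one can reach — generated by $u^{\pm1}$ together with the "free product with embeddable II$_1$ factor" moves of Corollary~\ref{generalexistential} — actually lets one connect \emph{any} two finite indices in the downward (existential) direction, not merely indices related by a power of $u$. The clean way around this is to observe that Corollary~\ref{generalexistential} already gives \emph{all} upward existential embeddings for free, so we only need the downward direction to be cofinal rather than surjective; combining one cofinal downward family with the full upward family, transitivity of existential embeddings closes the argument. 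Once $L(\bb F_y)\hookrightarrow_1 L(\bb F_x)$ for all $1<x<y<\infty$, the inequality $\Th_{\forall\exists}(L(\bb F_y))\le \Th_{\forall\exists}(L(\bb F_x))$ together with the reverse inequality from Corollary~\ref{generalexistential} gives equality, i.e. a positive answer to Question~\ref{AEcoincide}, and extending to $L(\bb F_\infty)$ via Corollary~\ref{supcor} finishes.
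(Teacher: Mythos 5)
Your core route---amplify the hypothesized downward existential embedding via Lemma \ref{amplifyexistential} and sandwich it between the upward existential embeddings of Corollary \ref{generalexistential}, using that compositions of existential embeddings are existential---is exactly the paper's: there one fixes $t$ with $1+\frac{r-1}{t^2}\le x<y\le 1+\frac{s-1}{t^2}$ and composes $L(\bb F_y)\hookrightarrow_1 L(\bb F_s)_t\hookrightarrow_1 L(\bb F_r)_t\hookrightarrow_1 L(\bb F_x)$. Your additional iteration (composing amplified copies of the given embedding to get downward existential embeddings of arbitrarily large ratio) is a reasonable way to handle pairs with $\frac{y-1}{x-1}>\frac{s-1}{r-1}$, for which a single such $t$ does not exist, so the skeleton is sound. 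One small slip: with the convention $L(\bb F_r)_t\cong L(\bb F_{1+(r-1)/t^2})$, compression \emph{raises} free dimension, so the $u$ with $L(\bb F_{r_0})_u\cong L(\bb F_{s_0})$ is $\bigl(\tfrac{r_0-1}{s_0-1}\bigr)^{1/2}<1$, not its reciprocal.

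The genuine problem is your justification of cofinality: you ``transport'' the embedding $L(\bb F_x)_{ut}\hookrightarrow_1 L(\bb F_x)_t$ ``through the isomorphism $L(\bb F_x)_t\cong L(\bb F_x)$ coming from the hypothesis.'' No such isomorphism is available. A positive answer to Question \ref{embeddingquestion} supplies only an existential embedding $L(\bb F_{s_0})\hookrightarrow_1 L(\bb F_{r_0})$; it gives neither an isomorphism nor even elementary equivalence between factors whose indices differ by that ratio (your parenthetical about the first-order theory being ``unaffected'' assumes exactly what is unknown), and $L(\bb F_x)_t\cong L(\bb F_x)$ for $t\neq 1$ is nontriviality of the fundamental group, which by Theorem \ref{thm: fgf-alternative} would already settle the isomorphism problem. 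Fortunately the detour is unnecessary: since every $L(\bb F_x)$ with $x$ finite is an amplification of $L(\bb F_{r_0})$, Lemma \ref{amplifyexistential} applied to the given embedding yields $L(\bb F_x)_u\hookrightarrow_1 L(\bb F_x)$ directly for every such $x$, and iterating and composing this (your own earlier step, carried out at $x$ rather than at $r_0$) produces existential embeddings $L(\bb F_{y_k})\hookrightarrow_1 L(\bb F_x)$ with $y_k\to\infty$. Precomposing with $L(\bb F_y)\hookrightarrow_1 L(\bb F_{y_k})$ from Corollary \ref{generalexistential} then gives the desired embedding, and your final passage to equality of the $\forall\exists$-theories (including $L(\bb F_\infty)$ via Corollary \ref{supcor}) is fine as stated.
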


\begin{proof}
    For ease of notation, we assume that there is an existential embedding $L(\F_3) \hookrightarrow_1 L(\F_2)$.  Let $1<x < y<\infty$ be arbitrary. Take $t$ such that $L(\F_3)_t = L(\F_y)$. By amplifying the original existential embedding, we have $L(\F_3)_t \hookrightarrow L(\F_2)_t$, and by combining this embedding with the existential embeddings in Corollary \ref{generalexistential}, we have that $\Th_{\forall \exists} (L(F_z))$ is constant for all $z \in [1 + \frac{1}{t^2}, 1 + \frac{2}{t^2}]$. If $x$ lies in this interval, we are done since $1 + \frac{2}{t^2} = y$. Otherwise, we can repeat this argument with $t_1 = \sqrt{2}t$ in place of $t$ to obtain that $\Th_{\forall \exists} (L(F_z))$ is constant for all $z \in [1 + \frac{1}{2t^2}, 1 + \frac{1}{t^2}]$. By iterating this argument with $t_n = \sqrt{2}^nt$, and choosing $n$ so that $x \in [1 + \frac{1}{2^n t^2}, 1 + \frac{1}{2^{n-1} t^2}]$, we may chain together embeddings so that
    \[
        L(\F_y) = L(\F_3)_t \hookrightarrow_1 L(\F_2)_t = L(\F_3)_{t_1} \hookrightarrow_1 \cdots \hookrightarrow_1 L(\F_2)_{t_{n-1}} = L(\F_3)_{t_n} \hookrightarrow_1 L(\F_2)_{t_n}.
    \]
    Finally, $L(\F_2)_{t_n} \hookrightarrow_1 L(\F_x)$ by the choice of $n$ and Corollary \ref{generalexistential}.
\end{proof}

% \begin{proof}
%    Suppose that there are $1<r<s<\infty$ for which there exists an existential embedding $L(\bb F_s)\hookrightarrow_1 L(\bb F_r)$.  Fix $1<x<y<\infty$.  Fix $t>0$ sufficiently small so that $1+\frac{r-1}{t^2}\leq x<y\leq 1+\frac{s-1}{t^2}$.  Using Lemma \ref{amplifyexistential} and Corollary \ref{generalexistential}, we have that $$L(\bb F_y)\hookrightarrow_1 L(\bb F_s)_t\hookrightarrow_1 L(\bb F_r)_t\hookrightarrow_1 L(\bb F_x),$$ as desired.
% \end{proof}

\begin{remark}
    A natural idea for answering the latter two parts of Question \ref{embeddingquestion} would be to attempt to write $L(\bb F_n)$ as a free product of $m$ many II$_1$ factors and to write $L(\bb F_t)$ as a free product of two II$_1$ factors.  However, both of these questions are open at the moment and a positive solution to either would show that free entropy dimension of Voiculescu \cite{Voi-AnaloguesII} is not independent of generators.
\end{remark}

\begin{remark}
Recall from Fact \ref{interpfact} that $L(\bb F_2)\cong (\cal R*\cal R*L(\bb F_3))\otimes M_2(\bb C)$.  We point out that if an existential embedding $L(\bb F_3)\hookrightarrow_1 L(\bb F_2)$ existed, then viewing this embedding as an existential embedding $L(\bb F_3)\hookrightarrow_1 (\cal R*\cal R*L(\bb F_3))\otimes M_2(\bb C)$, we could not have that the image is contained in $\cal R*\cal R*L(\bb F_3)$ since $L(\bb F_3)$ does not have property Gamma.
%and therefore does not have a copy of $M_2(\bb C)$ in its relative commutant in an ultrapower.
\end{remark}

A positive answer to Question \ref{AEcoincide} follows from the group $\cal F$ being nontrivial:

\begin{prop}
Suppose that $\cal F \not=\{1\}$.  Then for all $r,s\in (1,\infty]$, we have $$\Th_{\forall \exists}(L(\F_r)) =\Th_{\forall \exists}(L(\F_s)).$$
\end{prop}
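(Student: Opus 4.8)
The plan is to combine the non-triviality of $\mathcal F$ with the amplification formula for interpolated free group factors (Fact \ref{interpfact}(3)) and the existential embeddings already produced in Corollary \ref{generalexistential} and Lemma \ref{amplifyexistential}. The key point is that Corollary \ref{generalexistential} gives us existential embeddings going ``upward'' (from smaller index to larger index), and a non-trivial $\mathcal F$ will let us convert one such upward step into a ``downward'' step via an amplification, thereby trapping all the $\forall\exists$-theories between each other.

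First I would fix $t\in \mathcal F\cap(0,1)$; since $\mathcal F$ is a subgroup of $\bR_+$, also $t^{-1}\in\mathcal F$. The defining property is that $L(\bb F_r)_t\equiv L(\bb F_r)$ for every $r\in(1,\infty)$; more precisely, since all finite-index interpolated free group factors are mutual amplifications (Fact \ref{interpfact}(3)), Theorem \ref{thm: trichotomy} tells us $\mathcal F$ is the common first-order fundamental group, so $L(\bb F_r)_s\equiv L(\bb F_r)$ whenever $s\in\mathcal F$. Fix now $r<s$ in $(1,\infty)$ and choose $u>0$ with $L(\bb F_s)_u\cong L(\bb F_r)$, i.e. $r = 1+\frac{s-1}{u^2}$, so $u>1$. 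The idea is: applying Lemma \ref{amplifyexistential} to the existential embedding $L(\bb F_r)\hookrightarrow_1 L(\bb F_s)$ from Corollary \ref{generalexistential} and amplifying by $u$ gives an existential embedding $L(\bb F_r)_u\hookrightarrow_1 L(\bb F_s)_u\cong L(\bb F_r)$. Now $L(\bb F_r)_u$ is again some interpolated free group factor $L(\bb F_{r'})$ with $r'=1+\frac{r-1}{u^2}<r$. So we have produced a \emph{downward} existential embedding $L(\bb F_{r'})\hookrightarrow_1 L(\bb F_r)$ with $r'<r$, hence $\Th_{\forall\exists}(L(\bb F_r))\le\Th_{\forall\exists}(L(\bb F_{r'}))$; combined with the reverse inequality from Corollary \ref{generalexistential}, $\Th_{\forall\exists}(L(\bb F_r))=\Th_{\forall\exists}(L(\bb F_{r'}))$.

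Of course a single such downward step only identifies theories at a fixed ratio $r/r'$. To finish, I would iterate, using the fact that $\mathcal F\setminus\{1\}$ contains elements arbitrarily close to $1$ (if $\mathcal F=\bR_+$ this is clear; if $\mathcal F=\alpha^{\bb Z}$ then $t=\alpha^{-1}$ is a generator and the semigroup it generates is still enough, combined with Proposition \ref{continuity}/Corollary \ref{cor: continuity} to take ultralimits). Concretely: given any $1<x<y<\infty$, first use Corollary \ref{generalexistential} to get $\Th_{\forall\exists}(L(\bb F_x))\le\Th_{\forall\exists}(L(\bb F_y))$; for the reverse, chain finitely many of the downward existential embeddings constructed above (each with ratio a power of the chosen $t\in\mathcal F$) together with upward embeddings from Corollary \ref{generalexistential} to get from $L(\bb F_y)$ back down to (an interpolated free group factor existentially embeddable in) $L(\bb F_x)$, yielding $\Th_{\forall\exists}(L(\bb F_y))\le\Th_{\forall\exists}(L(\bb F_x))$. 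Finally, for the case $s=\infty$ invoke Corollary \ref{supcor}: $\Th_{\forall\exists}(L(\bb F_\infty))=\sup_n\Th_{\forall\exists}(L(\bb F_n))$, and since all the finite ones are now equal, this supremum equals the common value, so $\Th_{\forall\exists}(L(\bb F_\infty))$ coincides with it as well.

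The main obstacle I anticipate is the bookkeeping needed to pass from ``theories agree whenever the ratio lies in the subgroup generated by a single $t\in\mathcal F$'' to ``theories agree for all pairs $r,s$''. In the case $\mathcal F=\bR_+$ this is immediate since we can realize any ratio directly. In the case $\mathcal F=\alpha^{\bb Z}$ one must be slightly careful: the downward existential embeddings only produce ratios that are powers of a fixed number, so to reach an arbitrary target ratio one combines these with the (dense, by upward existential embeddings and then compression) monotonicity from Corollary \ref{generalexistential} plus a limiting argument via Corollary \ref{cor: continuity}; one should check this limiting step genuinely preserves the $\forall\exists$-theory, which it does because $\prod_\cU L(\bb F_{r_k})\cong L(\bb F_r)^\cU$ and $\forall\exists$-theory (indeed full theory) is an ultrapower invariant. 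This is routine but deserves to be spelled out carefully.
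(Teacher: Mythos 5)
There is a genuine gap at the heart of your argument, and it sits exactly where the hypothesis $\mathcal F\neq\{1\}$ is supposed to do its work. In your key step you amplify the upward existential embedding $L(\bb F_r)\hookrightarrow_1 L(\bb F_s)$ by the number $u$ determined by $L(\bb F_s)_u\cong L(\bb F_r)$. This $u$ is dictated by $r$ and $s$ and there is no reason for it to lie in $\mathcal F$; indeed your chosen $t\in\mathcal F$ is never used in this step. What the amplification produces is an existential embedding $L(\bb F_{r'})\hookrightarrow_1 L(\bb F_r)$ with $r'=1+\frac{r-1}{u^2}<r$, and since existential embeddings increase $\forall\exists$-theories in the direction of the embedding, this only gives $\Th_{\forall\exists}(L(\bb F_{r'}))\leq\Th_{\forall\exists}(L(\bb F_r))$ --- which is already an instance of Corollary \ref{generalexistential} and carries no new information. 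Your claim that it yields $\Th_{\forall\exists}(L(\bb F_r))\leq\Th_{\forall\exists}(L(\bb F_{r'}))$ has the inequality backwards; to get that direction you would need an existential embedding of the larger-index factor into the smaller-index one, which is precisely the open Question \ref{embeddingquestion}, or else you would need $u\in\mathcal F$, which you have not arranged. The subsequent iteration and ultralimit discussion inherits this problem and cannot repair it.

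The correct use of $\mathcal F\neq\{1\}$ (and the paper's argument) is a one-step sandwich that requires no iteration and no limiting argument: given $r\leq s<\infty$, choose $t\in\mathcal F$ so small that $r':=1+\frac{r-1}{t^2}>s$ (possible since $\mathcal F$ is a nontrivial subgroup of $\bb R_+$, hence contains arbitrarily small elements). Because $t\in\mathcal F$ we have $L(\bb F_{r'})\cong L(\bb F_r)_t\equiv L(\bb F_r)$, so these share their \emph{full} theory, and then monotonicity (Corollary \ref{generalexistential}) gives $\Th_{\forall\exists}(L(\bb F_r))\leq\Th_{\forall\exists}(L(\bb F_s))\leq\Th_{\forall\exists}(L(\bb F_{r'}))=\Th_{\forall\exists}(L(\bb F_r))$. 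In particular your worry about $\mathcal F=\alpha^{\bb Z}$ only realizing ratios in a cyclic group is a red herring: one never needs the ratio of $r$ to $s$ to lie in $\mathcal F$, only some element of $\mathcal F$ small enough to push $r$ past $s$. Your treatment of $s=\infty$ via Corollary \ref{supcor} is fine and agrees with the paper.
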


\begin{proof}
Without loss of generality, assume that $r\leq s$.  By Corollary \ref{supcor}, we may also suppose that $s<\infty$.  We already know that $\Th_{\forall \exists}(L(\bb F_r))\leq \Th_{\forall \exists}(L(\bb F_s))$.  Take $t \in \cal F $  sufficiently small such that $r':=1+\frac{r-1}{t^2} >s$.  We then have $\Th_{\forall \exists}(L(\bb F_s))\leq \Th_{\forall \exists}(L(\bb F_{r'}))=\Th_{\forall\exists}(L(\bb F_r)_t)=\Th_{\forall \exists}(L(\bb F_r))$. 
\end{proof}

\subsection{Applying Sela's theorem}

The following is implicit in the proof of \cite[Theorem A]{KE-DiagEmb2022}: 

\begin{lem}\label{existentialinclusiongroup}
Suppose that $i:\Gamma \hookrightarrow_1 \Lambda$ is an existential embedding of groups.  Then $L(i):L(\Gamma) \hookrightarrow_1 L(\Lambda)$ is an existential inclusion of tracial von Neumann algebras.
\end{lem}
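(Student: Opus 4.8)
The plan is to deduce the lemma from the semantic characterizations of existential embeddings on both sides, transporting the group-theoretic one to tracial von Neumann algebras via the functor $L(-)$. First I would invoke the classical first-order analogue of the characterization recalled in Section 2.1: since $i:\Gamma\hookrightarrow_1\Lambda$ is an existential embedding of groups, there are an ultrafilter $\cU$ and a group embedding $j:\Lambda\hookrightarrow\Gamma^{\cU}$ such that $j\circ i$ is the diagonal embedding $\delta:\Gamma\to\Gamma^{\cU}$. By the corresponding characterization for tracial von Neumann algebras, it then suffices to produce an embedding $J:L(\Lambda)\hookrightarrow L(\Gamma)^{\cU}$ for which $J\circ L(i)$ is the diagonal embedding $L(\Gamma)\hookrightarrow L(\Gamma)^{\cU}$.

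The key ingredient I would establish is a canonical trace-preserving embedding $\iota\colon L(\Gamma^{\cU})\hookrightarrow L(\Gamma)^{\cU}$, defined on canonical unitaries by $u_{[\gamma_n]_{\cU}}\mapsto (u_{\gamma_n})_{\cU}$. One checks this is well defined (two sequences that agree $\cU$-almost everywhere define the same unitary in the tracial ultrapower), multiplicative, and $*$-preserving, so it extends to a $*$-homomorphism on $\bb C[\Gamma^{\cU}]$. The point requiring a small verification is that it is trace-preserving, which reduces to the observation that $[\gamma_n]_{\cU}$ is the identity of $\Gamma^{\cU}$ if and only if $\gamma_n=1$ for $\cU$-almost all $n$, that is, if and only if $\lim_{\cU}\tau(u_{\gamma_n})=1$. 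Faithfulness of the trace then makes $\iota$ isometric for $\|\cdot\|_2$, so it extends to an injective embedding of $L(\Gamma^{\cU})$ into $L(\Gamma)^{\cU}$. Using functoriality of $L(-)$ on injective group homomorphisms, the embedding $j$ induces a trace-preserving (hence injective) embedding $L(j):L(\Lambda)\hookrightarrow L(\Gamma^{\cU})$, and I would set $J:=\iota\circ L(j)$.

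To finish, I would note that $\iota\circ L(\delta)$ and the diagonal embedding $\Delta_\Gamma\colon L(\Gamma)\hookrightarrow L(\Gamma)^{\cU}$ are two trace-preserving $*$-homomorphisms $L(\Gamma)\to L(\Gamma)^{\cU}$ that agree on every generator $u_\gamma$ (as $L(\delta)(u_\gamma)=u_{[\gamma]_{\cU}}$ and $\iota(u_{[\gamma]_{\cU}})=(u_\gamma)_{\cU}=\Delta_\Gamma(u_\gamma)$), hence are equal. Therefore
\[
  J\circ L(i)=\iota\circ L(j)\circ L(i)=\iota\circ L(j\circ i)=\iota\circ L(\delta)=\Delta_\Gamma,
\]
which is exactly the factorization needed, so $L(i):L(\Gamma)\hookrightarrow L(\Lambda)$ is existential.

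The argument is mostly a matter of assembling standard functorialities, so I do not anticipate a serious obstacle. The one genuinely substantive point is the construction of the canonical embedding $L(\Gamma^{\cU})\hookrightarrow L(\Gamma)^{\cU}$ together with the verification that it preserves the canonical trace; a secondary point to be careful about is citing the group-theoretic version of the ``factoring through an ultrapower'' characterization of existential embeddings, which is the exact classical first-order analogue of the statement quoted for tracial von Neumann algebras in Section 2.1 and should be attributed to standard model theory.
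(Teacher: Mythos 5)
Your proposal is correct and follows essentially the same route as the paper: factor the existential group embedding through $\Gamma^{\cU}$, apply $L(-)$, compose with the canonical embedding $L(\Gamma^{\cU})\hookrightarrow L(\Gamma)^{\cU}$, and observe that the composite is the diagonal embedding. The only difference is that you construct and verify that canonical embedding by hand, whereas the paper simply cites it (Lemma 3.2 of the K\"ohler--Elayavalli diagonal-embedding paper), so your write-up is just a self-contained version of the same argument.
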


\begin{proof}
The inclusions $\Gamma\hookrightarrow \Lambda\hookrightarrow \Gamma^{\cal U}$ yields inclusions of tracial von Neumann algebras $L(\Gamma)\hookrightarrow L(\Lambda)\hookrightarrow L(\Gamma^{\cal U})$.  By \cite[Lemma 3.2]{KE-DiagEmb2022}, the natural map $\Gamma^{\cal U}\to L(\Gamma)^{\cal U}$ yields an embedding $L(\Gamma^{\cal U})\hookrightarrow L(\Gamma)^{\cal U}$. 
The composition of all these embeddings is the diagonal embedding $L(\Gamma)\hookrightarrow L(\Gamma)^{\cal U}$.
% Alternatively, one could argue syntactically.  An approximate witness in $L(\Lambda)$ to an existential statement with parameters in $L(\Gamma)$ might as well be taken to belong to $\bb C[\Lambda]$; but then, this should boil down to group elements in $\Lambda$ which do or do not multiply to the identity, and then since $\Gamma$ is existential in $\Lambda$, these group elements can be found in $\Gamma$.
\end{proof}

The following major result is due to Sela \cite[Theorem 4]{Sela-FGF-EE}:

\begin{fact}\label{selafact}
The canonical embeddings $\bb F_m\hookrightarrow \bb F_n$ for $m\leq n$ are elementary.
\end{fact}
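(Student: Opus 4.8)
The stated result is cited rather than reproved: it is one of the deepest theorems in the first-order theory of groups, due to Sela \cite[Theorem 4]{Sela-FGF-EE} (with the underlying elementary-equivalence component obtained independently by Kharlampovich and Myasnikov). Were one to reconstruct the argument, the plan would proceed in two stages. The first stage is Sela's resolution of the Tarski problem: all nonabelian free groups are elementarily equivalent. This is the hard part and it is not a corollary of anything assembled above; it requires essentially the whole \emph{Diophantine geometry over groups} program --- the structure theory of limit groups, the Makanin--Razborov description of $\operatorname{Hom}(G,\bb F)$ for finitely generated $G$, and the ``formal solutions''/``test sequences'' machinery that handles $\forall\exists$- and higher-quantifier formulae and ultimately yields a form of quantifier elimination down to boolean combinations of $\forall\exists$-sentences.

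The second stage upgrades elementary equivalence to the statement that the \emph{canonical} inclusion $\bb F_m\hookrightarrow \bb F_n$ (for $2\le m\le n$) is elementary. The cleanest route is through Sela's analysis of which subgroups of a free --- indeed of a torsion-free hyperbolic --- group are elementary: a finitely generated nonabelian subgroup $H$ is an elementary subgroup exactly when the ambient group is a \emph{hyperbolic tower} over $H$, with the ``tower $\Rightarrow$ elementary'' implication due to Sela and the converse to Perin. A noncyclic free factor is the simplest instance of such a tower structure, so writing $\bb F_n=\bb F_m*\bb F_{n-m}$ exhibits $\bb F_m$ as an elementary subgroup of $\bb F_n$; in fact this special case --- a noncyclic free factor of a free group is an elementary subgroup --- is recorded directly as Theorem~4 of \cite{Sela-FGF-EE}. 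Unwound, for each first-order $\varphi(\vec x)$ and each tuple $\vec a$ from $\bb F_m$ one would produce a test sequence of homomorphisms adapted to the pair $(\bb F_n,\bb F_m)$ witnessing that $\varphi(\vec a)$ holds in $\bb F_n$ if and only if it holds in $\bb F_m$.

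The main obstacle is thus located entirely in the first stage together with the ``tower $\Rightarrow$ elementary'' half of the second: there is no shortcut from the operator-algebraic or model-theoretic facts developed earlier, and the Fact must genuinely be invoked as a black box. Its role in what follows is as the group-theoretic input one would like to transport along the operation $\Gamma\mapsto L(\Gamma)$: via Lemma~\ref{existentialinclusiongroup} it already yields that $L(\bb F_m)\hookrightarrow L(\bb F_n)$ is an existential embedding, whereas whether the \emph{full} elementarity survives the passage to group von Neumann algebras is precisely the kind of delicate issue flagged in the introduction.
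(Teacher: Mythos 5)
Your treatment matches the paper exactly: the statement is recorded there as a Fact cited to \cite[Theorem 4]{Sela-FGF-EE} with no proof given, and you correctly invoke it as a black box (your sketch of the Tarski/hyperbolic-tower background and of its role via Lemma \ref{existentialinclusiongroup} is accurate but not required). Nothing further is needed.
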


Lemma \ref{existentialinclusiongroup} and Fact \ref{selafact} yield an alternate proof of the fact that $L(\bb F_m)$ embeds existentially into $L(\bb F_n)$ for $m\leq n$ (although it uses a much heavier hammer).  Even better, this argument shows that the canonical embedding is existential.

% \textcolor{blue}{The following two remarks used to be questions.}

\begin{remark}
In \cite{PopaShly-FGFasGroupFactors2020}, Popa and Shylakhtenko construct groups $\Gamma_r$ such that $L(\Gamma_r)=L(\bb F_r)$ for all $r\in (1,\infty)$.  However, these groups have torsion and so cannot be models of $\operatorname{Th}(\bb F_n)$ and thus the previous methods cannot immediately be extended to cover interpolated free group factors.  It remains an open question whether such groups $\Gamma_r$ can be constructed that are also torsion-free.
\end{remark}

\begin{remark}
The group analogue of the first question in Question \ref{embeddingquestion} has a negative solution, as pointed out to us by Rizos Sklinos (which he attributes to Chloe Perin).  Indeed, suppose towards a contradiction that there is an existential embedding $i:\bb F_m\hookrightarrow \bb F_n$ with $m>n$.  Let $a_1,\ldots,a_m\in \bb F_m$ be free generators of $\bb F_m$.  Since $i$ is existential and each $i(a_j)$ can be written as a word in the generators of $\bb F_n$, there are $b_1,\ldots,b_n\in \bb F_m$ such that each $a_j$ can be written as a word in $b_1,\ldots,b_n$.  We then note that $b_1,\ldots,b_n$ generate $\bb F_m$, contradicting the fact that one cannot generate $\bb F_m$ with fewer than $m$ elements. 
\end{remark}

% In connection with the previous remark, we can show that $L(\bb F_5)$ cannot admit an existential embedding in $L(\bb F_2)$.  In fact, something more general holds:

% \begin{lem}
% For any $r\in (1,\infty)$ and any $n\in \bb N$, $n\geq 2$, we have that $L(\bb F_r)$ does not admit an existential embedding in $L(\bb F_{1+\frac{r-1}{n^2}})$. 
% \end{lem}

% \begin{proof}
% This follows from the fact that if $M$ does not have property Gamma, then $M$ does not admit an existential embedding in $M_n(M)$ for any $n\geq 2$.
% \end{proof}

% --- OLD SECTION for e.c. embedding of matrix ultraprods into FGF

\subsection{Applying a theorem of Popa}

One can also prove a special case of Corollary \ref{cor: fgf-ec-embeddings} using the following theorem of Popa, which is a special case of \cite[Corollary 4.4]{Popa-IndepinSubalgOfUP}:

\begin{fact}\label{popafact}
Suppose that $\cN_1$ and $\cN_2$ are separable subalgebras of $\cM^\cU$.  Then there is a unitary $u\in \cM^\cU$ such that $\cN_1$ and $u\cN_2u^*$ are freely independent, whence $\cN_1\vee u\cN_2u^*\cong \cN_1*\cN_2$.
\end{fact}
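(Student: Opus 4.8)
The plan is to derive Fact~\ref{popafact} from two standard ingredients: the observation that free independence of two subalgebras amounts to a \emph{countable} family of closed conditions once one fixes countable weakly dense subsets, together with the $\aleph_1$-saturation of the ultrapower $\cM^\cU$. Here $\cM$ should be taken to be a II$_1$ factor; some such hypothesis is necessary, since if $\cM$ is abelian then no two non-scalar subalgebras of $\cM^\cU$ can ever be free.

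First I would set up the reduction. Fix countable, self-adjoint subsets $D_i\subseteq\cN_i$ that are weakly dense in the trace-zero part of $\cN_i$ (for instance, trace-centered noncommutative $*$-polynomials evaluated at countable generating sets). Because the trace on $\cM^\cU$ is a normal state and only norm-bounded elements are involved, $\cN_1$ and $u\cN_2u^*$ are freely independent if and only if $\tau(c_1c_2\cdots c_m)=0$ for every $m$ and every alternating tuple $(c_1,\dots,c_m)$ whose entries come alternately from $D_1$ and from $uD_2u^*$. This is a countable list of conditions on the single unitary variable $u$, each of the form $|\tau(w(u,\vec a,\vec b))|=0$ for a fixed word $w$ with parameters $\vec a$ from $D_1$ and $\vec b$ from $D_2$; together with the (type-definable) requirement that $u$ be a unitary they form a type $p(u)$ over $\cM^\cU$. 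As $\cM^\cU$ is countably saturated, it then suffices to show that every finite subset of $p(u)$ is \emph{approximately} satisfiable in $\cM^\cU$, for then $p(u)$ is realized by a genuine unitary $u\in\cM^\cU$, which is exactly what the Fact asserts. The identification $\cN_1\vee u\cN_2u^*\cong\cN_1*\cN_2$ is then the universal property of the tracial free product: freeness pins down the trace on the algebraic free product, and faithfulness of that trace turns the canonical surjection into an isomorphism after passing to the GNS completion.

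The heart of the matter is therefore the \emph{local} statement: given finitely many trace-zero elements $a_1,\dots,a_p\in\cN_1$ and $b_1,\dots,b_q\in\cN_2$ of norm $\le C$, together with $\varepsilon>0$ and $L\in\bb N$, there is a unitary $u\in\cM^\cU$ for which every alternating product of length $\le L$ in $\{a_i\}$ and $\{ub_ju^*\}$ has trace of modulus $<\varepsilon$. I would prove this by lifting: choose norm-bounded, trace-zero lifts $a_i^{(n)},b_j^{(n)}\in\cM$ representing $a_i,b_j$, observe that it is enough to produce, for $\cU$-almost every $n$, a unitary $u_n\in\cM$ making $\{a_i^{(n)}\}$ and $\{u_nb_j^{(n)}u_n^*\}$ $\varepsilon$-free up to length $L$, and then set $u:=(u_n)_\cU$, which works by continuity of $\tau$ under the ultraproduct. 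When the ambient algebras are Connes-embeddable — which covers, in particular, the interpolated free group factors relevant to Corollary~\ref{cor: fgf-ec-embeddings} — this can be carried out inside a copy of $\cR\subseteq\cM$: approximate the finitely many elements in $2$-norm by elements of a large matrix subalgebra $M_{2^k}(\bb C)\subseteq\cR$ and apply Voiculescu's theorem that a family of deterministic matrices is asymptotically free from its conjugate by a Haar-random unitary as $k\to\infty$, taking $u_n$ to be such a Haar unitary at a sufficiently large scale.

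The main obstacle is this local step in full generality, i.e.\ when $\cN_2$ (equivalently, the lifts $b_j^{(n)}$) need not be approximable by a hyperfinite subalgebra of $\cM$ and the Voiculescu averaging argument is therefore unavailable. Overcoming this is exactly the technical core of \cite{Popa-IndepinSubalgOfUP}: one replaces the single random-unitary estimate by an incremental construction of $u$ along $\cU$, at each stage exploiting that a II$_1$ factor contains arbitrarily large matrix subalgebras together with a maximality/patching argument that forces the relevant correlation quantities to zero. Since Fact~\ref{popafact} is quoted verbatim as \cite[Corollary 4.4]{Popa-IndepinSubalgOfUP}, what is described above is really an account of why it follows from Popa's local lemma plus countable saturation; for the applications in this paper it may equally be used as a black box.
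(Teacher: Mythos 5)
The statement you are proving is quoted in the paper as a black-box citation of \cite[Corollary 4.4]{Popa-IndepinSubalgOfUP}; the paper supplies no proof of it, so there is no in-paper argument to compare yours against. Your skeleton is the right one and is essentially how Popa's result is used and understood: freeness of $\cN_1$ and $u\cN_2u^*$ is equivalent to a countable family of closed conditions $|\tau(w(u,\vec a,\vec b))|=0$ once one fixes countable $\|\cdot\|_2$-dense subsets of the trace-zero parts (by multilinearity and $\|\cdot\|_2$-continuity of multiplication on bounded sets), these conditions together with unitarity form a type over a separable parameter set, and countable saturation of $\cM^\cU$ reduces everything to finite approximate satisfiability. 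The identification $\cN_1\vee u\cN_2u^*\cong \cN_1*\cN_2$ via the uniqueness of the trace on the free product and faithfulness is also correct. Deferring the local lemma to Popa is appropriate, since the paper itself uses the Fact as a black box.

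One sub-claim in your sketch is wrong, however, and worth flagging because it suggests the local step is easier than it is. Connes-embeddability of $\cM$ does \emph{not} let you approximate finitely many elements of $\cM$ in $\|\cdot\|_2$ by elements of a single copy of $\cal R$ (or a matrix subalgebra) sitting \emph{inside} $\cM$: microstates approximate joint $*$-moments by external matrices, which is much weaker than internal $2$-norm approximation by a hyperfinite subalgebra. For $L(\bb F_2)$, for instance, the canonical generators are not jointly approximable in $\|\cdot\|_2$ by elements of a common hyperfinite subalgebra. Moreover, even granting a matricial model for the $a_i$'s and $b_j$'s, the Haar unitary produced by Voiculescu's asymptotic freeness lives in the external matrix algebra, not in $\cM$, so it does not directly furnish the required $u_n\in\cM$. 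Consequently the random-matrix shortcut does not actually dispose of the ``Connes-embeddable case,'' and the incremental construction of \cite{Popa-IndepinSubalgOfUP} is needed even for the free group factors. Since that is precisely the step you explicitly outsource, your proposal stands as a correct reduction-to-Popa rather than a self-contained proof; just delete or correct the claim that the embeddable case is handled by asymptotic freeness alone.
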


\begin{cor}\label{popa} 
Suppose that $\cN$ embeds in $\cM^\cal U$.  Then there is an existential embedding $\cM \hookrightarrow_1 \cM* \cN$.  In particular, $\Th_{\forall \exists}(\cM) \leq \Th_{\forall \exists}(\cM * \cN)$.
\end{cor}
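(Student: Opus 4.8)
The plan is to reduce everything to the semantic characterization of existential embeddings recorded just after the definition of existential embedding: the canonical inclusion $\cM\hookrightarrow \cM*\cN$ is existential as soon as we exhibit a nonprincipal ultrafilter $\cU$ and an embedding $j:\cM*\cN\hookrightarrow \cM^\cU$ whose restriction to the copy of $\cM$ inside $\cM*\cN$ is the diagonal embedding. Producing such a $j$ is exactly what Fact \ref{popafact} is built for.

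Concretely, first I would fix a nonprincipal ultrafilter $\cU$ and a trace-preserving embedding $\iota:\cN\hookrightarrow \cM^\cU$ witnessing the hypothesis; we may assume $\cM$ and $\cN$ are separable, so that the diagonal copy of $\cM$ in $\cM^\cU$ and the subalgebra $\iota(\cN)$ are separable subalgebras of $\cM^\cU$. Applying Fact \ref{popafact} with $\cN_1:=\cM$ and $\cN_2:=\iota(\cN)$ yields a unitary $u\in \cM^\cU$ with $\cM$ and $u\iota(\cN)u^*$ freely independent in $\cM^\cU$ and $\cM\vee u\iota(\cN)u^*\cong \cM*\cN$. Now the universal property of the von Neumann algebraic free product applies to the diagonal embedding $\cM\hookrightarrow \cM^\cU$ and the map $x\mapsto u\iota(x)u^*$ of $\cN$ into $\cM^\cU$: their ranges are trace-preserving and freely independent, so these two maps assemble into a trace-preserving $*$-homomorphism $j:\cM*\cN\to \cM^\cU$. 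Since a trace-preserving $*$-homomorphism of tracial von Neumann algebras is automatically injective, and since the image of $j$ is $\cM\vee u\iota(\cN)u^*$, which Fact \ref{popafact} identifies with $\cM*\cN$, the map $j$ is an embedding. By construction $j$ restricts to the diagonal embedding on $\cM$, so $\cM\hookrightarrow_1 \cM*\cN$; the final sentence follows because existential embeddings only increase $\forall\exists$-theories, i.e. $\Th_{\forall\exists}(\cM)\leq \Th_{\forall\exists}(\cM*\cN)$.

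The step that needs the most care — and the only real obstacle — is checking that the "free independence $+$ generation" conclusion of Fact \ref{popafact} really delivers the canonical free product $\cM*\cN$ with its canonical trace, so that the natural copies of $\cM$ and $\cN$ inside $\cM*\cN$ are carried by $j$ precisely to $\cM$ and $u\iota(\cN)u^*$; this is a matter of unwinding the uniqueness of the free-product construction rather than genuinely new content. A secondary, purely cosmetic point is the separability hypothesis: since all algebras relevant to this paper are separable I would simply state the corollary in that setting, though one could remove it by approximating with separable subalgebras of $\cM*\cN$.
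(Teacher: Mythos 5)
Your argument is correct and is essentially the paper's proof: both reduce to the separable case, apply Fact~\ref{popafact} to the diagonal copy of $\cM$ and a copy of $\cN$ inside $\cM^\cU$, and conclude via the semantic characterization of existential embeddings. The only (immaterial) difference is that you assemble the identification $\cM*\cN\cong \cM\vee u\cN u^*$ by hand from the universal property of the tracial free product, whereas the paper simply takes this isomorphism as part of the conclusion of Fact~\ref{popafact} and composes the existential inclusion $\cM\subseteq \cM\vee u\cN u^*$ with it.
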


\begin{proof}
Without loss of generality, we may assume that $\cN$ is separable and is a subalgebra of $\cM^\cU$.  Apply Fact \ref{popafact} with $\cN_1=\cM$ (viewed as a subalgebra of $\cM^\cU$ via the diagonal embedding) and $\cN_2=\cN$.  Then the inclusion $\cM\subseteq \cM\vee u\cN u^*$ is existential; composing this inclusion with an isomorphism $\cal M\vee u\cal N u^*\cong \cM*\cN$ yields the desired existential inclusion $\cM\hookrightarrow_1 \cM*\cN$.

\end{proof}

Now suppose that $r,s\in (1,\infty)$ with $r+1<s$.  Then since $L(\bb F_{s-r})$ embeds into $\cal R^\cal U$, which in turns embeds into $L(\bb F_r)^\cal U$, we can use Corollary \ref{popa} to infer that there is an existential embedding $L(\bb F_r)\hookrightarrow_1 L(\bb F_r)*L(\bb F_{s-r})\cong L(\bb F_s)$. 

We derive another consequence of Corollary \ref{popa}:

\begin{prop}
For any embeddable II$_1$ factors $\cM$ and $\cN$ for which $2\in \cal F(\cM)\cap \cal F(\cN)$, there is an existential embedding $L(\bb F_{3/2})\hookrightarrow_1 \cM*\cN$, whence $\Th_{\forall \exists}({L(\bb F_{3/2})}) \leq \Th_{\forall \exists}(\cM*\cN)$.
\end{prop}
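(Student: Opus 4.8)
The goal is to produce an existential embedding $L(\bb F_{3/2})\hookrightarrow_1 \cM*\cN$ whenever $\cM,\cN$ are embeddable II$_1$ factors with $2\in\cal F(\cM)\cap\cal F(\cN)$. The natural strategy is to reduce to Corollary \ref{popa} after passing to a compression by $\tfrac12$, exploiting that free products interact with amplifications through Fact \ref{freeprodfact} (Dykema's formula). First I would observe that since $2\in\cal F(\cM)$ we have $\cM\cong \cM\otimes M_2(\bb C)$, and likewise $\cN\cong\cN\otimes M_2(\bb C)$. Hence by Fact \ref{freeprodfact} with $n=2$,
\[
\cM*\cN\cong(\cM\otimes M_2(\bb C))*(\cN\otimes M_2(\bb C))\cong (\cM*\cN*L(\bb F_{3}))\otimes M_2(\bb C),
\]
so that $(\cM*\cN)_{1/2}\cong \cM*\cN*L(\bb F_3)$.

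Next I would use Corollary \ref{popa} in the setting of $(\cM*\cN)_{1/2}$. Since $\cM$ and $\cN$ are embeddable, so is $\cM*\cN$ (freeness with amalgamation over $\bb C$ of embeddable algebras stays embeddable — or one can cite that $\cal R$ embeds in $\cM*\cN$ existentially and that $L(\bb F_3)$ embeds into $\cal R^\cU\subseteq (\cM*\cN)^\cU$), and therefore $\cM*\cN*L(\bb F_3)$ is embeddable too. Moreover $L(\bb F_3)$ embeds into $\cal R^\cU$, hence into $(\cM*\cN)^\cU$. Applying Corollary \ref{popa} with the ambient algebra being $\cM*\cN$ and the extra piece being $L(\bb F_3)$ gives an existential embedding $\cM*\cN\hookrightarrow_1 (\cM*\cN)*L(\bb F_3)\cong (\cM*\cN)_{1/2}$. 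Actually, more useful: I want an existential embedding of a free group factor into $(\cM*\cN)_{1/2}$. Since $\cM*\cN$ is type II$_1$ and embeddable, by Proposition \ref{freegroupexistentia} there is an existential embedding $L(\bb F_2)\hookrightarrow_1 \cM*\cN$, and combining with the existential embedding $\cM*\cN\hookrightarrow_1(\cM*\cN)*L(\bb F_3)$ from Corollary \ref{popa} — or simply applying Corollary \ref{popa} to get $L(\bb F_2)\hookrightarrow_1 L(\bb F_2)*L(\bb F_3)\cong L(\bb F_5)$ and noting $L(\bb F_5)\hookrightarrow_1 \cM*\cN*L(\bb F_3)$ via Proposition \ref{freegroupexistentia} applied to the embeddable II$_1$ pieces — yields an existential embedding $L(\bb F_3)\hookrightarrow_1 (\cM*\cN)_{1/2}$. (The precise bookkeeping of which free group factor lands inside $(\cM*\cN)_{1/2}$ needs care, but any $L(\bb F_k)$ with $k\geq 2$ embedding existentially suffices.)

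Finally I would amplify back. Using Lemma \ref{amplifyexistential}, an existential embedding $L(\bb F_3)\hookrightarrow_1 (\cM*\cN)_{1/2}$ amplifies by $t=2$ to an existential embedding $(L(\bb F_3))_2\hookrightarrow_1 ((\cM*\cN)_{1/2})_2\cong \cM*\cN$. By Fact \ref{interpfact}(3), $(L(\bb F_3))_2\cong L(\bb F_{1+\frac{3-1}{4}})=L(\bb F_{3/2})$. This gives the desired existential embedding $L(\bb F_{3/2})\hookrightarrow_1 \cM*\cN$, and $\Th_{\forall\exists}(L(\bb F_{3/2}))\leq \Th_{\forall\exists}(\cM*\cN)$ is then immediate from the remark preceding Lemma \ref{ecfree}.

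**The main obstacle.** The delicate point is making sure that the compression/amplification identities line up so that exactly $L(\bb F_{3/2})$ (and not some other index) comes out after amplifying by $2$, and verifying that $(\cM*\cN)*L(\bb F_3)$ is genuinely embeddable so that Corollary \ref{popa} applies — i.e., that $L(\bb F_3)$ sits inside $(\cM*\cN)^\cU$ and that the free product of embeddable algebras is embeddable (this last fact follows from Proposition \ref{freegroupexistentia}-type reasoning or is standard). Everything else is routine application of Lemma \ref{amplifyexistential}, Fact \ref{freeprodfact}, and Corollary \ref{popa}.
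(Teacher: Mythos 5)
Your proposal is essentially the paper's own proof: use $2\in\cal F(\cM)\cap\cal F(\cN)$ and Fact \ref{freeprodfact} to identify $\cM*\cN\cong(\cM*\cN*L(\bb F_3))\otimes M_2(\bb C)$, obtain an existential embedding $L(\bb F_3)\hookrightarrow_1 \cM*\cN*L(\bb F_3)$ (Proposition \ref{freegroupexistentia} applied to the three embeddable II$_1$ free factors $\cM$, $\cN$, $L(\bb F_3)$ -- the detours through Corollary \ref{popa} and the embeddability of $\cM*\cN*L(\bb F_3)$ are unnecessary), and amplify by $2$ via Lemma \ref{amplifyexistential} to land $L(\bb F_3)_2\cong L(\bb F_{3/2})$ existentially in $\cM*\cN$. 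One small correction: your parenthetical that ``any $L(\bb F_k)$ with $k\geq 2$ suffices'' is not right -- $k=2$ amplifies to $L(\bb F_{5/4})$, which does not yield the stated bound for $L(\bb F_{3/2})$, and Proposition \ref{freegroupexistentia} gives $L(\bb F_3)$ (not $L(\bb F_5)$) for a free product of three factors -- but your main line correctly uses $k=3$, so the argument stands.
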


\begin{proof}
By Fact \ref{freeprodfact} and the assumption that $2 \in \mathcal{F}(\cM) \cap \mathcal{F}(\cN)$, we have $\cM * \cN \cong M_2(\cM) * M_2(\cN) \cong (\cM*\cN*L(\bb F_3))\otimes M_2(\bb C)$.  Since there is an existential embedding $L(\bb F_3)\hookrightarrow_1 \cM*\cN*L(\bb F_3)$, by Lemma \ref{amplifyexistential}, there is an existential embedding $L(\bb F_{3/2})\cong L(\bb F_3)\otimes M_2(\bb C)\hookrightarrow_1 (\cM*\cN*L(\bb F_3))\otimes M_2(\bb C)\cong \cM*\cN$.
\end{proof}

The previous proposition gives an alternate proof that there is an existential embedding $L(\bb F_{3/2})\hookrightarrow_1 L(\bb F_r)$ for any $r\geq 2$.

Corollary \ref{popa} has another interesting consequence:

\begin{cor}\label{maximalAE}
There is a separable embeddable II$_1$ factor $\cM_{\forall\exists}$ such that $\Th_{\forall\exists}(\cN)\leq \Th_{\forall\exists}(\cM_{\forall\exists})$ for all embeddable II$_1$ factors $\cN$.  Consequently, $\Th_{\forall\exists}(\cM_{\forall\exists} \,*\, \cN)= \Th_{\forall\exists}(\cM_{\forall\exists})$ for all embeddable II$_1$ factors $\cN$.
\end{cor}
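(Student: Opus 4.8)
The statement asks for a single separable $\mathrm{II}_1$ factor $\cM_{\forall\exists}$ that dominates the $\forall\exists$-theory of every $\mathrm{II}_1$ factor. The natural candidate is the "locally universal" $\mathrm{II}_1$ factor: a separable factor $\cM_{\forall\exists}$ into which every separable $\mathrm{II}_1$ factor embeds. Such a factor exists by a standard construction — take a countable dense family of $\forall\exists$-sentences, or more directly, take an appropriate ultraproduct/amalgam of all separable $\mathrm{II}_1$ factors and apply downward Löwenheim–Skolem to extract a separable elementary submodel that still receives all the relevant embeddings; equivalently, one can cite the known existence of a separable $\mathrm{II}_1$ factor that is "locally universal" in the sense that every tracial von Neumann algebra embeds into one of its ultrapowers. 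I would first pin down this existence fact and state it cleanly: there is a separable $\mathrm{II}_1$ factor $\cM_{\forall\exists}$ such that for every $\mathrm{II}_1$ factor $\cN$ there is a (trace-preserving) embedding $\cN \hookrightarrow \cM_{\forall\exists}^{\cU}$ for some ultrafilter $\cU$.

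**Deriving the domination of $\forall\exists$-theories.** Once we have such a $\cM_{\forall\exists}$, the first conclusion is immediate: if $\cN \hookrightarrow \cM_{\forall\exists}^{\cU}$, then for any $\forall\exists$-sentence $\sigma = \sup_{\vec x}\inf_{\vec y}\varphi(\vec x,\vec y)$ we have $\sigma^{\cN} \le \sigma^{\cM_{\forall\exists}^{\cU}} = \sigma^{\cM_{\forall\exists}}$; the inequality holds because embeddings preserve the value of $\inf_{\vec y}\varphi(\vec a,\vec y)$ upward at least — more precisely, an embedding is automatically existential-preserving in the inequality direction $\inf_{\vec y}\varphi(i(\vec a),\vec y)^{\cN'} \le \inf_{\vec y}\varphi(\vec a,\vec y)^{\cN}$ — and then taking the outer supremum preserves $\le$; the last equality is Łoś's theorem for the ultrapower. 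Hence $\Th_{\forall\exists}(\cN) \le \Th_{\forall\exists}(\cM_{\forall\exists})$ for all $\mathrm{II}_1$ factors $\cN$.

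**The consequence about free products.** For the second sentence, fix an arbitrary $\mathrm{II}_1$ factor $\cN$. On one hand, the free product $\cM_{\forall\exists} * \cN$ is itself a $\mathrm{II}_1$ factor (a free product of $\mathrm{II}_1$ factors is a $\mathrm{II}_1$ factor), so by the first part $\Th_{\forall\exists}(\cM_{\forall\exists} * \cN) \le \Th_{\forall\exists}(\cM_{\forall\exists})$. On the other hand, since $\cN$ is a $\mathrm{II}_1$ factor it is in particular embeddable into $\cM_{\forall\exists}^{\cU}$ by local universality, so Corollary~\ref{popa} (applied with $\cM = \cM_{\forall\exists}$) gives an existential embedding $\cM_{\forall\exists} \hookrightarrow_1 \cM_{\forall\exists} * \cN$, and therefore $\Th_{\forall\exists}(\cM_{\forall\exists}) \le \Th_{\forall\exists}(\cM_{\forall\exists} * \cN)$. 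Combining the two inequalities yields $\Th_{\forall\exists}(\cM_{\forall\exists} * \cN) = \Th_{\forall\exists}(\cM_{\forall\exists})$.

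**Main obstacle.** The only real content beyond Corollary~\ref{popa} is the existence of the separable locally universal factor $\cM_{\forall\exists}$; everything else is bookkeeping. This existence is itself routine in continuous model theory — one builds a countable chain whose union's ultrapower absorbs all separable factors (every separable factor embeds into $\cR^{\cU'}$ only if it is Connes-embeddable, so one must instead use a genuinely universal target such as an ultraproduct $\prod_{\cU}\cM_n$ over a countable dense set of separable factors, then pass to a separable elementary substructure via Löwenheim–Skolem), but it deserves an explicit citation or a one-line construction. I would present it as: enumerate a countable family $(\cN_k)$ of separable $\mathrm{II}_1$ factors that is "$\forall\exists$-cofinal" (dense in $\forall\exists$-theory, which exists since the space of $\forall\exists$-theories is separable), form $\cM := (\prod_{\cU}\cN_k)$, note each $\cN_k$ embeds into $\cM$, take a separable elementary submodel $\cM_{\forall\exists} \preceq \cM$; then every $\mathrm{II}_1$ factor $\cN$ has $\Th_{\forall\exists}(\cN) \le \Th_{\forall\exists}(\cN_k)$ for the appropriate $k$ in the cofinal family, hence $\le \Th_{\forall\exists}(\cM_{\forall\exists})$, and one checks $\cM_{\forall\exists}$ is a factor. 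This is the step I would be most careful to write out correctly.
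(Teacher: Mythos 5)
Your argument collapses at the first step: embeddability of $\cN$ into an ultrapower of $\cM_{\forall\exists}$ does \emph{not} give $\Th_{\forall\exists}(\cN)\leq\Th_{\forall\exists}(\cM_{\forall\exists})$. For an embedding $i:\cN\hookrightarrow\cM^{\cU}$ and $\sigma=\sup_{\vec x}\inf_{\vec y}\varphi(\vec x,\vec y)$, the inequality you invoke, namely $\big(\inf_{\vec y}\varphi(i(\vec a),\vec y)\big)^{\cM^{\cU}}\leq\big(\inf_{\vec y}\varphi(\vec a,\vec y)\big)^{\cN}$, is correct, but after taking the supremum over $\vec a\in\cN$ its left-hand side is merely a \emph{lower} bound for $\sigma^{\cM^{\cU}}$ (the supremum there ranges only over the image of $\cN$), so you end up with a quantity that is $\leq\sigma^{\cN}$ and $\leq\sigma^{\cM^{\cU}}$ simultaneously, which compares nothing. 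To push $\forall\exists$-values up along an embedding you need the \emph{reverse} inequality for the inner infimum, i.e.\ the embedding must be existential; this is exactly why the paper only ever transfers $\forall\exists$-theories along maps $\hookrightarrow_1$. The claim is in fact false: $L(\F_2)$ embeds into $\cR^{\cU}$, yet the $\forall\exists$-sentence witnessing failure of property Gamma has value $0$ in $\cR$ and is bounded away from $0$ in $L(\F_2)$, so $\Th_{\forall\exists}(L(\F_2))\not\leq\Th_{\forall\exists}(\cR)$. For the same reason a merely locally universal separable factor need not have maximal $\forall\exists$-theory (if $\cM_0$ is locally universal, so is the McDuff factor $\cM_0\otimes\cR$, which has Gamma and hence fails to maximize that sentence), and your fallback ``dense family plus ultraproduct'' construction inherits the defect: the value of $\sigma$ in $\prod_{\cU}\cN_k$ is the $\cU$-limit of the values $\sigma^{\cN_k}$, so dominating all $\forall\exists$-theories would require that for $\cU$-many $k$ a \emph{single} $\cN_k$ nearly maximizes every finite batch of sentences at once --- and that simultaneous maximization is precisely the nontrivial point you never establish.

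That simultaneous maximization is the missing idea, and it is where the paper uses Corollary \ref{popa}. The paper's proof runs: for each $\forall\exists$-sentence $\sigma$ set $r_\sigma:=\sup\{\sigma^{\cM}:\cM\text{ a II}_1\text{ factor}\}$ and use compactness to obtain a II$_1$ factor $\cM_\sigma$ with $\sigma^{\cM_\sigma}=r_\sigma$; given finitely many $\sigma_1,\dots,\sigma_n$, Corollary \ref{popa} provides existential embeddings of the $\cM_{\sigma_i}$ into the free product $\cM_{\sigma_1}*\cdots*\cM_{\sigma_n}$, so this one factor attains all the values $r_{\sigma_i}$ at once (it cannot exceed them, by definition of $r_{\sigma_i}$); a second compactness argument, together with downward L\"owenheim--Skolem for separability, then yields $\cM_{\forall\exists}$ with $\sigma^{\cM_{\forall\exists}}=r_\sigma$ for every $\forall\exists$-sentence $\sigma$, from which the first assertion is immediate. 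Your treatment of the free-product consequence is essentially the intended one, but note that even there one must check $\cN\hookrightarrow\cM_{\forall\exists}^{\cU}$ before invoking Corollary \ref{popa}; for the paper's $\cM_{\forall\exists}$ this follows because it maximizes all universal sentences $\sup_{\vec y}(1\dotminus\psi)$, hence minimizes all existential sentences among II$_1$ factors and is therefore locally universal.
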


\begin{proof}
For each $\forall\exists$ sentence $\sigma$, let $r_\sigma:=\sup\{\sigma^{\cM} \ : \ \cM \text{ an embeddable II}_1 \text{ factor}\}$ and let $\cM_\sigma$ be an embeddable II$_1$ factor for which $\sigma^{\cM_\sigma}=r_\sigma$ (which exists by compactness). Note that all embeddable II$_1$ factors embed into each other's ultrapowers. 
Hence we may apply Corollary \ref{popa} to obtain: for any finite collection of $\forall \exists$ sentences $\sigma_1,\ldots,\sigma_n$, we have that $\sigma_i^{\cM}=r_{\sigma_i}$ for all $i=1,\ldots,n$, where $\cM=\cM_{\sigma_1}*\cdots*\cM_{\sigma_n}$.  By compactness, there is an embeddable II$_1$ factor $\cM_{\forall\exists}$ such that $\sigma^{\cM_{\forall\exists}}=r_{\sigma}$ for all $\forall\exists$ sentences $\sigma$, as desired.
\end{proof}

It would be interesting to study $\Th_{\forall\exists}(\cM_{\forall\exists})$ further.  In particular, the following question is natural:

\begin{question}
    Does $\Th_{\forall\exists}(\cM_{\forall\exists})$ coincide with $\Th_{\forall\exists}(L(\bb F_\infty))$ or with $\Th_{\forall\exists}(\prod_{\cU}M_n(\bb C))$?
\end{question}

It would also be interesting to see if there is a non-relative version of $\cal M_{\forall\exists}$, that is, a version that does not require the embeddability assumption.  The issue with adapting the proof of Corollary \ref{maximalAE} to the non-embeddable setting is that it is not clear that given two $\forall\exists$-sentences $\sigma_1$ and $\sigma_2$, there are II$_1$ factors $\cM_1$ and $\cM_2$ which embed into each other's ultrapowers and for which $\sigma_i^{\cal M_i}$ is the maximal value $r_{\sigma_i}$.

\section{Assorted further observations}

\subsection{Free group factors and matrix ultraproducts}

The following question is well-known to experts:

\begin{question}\label{freematrix}
Are there an integer $n$ and an ultrafilter $\cU$ such that $L(\bb F_n)$ has the same theory as the matrix ultraproduct $\prod_{\cal U} M_k(\bb C)$? 
\end{question}

One motivation for the above question is the fact that semi-circular variables arise as limits of random matrices and the free group factor $L(\bb F_n)$ is generated by $n$ free semi-circular elements \cite{VoiCirc1990}. This latter fact says we can approximate the value of quantifier-free sentences in $L(\bb F_n)$ using the asymptotics of random matrices, and the question asks if this approximation can be extended to arbitrary sentences.

In this section, we make a few small observations about this question.

\begin{prop}
For any nonprincipal ultrafilter $\cal U$, we have $\prod_{\cal U}(M_k(\bb C)*L(\bb Z))\cong L(\bb F_2)^{\cal U}$.
\end{prop}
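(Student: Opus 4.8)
The plan is to recognize each factor $M_k(\bb C)*L(\bb Z)$ as an interpolated free group factor via the free dimension formula, and then to invoke the continuity of ultraproducts established in Corollary \ref{cor: continuity}.

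First I would apply Fact \ref{freedimfact}: since $\operatorname{fd}(M_k(\bb C))=1-\tfrac{1}{k^2}$ and $\operatorname{fd}(L(\bb Z))=1$, we obtain, for every $k\geq 2$, an isomorphism $M_k(\bb C)*L(\bb Z)\cong L(\bb F_{r_k})$ with $r_k:=2-\tfrac{1}{k^2}\in(1,\infty)$. The case $k=1$, where the left-hand side is merely $L(\bb Z)$, is irrelevant: since $\cal U$ is nonprincipal, $\{k\geq 2\}\in\cal U$, and an ultraproduct is unchanged by discarding finitely many coordinates. Assembling the isomorphisms $M_k(\bb C)*L(\bb Z)\cong L(\bb F_{r_k})$ coordinatewise then gives $\prod_{\cal U}(M_k(\bb C)*L(\bb Z))\cong \prod_{\cal U} L(\bb F_{r_k})$.

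Next, note that $r_k=2-\tfrac{1}{k^2}\to 2$ as $k\to\infty$, with all $r_k$ and the limit $2$ lying in $(1,\infty)$. Hence Corollary \ref{cor: continuity} applies verbatim and yields $\prod_{\cal U} L(\bb F_{r_k})\cong L(\bb F_2)^{\cal U}$. Chaining this with the previous isomorphism gives $\prod_{\cal U}(M_k(\bb C)*L(\bb Z))\cong L(\bb F_2)^{\cal U}$, as desired.

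There is no real obstacle here: the argument is immediate once one has the free dimension computation and Corollary \ref{cor: continuity}, the only minor points being the degenerate $k=1$ term (handled by nonprincipality) and the passage from a family of isomorphisms to an isomorphism of ultraproducts (automatic). For a self-contained alternative one could bypass Corollary \ref{cor: continuity} and instead apply Proposition \ref{continuity} directly with $\cM=L(\bb F_2)$ and amplification parameters $t_k=k/\sqrt{k^2-1}$, chosen via Fact \ref{interpfact}(3) so that $L(\bb F_2)_{t_k}\cong L(\bb F_{2-1/k^2})$ and $\lim_{\cal U} t_k=1$; this is the same computation unwound.
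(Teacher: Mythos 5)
Your proof is correct and follows essentially the same route as the paper: identify $M_k(\bb C)*L(\bb Z)\cong L(\bb F_{2-1/k^2})$ via the free dimension formula and then invoke the continuity result (the paper cites Proposition \ref{continuity} directly, which is exactly the ``unwound'' alternative you mention, and Corollary \ref{cor: continuity} is just its packaged form). Your extra remarks on the $k=1$ coordinate and on assembling coordinatewise isomorphisms are fine but not needed beyond what the paper records.
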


\begin{proof}
By Facts \ref{interpfact} and \ref{freedimfact}, we have that $M_k(\bb C)* L(\bb Z)\cong L(\bb F_{2-\frac{1}{k^2}}).$  
Now apply Proposition \ref{continuity}.
\end{proof}

\begin{lem}
For any nonprincipal ultrafilter $\cal U$, we have $$Th_{\forall\exists} \left(\prod_{\cal U}M_k(\bb C) \right) \leq Th_{\forall\exists}\left( \big(\prod_{\cal U}M_k(\bb C) \big) *L(\bb Z) \right).$$
\end{lem}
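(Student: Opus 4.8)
The plan is to deduce this directly from Corollary~\ref{popa}. Set $\cM := \prod_{\cal U} M_k(\bb C)$. Since $\cal U$ is a nonprincipal ultrafilter on $\bb N$ we have $\lim_{\cal U} k = \infty$, so $\cM$ is a II$_1$ factor; this is the only structural fact about $\cM$ that I will need. Every II$_1$ factor contains a copy of the hyperfinite II$_1$ factor $\cal R$, and $\cal R$ in turn contains a copy of $L(\bb Z)$ (e.g.\ generated by a Haar unitary). Hence $L(\bb Z)$ embeds trace-preservingly into $\cM$, and composing with the diagonal embedding $\cM \hookrightarrow \cM^{\cal V}$ shows that $L(\bb Z)$ embeds into $\cM^{\cal V}$ for any ultrafilter $\cal V$, which is precisely the hypothesis of Corollary~\ref{popa}.

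Next I would invoke Corollary~\ref{popa} with this $\cM$ and with $\cN := L(\bb Z)$: it produces an existential embedding $\cM \hookrightarrow_1 \cM * L(\bb Z)$. As noted just before Lemma~\ref{ecfree}, whenever $\cM \hookrightarrow_1 \cal P$ one has $\Th_{\forall\exists}(\cM) \leq \Th_{\forall\exists}(\cal P)$ (as functions on the set of $\forall\exists$-sentences). Applying this to the embedding just obtained gives
\[
    \Th_{\forall\exists}\!\left(\prod_{\cal U} M_k(\bb C)\right) \ \leq\ \Th_{\forall\exists}\!\left(\Big(\prod_{\cal U} M_k(\bb C)\Big) * L(\bb Z)\right),
\]
which is exactly the assertion of the lemma.

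I do not expect any genuine obstacle here: the statement is a formal consequence of the machinery already in place, and the only point worth spelling out is that $\prod_{\cal U} M_k(\bb C)$ is a bona fide II$_1$ factor, so that $L(\bb Z)$ sits inside it and Popa's theorem underlying Corollary~\ref{popa} applies. I would also remark in passing that the same argument, with $L(\bb Z)$ replaced by any separable tracial von Neumann algebra $\cN$ that embeds into $\cM^{\cal V}$, gives $\Th_{\forall\exists}(\cM) \leq \Th_{\forall\exists}(\cM * \cN)$; in particular this covers every embeddable $\cN$, since $\cal R \hookrightarrow \cM$ forces $\cal R^{\cal V} \hookrightarrow \cM^{\cal V}$.
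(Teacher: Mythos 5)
Your proposal has a genuine gap: you apply Corollary \ref{popa} directly with $\cM = \prod_{\cal U} M_k(\bb C)$, but this algebra is \emph{not separable}, and the proof of Corollary \ref{popa} rests on Fact \ref{popafact} (Popa's theorem), which is stated only for \emph{separable} subalgebras $\cN_1,\cN_2$ of $\cM^{\cal U}$. In the proof of Corollary \ref{popa} one takes $\cN_1=\cM$ itself (the diagonal copy inside $\cM^{\cal U}$), so the corollary implicitly requires $\cM$ to be separable; for a countably incomplete ultrafilter the ultrapower is only countably saturated, so the free-independence argument cannot be run against a nonseparable diagonal copy. This is exactly why the lemma is not the one-line consequence of Corollary \ref{popa} that you describe: the hypothesis you say is ``the only point worth spelling out'' (that $L(\bb Z)$ embeds into $\cM^{\cal V}$) is not the binding constraint.

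The paper's proof repairs this by first fixing a \emph{separable elementary substructure} $\cM \preceq \prod_{\cal U} M_k(\bb C)$ (which exists by downward L\"owenheim--Skolem). Corollary \ref{popa} legitimately applies to this separable $\cM$, giving $\cM \hookrightarrow_1 \cM * L(\bb Z)$. Then, since the elementary inclusion $\cM \hookrightarrow \prod_{\cal U} M_k(\bb C)$ is in particular existential, Lemma \ref{ecfree} yields an existential embedding $\cM * L(\bb Z) \hookrightarrow_1 \bigl(\prod_{\cal U} M_k(\bb C)\bigr) * L(\bb Z)$; composing gives $\cM \hookrightarrow_1 \bigl(\prod_{\cal U} M_k(\bb C)\bigr) * L(\bb Z)$, and since $\Th_{\forall\exists}(\cM) = \Th_{\forall\exists}\bigl(\prod_{\cal U} M_k(\bb C)\bigr)$, the desired inequality follows. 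Your argument can be salvaged by inserting this reduction to a separable elementary substructure, but as written the key step is unjustified.
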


\begin{proof}
Fix a separable elementary substructure $\cM$ of $\prod_{\cal U}M_k(\bb C)$.  Then Proposition \ref{popa} implies that there is an existential embedding $\cM \hookrightarrow_1 \cM*L(\bb Z)$.  Lemma \ref{ecfree} then implies that there is an existential embedding $\cM *L(\bb Z)\hookrightarrow_1 (\prod_{\cal U}M_k(\bb C))*L(\bb Z)$, whence there is an existential embedding $\cM \hookrightarrow_1 (\prod_{\cal U}M_k(\bb C))*L(\bb Z)$.  This finishes the proof.
\end{proof}

% An application of CH at the end of the previous proof would in fact yield an existential embedding $\prod_{\cal U}M_k(\bb C)\hookrightarrow (\prod_{\cal U}M_k(\bb C))*L(\bb Z)$.  

In connection with Question \ref{freematrix}, we ask if we can ``bring $L(\bb Z)$ into the ultraproduct''?

\begin{question}\label{AEmatrixultra}
    Is it true that, for any nonprincipal ultrafilter $\cal U$, we have $$Th_{\forall\exists} \left( \prod_{\cal U}M_k(\bb C) \right) \leq Th_{\forall\exists} \left( \prod_{\cal U} \big( M_k(\bb C)*L(\bb Z) \big) \right) =Th_{\forall\exists}(L(\bb F_2))?$$  In fact, is there an existential embedding $\prod_{\cal U}M_k(\bb C)\hookrightarrow_1 L(\bb F_2)^{\cal U}$?
\end{question}

    For $r\geq 2$, Corollary \ref{cor: fgf-ec-embeddings} gives an existential embedding $L(\F_2)^\cU \hookrightarrow_1 L(\F_r)^\cU$; thus, a positive answer to the second part of Question \ref{AEmatrixultra} yields an existential embedding $\prod_{\cal U}M_k(\bb C)\hookrightarrow_1 L(\bb F_r)^{\cal U}$.  We now show that the same conclusion can be reached for $r\in (1,2)$.

% \textcolor{blue}{It would be great to prove that $\sigma^{L(\bb F_2)}\leq \liminf \sigma^{M_k(\bb C)}$ for any $\forall\exists$-sentence $\sigma$, so that $\prod_{\cal U}M_k(\bb C)$ and $L(\bb F_2)$ have the same $\forall\exists$-theory (which would explain why we are having so much trouble distinguishing them).}

\begin{prop}
Suppose the second part of Question \ref{AEmatrixultra} has a positive answer.  Then for any nonprincipal ultrafilter $\cal U$ and any $r\in (1,2)$, we have an existential embedding $\prod_{\cal U}M_k(\bb C)\hookrightarrow_1 L(\bb F_r)^{\cal U}$.  Consequently, $$\limsup_{k\to \infty} \Th_{\forall \exists}({M_k(\bb C)}) \leq \inf_{r\in (1,\infty)}\Th_{\forall \exists}({L(\bb F_r)}).$$
\end{prop}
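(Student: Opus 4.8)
The plan is to transfer the hypothesized existential embedding $\prod_\cU M_k(\bb C)\hookrightarrow_1 L(\bb F_2)^\cU$ down to $L(\bb F_r)$ for $r\in(1,2)$ by amplifying it via Lemma \ref{amplifyexistential}. Fix a nonprincipal ultrafilter $\cU$ and $r\in(1,2)$, and let $i\colon\prod_\cU M_k(\bb C)\hookrightarrow_1 L(\bb F_2)^\cU$ be the existential embedding supplied by the positive answer to Question \ref{AEmatrixultra}. Put $t:=(r-1)^{-1/2}$, which lies in $(1,\infty)$ since $r\in(1,2)$. By Lemma \ref{amplifyexistential} the amplification $i_t\colon(\prod_\cU M_k(\bb C))_t\hookrightarrow_1(L(\bb F_2)^\cU)_t$ is again existential, and by Lemma \ref{lemma: compr and ultrapower commute} together with Fact \ref{interpfact}(3),
\[(L(\bb F_2)^\cU)_t\cong(L(\bb F_2)_t)^\cU\cong L(\bb F_{1+t^{-2}})^\cU=L(\bb F_r)^\cU.\]
So it suffices to identify $(\prod_\cU M_k(\bb C))_t$ with $\prod_\cU M_k(\bb C)$ and precompose $i_t$ with that identification.

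For the identification I would argue that $\prod_\cU M_k(\bb C)$ has full fundamental group. Choosing projections $p_k\in M_k(\bb C)$ of rank $\lfloor tk\rfloor$ (after first passing to matrix amplifications, as $t>1$, exactly as in the proof of Proposition \ref{continuity}) and setting $p:=(p_k)_\cU$, a direct computation with the trace gives $p(\prod_\cU M_k(\bb C))p\cong\prod_\cU M_{d_k}(\bb C)$ for a sequence $d_k\to\infty$ with $d_k/k\to t$, hence $(\prod_\cU M_k(\bb C))_t\cong\prod_\cU M_{d_k}(\bb C)$. One then invokes the fact that any two matrix ultraproducts $\prod_\cU M_{n_k}(\bb C)$ and $\prod_\cU M_{m_k}(\bb C)$ over the same nonprincipal ultrafilter on $\bb N$ with $n_k,m_k\to\infty$ are isomorphic: they are elementarily equivalent because $\Th(M_n(\bb C))$ converges as $n\to\infty$, and the isomorphism then follows from the Keisler--Shelah argument used throughout the paper. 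Composing $i_t$ with the resulting isomorphism $\prod_\cU M_k(\bb C)\cong(\prod_\cU M_k(\bb C))_t$ produces the desired existential embedding $\prod_\cU M_k(\bb C)\hookrightarrow_1 L(\bb F_r)^\cU$. A variant sidestepping compressions by irrational scalars: pick an integer $m\ge(r-1)^{-1/2}$, so $R:=1+m^2(r-1)\ge 2$ and $L(\bb F_r)\cong L(\bb F_R)\otimes M_m(\bb C)$ by Fact \ref{interpfact}(3); compose $i$ with the existential embedding $L(\bb F_2)^\cU\hookrightarrow_1 L(\bb F_R)^\cU$ of Corollary \ref{generalexistential} (ultrapowered), amplify by $m$ via Lemma \ref{amplifyexistential} to land in $(L(\bb F_R)^\cU)_m\cong L(\bb F_r)^\cU$, and use $(\prod_\cU M_k(\bb C))\otimes M_m(\bb C)=\prod_\cU M_{mk}(\bb C)\cong\prod_\cU M_k(\bb C)$.

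For the ``consequently'': for every nonprincipal $\cU$ and every $r\in(1,\infty)$ there is an existential embedding $\prod_\cU M_k(\bb C)\hookrightarrow_1 L(\bb F_r)^\cU$ --- for $r<2$ by the above, and for $r\ge 2$ by composing the hypothesized embedding with the existential embedding $L(\bb F_2)^\cU\hookrightarrow_1 L(\bb F_r)^\cU$ of Corollary \ref{cor: fgf-ec-embeddings}, as already noted before the Proposition. Hence for every $\forall\exists$-sentence $\sigma$ we have $\lim_\cU\sigma^{M_k(\bb C)}=\sigma^{\prod_\cU M_k(\bb C)}\le\sigma^{L(\bb F_r)^\cU}=\sigma^{L(\bb F_r)}$, and taking the supremum over all nonprincipal ultrafilters $\cU$ on $\bb N$ yields $\limsup_{k\to\infty}\sigma^{M_k(\bb C)}\le\sigma^{L(\bb F_r)}$. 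As $r\in(1,\infty)$ was arbitrary, $\limsup_k\Th_{\forall\exists}(M_k(\bb C))\le\inf_{r\in(1,\infty)}\Th_{\forall\exists}(L(\bb F_r))$.

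The one genuinely delicate point is the identification $(\prod_\cU M_k(\bb C))_t\cong\prod_\cU M_k(\bb C)$, i.e.\ that the matrix ultraproduct has full fundamental group, equivalently that all matrix ultraproducts over a fixed nonprincipal ultrafilter on $\bb N$ with block sizes tending to infinity are isomorphic; this in turn rests on the convergence of $\Th(M_n(\bb C))$ as $n\to\infty$. Everything else is routine bookkeeping with amplifications and the ultraproduct computation of values of sentences. I note that for the displayed $\limsup$ inequality alone --- as opposed to the per-$\cU$ embedding statement --- it is enough to know that $\Th_{\forall\exists}(M_n(\bb C))$ converges as $n\to\infty$, since then $\limsup_k\sigma^{M_{mk}(\bb C)}=\limsup_k\sigma^{M_k(\bb C)}$ and the variant above gives $\prod_\cU M_{mk}(\bb C)\hookrightarrow_1 L(\bb F_r)^\cU$ directly.
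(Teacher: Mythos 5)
Your overall strategy (amplify the hypothesized embedding to pass from $L(\bb F_2)^{\cal U}$ to $L(\bb F_r)^{\cal U}$) is the natural one, and the bookkeeping with Lemma \ref{amplifyexistential}, Lemma \ref{lemma: compr and ultrapower commute} and Fact \ref{interpfact}(3), as well as the deduction of the $\limsup$ inequality from the per-ultrafilter embeddings, is fine. The gap is exactly the point you yourself flag as ``delicate'': both your main argument and your integer variant end with a matrix ultraproduct whose block sizes have been multiplied by a factor bounded away from $1$ (sizes $d_k\sim tk$, resp.\ $mk$), and you identify this with $\prod_{\cal U}M_k(\bb C)$ by asserting that $\Th(M_n(\bb C))$ converges as $n\to\infty$ and then invoking Keisler--Shelah. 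Neither half of that is available: convergence of the theories (even of the $\forall\exists$-theories) of the $M_n(\bb C)$, equivalently elementary equivalence of matrix ultraproducts along different size sequences, is not proved anywhere in the paper and is, as far as is known, open --- this is precisely why the proposition is stated with a $\limsup$ rather than a limit; and even granting elementary equivalence, Keisler--Shelah only gives isomorphism of further ultrapowers (or of the ultraproducts themselves under CH via saturation), so $\prod_{\cal U}M_{mk}(\bb C)\cong\prod_{\cal U}M_k(\bb C)$ is a further unproved assertion. Since $r\in(1,2)$ is the entire content of the proposition, the proof as written rests on an open problem.

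The paper avoids this by not amplifying the given embedding upward. It uses that the hypothesis holds for \emph{every} nonprincipal ultrafilter, hence (pushing forward along an injection of index sets) for the matrix ultraproduct along any sequence of sizes tending to infinity: choose $m$ with $1+\frac{1}{m^2}\leq r$ and $j\in\{0,\dots,m-1\}$ with $m\mid k-j$ for $\cal U$-almost all $k$, apply the hypothesis to get an existential embedding $\prod_{\cal U}M_{\frac{k-j}{m}}(\bb C)\hookrightarrow_1 L(\bb F_2)^{\cal U}$, then tensor with $M_m(\bb C)$ so the source becomes $\prod_{\cal U}M_{k-j}(\bb C)$ and the target $(L(\bb F_2)_m)^{\cal U}\hookrightarrow_1 L(\bb F_r)^{\cal U}$. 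Because the size defect $j$ is bounded, projections $p_k\in M_k(\bb C)$ of trace $\frac{k-j}{k}\to1$ satisfy $(p_k)_{\cal U}=1$, so $\prod_{\cal U}M_{k-j}(\bb C)\cong\prod_{\cal U}M_k(\bb C)$ trivially --- no convergence of theories and no comparison of genuinely different matrix ultraproducts is needed. In short: divide the sizes first and multiply afterwards, keeping the block-size discrepancy bounded; that re-indexing is the missing idea, and without it (or without settling the open convergence question you invoke) your route of multiplying the sizes cannot be completed.
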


\begin{proof}
Fix $r\in (1,2)$ and take $m\in \bb N$ sufficiently large such that $1+\frac{1}{m^2}\leq r$.  Take $j\in \{0,1,\ldots,m-1\}$ such that $k-j$ is divisible by $m$ for $\cal U$-almost all $k\in \bb N$.  For this $j$, there is an existential embedding $\prod_{\cal U}M_{\frac{k-j}{m}}(\bb C)\hookrightarrow_1 L(\bb F_2)^{\cal U}$.  (This is an abuse of notation:  $\frac{k-j}{m}$ is an integer for $\cal U$-many $k$, and how we define the embedding on the other factors is irrelevant.) By tensoring with $M_m(\bb C)$, we then have existential embeddings $\prod_{\cal U}M_{k-j}(\bb C)\hookrightarrow_1 L(\bb F_2)_m^{\cal U}\hookrightarrow_1 L(\bb F_r)^{\cal U}$.  It remains to note that $\prod_{\cal U}M_{k-j}(\bb C)\cong \prod_{\cal U}M_k(\bb C)$.  Indeed, for each $k\in \bb N$, take a projection $p_k\in \cal P(M_{k}(\bb C))$ of trace $\frac{k-j}{k}$.  Note then that $\lim_{\cal U} tr(p_k)=1$, so $(p_k)_{\cal U}=1$ and $\prod_{\cal U}M_k(\bb C)=\prod_{\cal U}p_kM_k(\bb C)p_k\cong \prod_{\cal U}M_{k-j}(\bb C)$. 
\end{proof}

\subsection{The question for reduced group C*-algebras}

The following result of Pimsner and Voiculescu \cite{Pimsner-Voi1982} settled the reduced group C*-algebra version of the free group factor problem:

\begin{fact}
For any $m\geq 2$, $K_1(C^*_r(\bb F_m))\cong \bb Z^m$.  Consequently, for distinct $m,n\geq 2$, we have $C^*_r(\bb F_m)\not\cong C^*_r(\bb F_n)$.
\end{fact}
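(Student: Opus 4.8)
The plan is to deduce both statements from a computation of $K_1(C^*_r(\bb F_m))$. For the $K_1$ computation I would invoke the Pimsner--Voiculescu six-term exact sequence for reduced crossed products by free groups \cite{Pimsner-Voi1982}: if $\bb F_m$ acts on a unital $C^*$-algebra $A$ by an action $\alpha$, with $s_1,\dots,s_m$ the free generators, then for $j\in\{0,1\}$ there is an exact sequence
\[
\bigoplus_{i=1}^m K_j(A)\xrightarrow{\beta}K_j(A)\xrightarrow{\iota_*}K_j\!\left(A\rtimes_{r,\alpha}\bb F_m\right)\xrightarrow{\partial}\bigoplus_{i=1}^m K_{j-1}(A),
\]
where $\beta(x_1,\dots,x_m)=\sum_{i=1}^m\bigl(x_i-(\alpha_{s_i})_*x_i\bigr)$. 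Applying this with $A=\bb C$ and $\alpha$ the trivial action gives $\bb C\rtimes_r\bb F_m=C^*_r(\bb F_m)$, and since each $(\alpha_{s_i})_*$ is the identity both occurrences of $\beta$ vanish; moreover $K_0(\bb C)=\bb Z$ and $K_1(\bb C)=0$.

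Feeding these in, the six-term sequence collapses to the short exact sequence
\[
0\longrightarrow K_1\!\left(C^*_r(\bb F_m)\right)\xrightarrow{\partial}\bb Z^m\xrightarrow{0}\bb Z\xrightarrow{\iota_*}K_0\!\left(C^*_r(\bb F_m)\right)\longrightarrow 0,
\]
so exactness forces $\partial\colon K_1(C^*_r(\bb F_m))\xrightarrow{\cong}\bb Z^m$ and, as a consistency check, $\iota_*\colon\bb Z\xrightarrow{\cong}K_0(C^*_r(\bb F_m))$ (so $K_0$ is generated by $[1]$). A second route avoiding crossed-product machinery is to write $\bb F_m=\bb F_{m-1}*\bb Z$, hence $C^*_r(\bb F_m)\cong C^*_r(\bb F_{m-1})\ast_r C(\bb T)$, and to induct on $m$ using the Mayer--Vietoris-type six-term exact sequence for reduced free products (with respect to the canonical traces), together with $K_*(C(\bb T))=(\bb Z,\bb Z)$ and the base case $C^*_r(\bb F_1)=C(\bb T)$; at each stage the relevant connecting map is injective, so no torsion or cancellation arises and $K_1$ acquires exactly one additional $\bb Z$ summand.

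For the consequence I would use that $K_1$ is a $\ast$-isomorphism invariant: a hypothetical isomorphism $C^*_r(\bb F_m)\cong C^*_r(\bb F_n)$ would yield $\bb Z^m\cong K_1(C^*_r(\bb F_m))\cong K_1(C^*_r(\bb F_n))\cong\bb Z^n$ as abelian groups, hence $m=n$ by comparison of ranks (for instance after applying $-\otimes_{\bb Z}\bb Q$). The only genuinely difficult ingredient is the exact sequence quoted in the first paragraph, whose construction rests on the Toeplitz-type extension associated to the action of $\bb F_m$ on its Bass--Serre tree together with a $KK$-equivalence argument; granting \cite{Pimsner-Voi1982}, what remains is the bookkeeping of exactness and the identification of the $K$-groups of $\bb C$ and $C(\bb T)$, all of which is routine.
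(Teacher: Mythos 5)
Your proposal is correct and takes essentially the same route as the paper, which simply quotes this computation from Pimsner--Voiculescu \cite{Pimsner-Voi1982}: applying their six-term exact sequence to the trivial action of $\bb F_m$ on $\bb C$ gives $K_0(C^*_r(\bb F_m))\cong\bb Z$ and $K_1(C^*_r(\bb F_m))\cong\bb Z^m$, and the non-isomorphism for distinct $m,n$ follows since $K_1$ is a $*$-isomorphism invariant and $\bb Z^m\not\cong\bb Z^n$. Note only that the paper's notation $\bb Z_m$ stands for the free abelian group $\bb Z^m$ (not $\bb Z/m\bb Z$), so your computation agrees with the stated fact.
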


It is of course natural to ask the following:

\begin{question}\label{C*question}
For distinct $m,n\geq 2$, do we have $C^*_r(\bb F_m)\equiv C^*_r(\bb F_n)$ (in the language of unital C*-algebras)?
\end{question}

As far as these authors can tell, this problem is currently open.  In this section, we make two observations about this problem.  The first observation is that a positive answer to the previous problem implies a positive answer to the first-order free group factor alternative:

\begin{lem}
If $C^*_r(\bb F_m)\equiv C^*_r(\bb F_n)$, then $L(\bb F_m)\equiv L(\bb F_n)$.
\end{lem}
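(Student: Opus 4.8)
The plan is to show that the first-order theory of $C^*_r(\bb F_m)$, as a unital C*-algebra, controls the first-order theory of $L(\bb F_m)$ as a tracial von Neumann algebra, by exhibiting $L(\bb F_m)$ as something built functorially (in a model-theoretically tame way) out of $C^*_r(\bb F_m)$ together with its canonical trace. The key observation is that the canonical tracial state $\tau_m$ on $C^*_r(\bb F_m)$ is definable from the C*-algebra structure alone: it is the \emph{unique} tracial state on $C^*_r(\bb F_m)$ (the free group is ICC, hence $C^*_r(\bb F_m)$ is simple with unique trace by Powers/Choi), and uniqueness of a trace is expressible so that $\tau_m$ becomes an $\emptyset$-definable predicate in the language of unital C*-algebras. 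Consequently $C^*_r(\bb F_m)\equiv C^*_r(\bb F_n)$ as unital C*-algebras implies $(C^*_r(\bb F_m),\tau_m)\equiv (C^*_r(\bb F_n),\tau_n)$ as \emph{tracial} C*-algebras.

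Next I would pass from the tracial C*-algebra to the von Neumann algebra. The GNS construction applied to $(C^*_r(\bb F_m),\tau_m)$ produces $L(\bb F_m)=\pi_{\tau_m}(C^*_r(\bb F_m))''$, and the point is that this completion is the \emph{definable closure} inside the tracial-C*-algebra setting, or — more concretely and avoiding definability subtleties — that $L(\bb F_m)$ is elementarily equivalent to an ultrapower-style object assembled from $(C^*_r(\bb F_m),\tau_m)$. The cleanest route: by Keisler--Shelah for unital C*-algebras with a distinguished trace, $(C^*_r(\bb F_m),\tau_m)\equiv(C^*_r(\bb F_n),\tau_n)$ yields ultrapowers $(C^*_r(\bb F_m),\tau_m)^{\cU}\cong (C^*_r(\bb F_n),\tau_n)^{\cV}$ as tracial C*-algebras. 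Taking the von Neumann algebra generated by the GNS representation of the (limit) trace on each side — which for a C*-ultrapower with its limit trace is exactly the tracial von Neumann algebra ultrapower of the pieces, i.e. $L(\bb F_m)^{\cU}$ and $L(\bb F_n)^{\cV}$ — gives an isomorphism $L(\bb F_m)^{\cU}\cong L(\bb F_n)^{\cV}$, and then another application of Keisler--Shelah (now for tracial von Neumann algebras, \cite{FHS3-EE-and-II1-Factors}) yields $L(\bb F_m)\equiv L(\bb F_n)$.

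The steps, in order: (1) recall $C^*_r(\bb F_m)$ is simple with unique tracial state; (2) note that "there is a unique tracial state'' can be used to interpret that trace as a definable predicate, so that an elementary equivalence of the bare C*-algebras upgrades to an elementary equivalence of the pairs $(C^*_r(\bb F_m),\tau_m)$; (3) invoke Keisler--Shelah to get an isomorphism of ultrapowers of these tracial C*-algebras; (4) identify the von Neumann algebra generated by the GNS representation of the limit trace on a tracial-C*-algebra ultrapower with the tracial-von-Neumann-algebra ultrapower of the GNS von Neumann algebras of the factors, i.e. $L(\bb F_m)^\cU$; (5) conclude $L(\bb F_m)^\cU\cong L(\bb F_n)^\cV$ and apply Keisler--Shelah again to get $L(\bb F_m)\equiv L(\bb F_n)$.

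The main obstacle is step (4): one must be careful that taking the ultrapower of a C*-algebra with its unique trace and then applying GNS and double commutant genuinely reproduces the tracial von Neumann algebra ultrapower $L(\bb F_m)^{\cU}$, rather than some smaller or incomparable algebra. This requires knowing that the limit trace on the C*-ultrapower, restricted to (the image of) $C^*_r(\bb F_m)^\cU$, has GNS von Neumann completion equal to $(L(\bb F_m))^\cU$ — which follows from the fact that the $\|\cdot\|_2$-completion of $C^*_r(\bb F_m)$ under $\tau_m$ is $L(\bb F_m)$ together with the commutation of GNS-completion with ultraproducts in the tracial setting. A secondary subtlety is making step (2) precise: strictly one should check that uniqueness of the trace makes the evaluation map $a\mapsto\tau_m(a)$ a genuine definable predicate (uniform in models of the common theory), which is a standard but not entirely trivial definability-by-unique-solution argument in continuous logic. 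Both points are routine for experts, so I expect the proof to be short, citing the model theory of C*-algebras and of tracial von Neumann algebras for the Keisler--Shelah inputs.
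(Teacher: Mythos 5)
Your overall architecture --- make the canonical trace definable from the C*-structure, then pass to the generated von Neumann algebra and transfer elementary equivalence --- is the same as the paper's, and your steps (3)--(5) (Keisler--Shelah for the tracial C*-algebras, then identifying the GNS/weak closure of the limit trace on the C*-ultrapower with the tracial von Neumann ultrapower $L(\bb F_m)^{\cU}$ via the trace-kernel ideal and Kaplansky density) are a correct semantic unwinding of what the paper handles in one stroke by citing interpretability of the generated von Neumann algebra in a C*-algebra with definable trace (\cite[Proposition 3.5.1]{MTofCstar}).

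The genuine gap is your step (2). You claim that because $C^*_r(\bb F_m)$ is simple with a unique tracial state, the trace $\tau_m$ is automatically an $\emptyset$-definable predicate by a ``definability-by-unique-solution'' argument. No such general principle is available: the trace is a functional on the structure, not an element of it, so uniqueness arguments for definable elements do not apply, and mere uniqueness of the tracial state gives no sequence of formulas converging uniformly to $a\mapsto\tau_m(a)$ with a theory-controlled rate --- which is exactly what is needed for the elementary equivalence $C^*_r(\bb F_m)\equiv C^*_r(\bb F_n)$ to upgrade to equivalence of the pairs $(C^*_r(\bb F_m),\tau_m)\equiv(C^*_r(\bb F_n),\tau_n)$ (and, downstream, for the isomorphism of C*-ultrapowers to carry the limit traces to each other). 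What actually makes the trace definable is a quantitative averaging property: the paper invokes the fact that $\bb F_m$ is a Powers group (de la Harpe), hence $C^*_r(\bb F_m)$ has the \emph{uniform strong Dixmier property}, which produces explicit formulas (averages over a number of unitary conjugates depending only on the error) defining $\tau_m$, with the same definition working for all $m$; this uniformity is what licenses the transfer. If you want to salvage your step (2), you must replace the uniqueness argument by this (or an equivalent uniform Dixmier-type) input; Haagerup--Zsid\'o-style unique-trace-implies-Dixmier results alone do not give the uniformity that definability requires.
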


\begin{proof}
As discussed in the proof of \cite[Proposition 7.2.4]{MTofCstar}, if $\Gamma$ is a Powers group, it has the uniform strong Dixmier property and thus the unique trace on $C^*_r(\Gamma)$ is definable in the language of unital C*-algebras, with the same definition working for all Powers groups.  By the results of \cite[page 234]{delaHarpe85}, for each $m\geq 2$, $\bb F_m$ is a Powers group, whence the trace on $C^*_r(\bb F_m)$ is definable, uniformly in $m$.  Since $L(\bb F_m)$ is the von Neumann algebra generated by $C^*_r(\bb F_m)$, by \cite[Proposition 3.5.1]{MTofCstar}, we have that $L(\bb F_m)$ is interpretable in $C^*_r(\bb F_m)$, uniformly in $m$ (since the definition of the trace is uniform over all $m$).  The conclusion of the lemma follows.
\end{proof}

We end this section by showing that a positive answer to a well-known problem in C*-algebra theory would lead to a negative answer to Question \ref{C*question}.  Recall that, for a unital C*-algebra $A$, $U_0(A)$ denotes the connected component of the identity in the unitary group $U(A)$. 

\begin{lem}
If $U_0(C^*_r(\bb F_m))$ is a definable subset of $C^*_r(\bb F_m)$, uniformly in $m$, then Question \ref{C*question} has a negative answer.
\end{lem}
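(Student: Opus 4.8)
The plan is to use the $K_1$-invariant together with the definability hypothesis to produce a first-order sentence that distinguishes $C^*_r(\bb F_m)$ from $C^*_r(\bb F_n)$ when $m\neq n$. Recall from the Pimsner--Voiculescu computation that $K_1(C^*_r(\bb F_m))\cong \bb Z^m$ (generated by the canonical unitaries $\lambda(a_1),\dots,\lambda(a_m)$), so the group $U(C^*_r(\bb F_m))/U_0(C^*_r(\bb F_m))$, which surjects onto $K_1$, is nontrivial and in fact infinite with $m$ independent generators. The key point is that if $U_0(C^*_r(\bb F_m))$ is a definable subset (uniformly in $m$), then the relation ``$u v^{-1}\in U_0$'' on unitaries $u,v$ — i.e. ``$u$ and $v$ are in the same connected component'' — is a definable relation in the language of unital C*-algebras, again uniformly in $m$.

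First I would make precise what needs to be expressed. The idea is to write, for each $\ell\geq 1$, a sentence $\sigma_\ell$ asserting roughly ``there exist unitaries $u_1,\dots,u_\ell$ such that no nontrivial word in the $u_i$ lies in $U_0$'' — that is, the $u_i$ generate a free abelian subgroup of rank $\ell$ inside $U/U_0$. Since $K_1(C^*_r(\bb F_m))\cong \bb Z^m$ and the map $U(A)\to K_1(A)$ is surjective with kernel exactly $U_0(A)\cdot(\text{commutators})$ — and for these C*-algebras, being stably finite of stable rank... here one must be a little careful: $U/U_0$ need not equal $K_1$ in general, but it always surjects onto $K_1$. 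So the existence of $\ell$ unitaries whose images in $U/U_0$ are $\bb Z$-independent is witnessed in $C^*_r(\bb F_m)$ precisely when $\ell\leq m$ (using that $K_1\cong\bb Z^m$ for the lower bound via $\lambda(a_1),\dots,\lambda(a_m)$, and that $K_1$ has rank $m$ for the upper bound, since independence in $U/U_0$ forces independence in $K_1$). The condition ``$w(u_1,\dots,u_\ell)\notin U_0$'' for a fixed word $w$ is, using the definability of $U_0$, expressible by a formula; and ``$\bb Z$-independent'' means this holds for every nonzero exponent vector, which is a countable conjunction of such conditions. A single first-order sentence cannot directly express an infinite conjunction, so instead I would use a compactness/type argument: the set of conditions ``there exist $u_1,\dots,u_\ell$ with $w(\vec u)\notin U_0$ for all nonzero $w$'' is a consistent set of sentences realized in $C^*_r(\bb F_m)$ iff $\ell\le m$. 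Since elementary equivalence preserves satisfiability of each individual sentence, $C^*_r(\bb F_m)\equiv C^*_r(\bb F_n)$ would force the same supremum of such $\ell$, i.e. $m=n$, a contradiction.

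Concretely, the cleanest route avoids infinitary issues by working one sentence at a time: for fixed $\ell$ and fixed finite word $w$, let $\tau_{\ell,w}$ be the sentence $\inf_{u_1,\dots,u_\ell \text{ unitary}}\ \mathrm{dist}(w(\vec u),U_0)$ — using the definable predicate for distance to $U_0$. Then $\tau_{\ell,w}^{C^*_r(\bb F_m)}=0$ for all $w$ iff every tuple of $\ell$ unitaries has some word in $U_0$... that is not quite an ``exists'' statement. Rather, I would consider $\rho_{\ell}:=\sup_{\vec u}\inf_{w\in W}\mathrm{dist}(w(\vec u),U_0)$ over an appropriate finite set $W$ of words, and verify (this is the combinatorial heart) that $\rho_\ell > 0$ is achievable — i.e. there is a tuple avoiding $U_0$ for all small words — exactly when $\ell\le m$, using the $K_1$ computation to see that in $C^*_r(\bb F_m)$ one cannot embed $\bb Z^{m+1}$ into $U/U_0$ while in $C^*_r(\bb F_{m+1})$ one can (via the generating unitaries). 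Then $\rho_{m+1}^{C^*_r(\bb F_m)}=0$ but $\rho_{m+1}^{C^*_r(\bb F_{m+1})}>0$ for a suitable finite word set, contradicting $C^*_r(\bb F_m)\equiv C^*_r(\bb F_{m+1})$.

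The main obstacle I anticipate is the gap between $U/U_0$ and $K_1$: independence of unitaries modulo $U_0$ is what is directly definable, but the Pimsner--Voiculescu theorem computes $K_1$, and one must argue that rank-$(m+1)$ independence in $U(C^*_r(\bb F_m))/U_0$ is impossible — equivalently, that the commutator subgroup does not ``create room.'' For C*-algebras of stable rank one (which $C^*_r(\bb F_m)$ is not — it has stable rank one only for... actually $C^*_r(\bb F_m)$ does have stable rank one by a result of Dykema--Haagerup--Rørdam), one has $U(A)/U_0(A)\cong K_1(A)$ exactly, which would make the argument clean: then $U/U_0\cong\bb Z^m$ and the combinatorics are immediate. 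So a secondary step is to invoke stable rank one of $C^*_r(\bb F_m)$ to identify $U/U_0$ with $\bb Z^m$, after which the sentence $\rho_{m+1}$ does the job and finitely many words suffice because one only needs to detect failure of torsion-freeness/independence, which is visible on bounded exponent vectors once the ambient group is known to be $\bb Z^m$.
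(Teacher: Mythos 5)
You have the right ingredients, and they are the same ones the paper uses: Pimsner--Voiculescu for $K_1(C^*_r(\bb F_m))\cong \bb Z^m$, stable rank one of $C^*_r(\bb F_m)$ (Dykema--Haagerup--R\o rdam), and the resulting identification $U(A)/U_0(A)\cong K_1(A)$ (Rieffel). The gap is in the transfer step, where definability of $U_0$ is supposed to yield a first-order distinction. The rank of $U/U_0\cong\bb Z^m$, detected through $\bb Z$-independence of unitaries modulo $U_0$, is not expressible by sentences of the kind you propose. Concretely, fix any finite set $W$ of words with nonzero exponent vectors and consider $\rho_{m+1}=\sup_{\vec u}\inf_{w\in W}\operatorname{dist}(w(\vec u),U_0)$. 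In $C^*_r(\bb F_m)$ choose unitaries whose classes in $U/U_0\cong\bb Z^m$ are $Ne_1,(N+1)e_1,e_2,\dots,e_m$ with $N$ larger than every exponent occurring in $W$: a relation $k_1N+k_2(N+1)=0$ with $|k_1|,|k_2|<N$ forces $k_1=k_2=0$, so no $w\in W$ lands in $U_0$, and since $U_0$ is clopen in the unitary group each such $w(\vec u)$ has distance $2$ from $U_0$. Hence $\rho_{m+1}$ takes the value $2$ on $C^*_r(\bb F_m)$ as well as on $C^*_r(\bb F_{m+1})$; dependence of $m+1$ elements of $\bb Z^m$ is simply not witnessed by bounded exponent vectors, contrary to your final claim. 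The same computation breaks the compactness variant: every finite fragment of your type is realized in $C^*_r(\bb F_m)$ for every $\ell$, so the individual sentences fail to distinguish, and whether the full type is realized in the algebra itself is not an elementary invariant (elementarily equivalent structures routinely differ on which types they realize).

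The repair is the paper's route: uniform definability of $U_0$ means the discrete group $U/U_0\cong K_1(C^*_r(\bb F_m))\cong\bb Z^m$ is interpretable in $C^*_r(\bb F_m)$, uniformly in $m$, so $C^*_r(\bb F_m)\equiv C^*_r(\bb F_n)$ would force $\bb Z^m\equiv\bb Z^n$ as groups, which is false. Note, however, that $\bb Z^m\not\equiv\bb Z^n$ is itself witnessed by Szmielew-type invariants such as the index $[\bb Z^m:2\bb Z^m]=2^m$, i.e.\ by the existence of $2^m$ unitaries pairwise inequivalent modulo squares and $U_0$ (a condition that is crisply expressible because $U_0$ is definable and clopen), and not by rank or independence. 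If you prefer to avoid the formalism of interpretations, replacing your independence sentences with these mod-$p$ counting sentences yields a correct sentence-by-sentence argument.
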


\begin{proof}
By a result of Rieffel \cite{Rieffel1983}, if $A$ has stable rank $1$, then $K_1(A)\cong U(A)/U_0(A)$.  By a result of Dykema, Haagerup, and Rordam \cite{DHR-StableRank1997}, $C^*_r(\bb F_m)$ has stable rank $1$ for all $m\geq 2$.  Thus, the assumptions of the lemma would imply that $K_1(C^*_r(\bb F_m))\cong \bb Z_m$ is interpretable in $C^*_r(\bb F_m)$, uniformly in $m$.  Since $\bb Z_m\not\equiv \bb Z_n$ (as groups) for distinct $m,n\geq 2$, the result follows.
\end{proof}

\begin{remark}
    Recall that every element of $U_0(A)$ can be written as a product of elements of the form $\exp(ih)$ for $h$ a self-adjoint element of $A$.  A well-known question in the C*-algebra literature \cite[Problem 2.10]{Phillips1994}  asks if there is a bound on the \textbf{exponential rank} of $C^*_r(\bb F_m)$, that is, whether there is a bound on the number of elements in such a product for an element of $U_0(C^*_r(\bb F_m))$.  As pointed out to us in private communication by Leonel Robert, the finiteness of the exponential rank of $C^*_r(\bb F_m)$ is equivalent to the finiteness of the \textbf{exponential length} of $C^*_r(\bb F_m)$.  (See \cite{Ringrose92} for the definition of exponential length.  In general, finite exponential length implies finite exponential rank.)  As pointed out in \cite[Subsection 3.12]{MTofCstar}, if the exponential length of $A$ is finite, then $U_0(A)$ is definable.  Consequently, if there is a bound on the exponential rank/length of $C^*_r(\bb F_m)$, uniformly in $m$, then Question \ref{C*question} would have a negative answer.
\end{remark}

\vspace{0.7cm}
Goldbring was partially supported by NSF grant DMS-2054477.

\printbibliography

\end{document}